\theoremstyle{definition}
\newtheorem{theorem}{Theorem}[section]
\newtheorem{proposition}[theorem]{Proposition}
\newtheorem{lemma}[theorem]{Lemma}
\newtheorem{corollary}[theorem]{Corollary}
\newtheorem{assumption}[theorem]{Assumption}
\newtheorem{counterexample}[theorem]{Counterexample}
\theoremstyle{definition}
\newtheorem{example}[theorem]{Example}
\theoremstyle{remark}
\newtheorem{remark}[theorem]{Remark}
\DeclareMathOperator{\trace}{trace}
\DeclareMathOperator{\Dkl}{D_\mathrm{KL}}
\DeclareMathOperator{\Ker}{Ker}
\DeclareMathOperator{\rank}{rank}
\newcommand{\Ex}{\mathbb{E}}
\newcommand{\nut}{\tilde{\nu}}
\newcommand{\nulis}{\nu^{\scriptscriptstyle \text{LIS}}}
\newcommand{\nuas}{\nu^{\scriptscriptstyle \text{AS}}}
\newcommand{\Hlis}{H_{\scriptstyle \text{LIS}}}
\newcommand{\Has}{H_{\scriptstyle \text{AS}}}
\providecommand{\keywords}[1]
{
  \small	
  \textbf{Keywords } #1
}
\begin{document}

\title{Certified dimension reduction in nonlinear Bayesian inverse problems}

\author{\sc 
Olivier Zahm\footnote{Univ. Grenoble Alpes, Inria, CNRS, Grenoble INP, LJK, 38000 Grenoble, France (Corresponding author \nolinkurl{olivier.zahm@inria.fr})}, 
Tiangang Cui\footnote{School of Mathematical Sciences, Monash University, Victoria 3800, Australia.}, 
Kody Law\footnote{School of Mathematics, University of Manchester, Manchester, M13 9PL, UK.}, 
Alessio Spantini\footnote{Department of Aeronautics and Astronautics, MIT, Cambridge, MA 02139, USA.}, 
Youssef Marzouk$^\text{\textsection}$
}

\maketitle

\begin{abstract}
 We propose a dimension reduction technique for Bayesian inverse problems with nonlinear forward operators, non-Gaussian priors, and non-Gaussian observation noise. The likelihood function is approximated by a ridge function, \text{i.e.,} a map which depends non-trivially only on a few linear combinations of the parameters. We build this ridge approximation by minimizing an upper bound on the Kullback--Leibler divergence between the posterior distribution and its approximation. This bound, obtained via logarithmic Sobolev inequalities, allows one to certify the error of the posterior approximation. Computing the bound requires computing the second moment matrix of the gradient of the log-likelihood function. In practice, a sample-based approximation of the upper bound is then required. We provide an analysis that enables control of the posterior approximation error due to this sampling. Numerical and theoretical comparisons with existing methods illustrate the benefits of the proposed methodology.
\end{abstract}

\keywords{
dimension reduction, 
nonlinear Bayesian inverse problem,
logarithmic Sobolev inequality,
certified error bound,
non-asymptotic analysis.
}

\section{Introduction}

Solving Bayesian inverse problems \cite{kaipio2006statistical,stuart2010inverse} is a challenging task in many domains of application, due to the complexity of the posterior distribution. One of the primary sources of complexity is the dimension of the parameters to be inferred, which is often high or in principle infinite---e.g., when the posterior is a distribution over functions or their finite-dimensional discretization. High dimensionality presents difficulties for posterior sampling: care is required to design sampling algorithms that mix effectively while remaining robust under refinement of the discretization. High dimensionality also raises significant hurdles to the use of model reduction or approximation schemes (e.g., \cite{brennan2020greedy,conrad2016accelerating,cui2021conditional,cui2015data,manzoni2016accurate,rubio2018fast}) that attempt to reduce the cost of likelihood or forward model evaluations.

Successful strategies for high-dimensional Bayesian inverse problems typically exploit the presence of some low-dimensional structure. A common structure in inverse problems is that the posterior is a low-dimensional \emph{update} of the prior, in the sense that change from prior to posterior is most prominent on a low-dimensional subspace of the parameter space. 
Put another way, the likelihood is influential, relative to the prior, only on a low-dimensional subspace (we will make these intuitive notions more precise later). Sources of such structure include the smoothing properties of the forward operator and limitations on the number or accuracy of the observations.
In the linear--Gaussian case, this structure is already well understood. For instance, low-rank approximations of the (prior-preconditioned) Hessian of the log-likelihood have been used in \cite{flath2011fast} to approximate the posterior covariance. In \cite{spantini2015optimal}, this approach is shown to yield \emph{optimal} approximations of the posterior covariance and of the posterior mean. A heuristic extension of this approach to nonlinear forward models, known as the \emph{likelihood-informed subspace} (LIS) method, is proposed in \cite{cui2014likelihood} and shown to perform well in many applications \cite{beskos2018multilevel,cui2021data,lamminpaa2019likelihood}. A similar idea underlies the \emph{active subspace} method applied to Bayesian problems \cite{constantine2016accelerating}. Identifying the subspace on which changes from the prior to posterior are most prominent also provides a foundation for infinite-dimensional MCMC algorithms that ``split'' the parameter space, such as the \emph{dimension-independent likelihood-informed} (DILI) MCMC samplers of \cite{cui2016dimension}.
In \cite{cui2016scalable}, LIS-based dimension reduction is shown to be an effective prelude to model order reduction, where the forward model is replaced with a computationally inexpensive approximation. Yet none of these dimension reduction approaches, outside of the linear--Gaussian setting of \cite{spantini2015optimal}, comes with a rigorous error analysis. The subject of this paper is a new dimension reduction method for the nonlinear\slash non-Gaussian setting that is, in contrast, \emph{certified}---in the sense that the error induced by the approximation is endowed with a computable upper bound.

Other forms of dimension reduction for Bayesian inverse problems have also been proposed, besides the ``update'' form of dimension reduction described above. For instance, in \cite{li2006efficient,marzouk2009dimensionality}, a truncated Karhunen-Lo\`eve decomposition of the prior distribution is used to reduce the parameter dimension of the entire inverse problem. This approach exploits only the low-dimensional structure of the prior distribution, however, and does not take advantage of structure in the likelihood function or forward model. In \cite{lieberman2010parameter}, a greedy algorithm is used to identify a parameter subspace capable of reproducing the forward model. This approach, in contrast, does not take advantage of the prior correlation structure, and moreover can remove directions that are uninformed by the likelihood but that still retain large variation under the true posterior. More recent results on the intrinsic dimension  of linear Bayesian inverse problems \cite{agapiou2017importance} reinforce the idea that the \emph{update} from prior to posterior should instead be a central object, e.g., when characterizing the performance of importance sampling schemes.

\subsection{Contribution}
In this paper we propose a methodology to detect and exploit the low-dimensional structure of the update from prior to posterior. Our approach addresses Bayesian inverse problems with nonlinear forward operators, non-Gaussian priors, and non-Gaussian observation noise: the posterior $\nu$ is a measure on $\mathbb{R}^d$ given by $\text{d}\nu\propto f\text{d}\mu$ where $\mu$ is the prior measure and $f$ is any unnormalized likelihood function defined on $\mathrm{supp}(\mu)$.
Our approach consists in approximating $x\mapsto f(x)$ by a \emph{ridge function}, i.e., a function of the form $x\mapsto h(Ax)$ which depends non-trivially only on $r\ll d$ linear combinations of the parameters $Ax$, with $A\in\mathbb{R}^{r\times d}$. 

More precisely, we seek a \emph{controlled approximation} such that the Kullback--Leibler (KL) divergence $\Dkl( \nu || \nu_r )$ from the resulting posterior approximation $\text{d}\nu_r(x) \propto h(Ax)\text{d}\mu(x)$ to the posterior $\nu$ is below some user-defined threshold. 
As our main contribution, we derive in Theorem \ref{prop:subspaceLogSob} and Corollary \ref{cor:KL_bound} an upper bound on the KL divergence. Minimizing this bound permits us to construct the ridge approximation so that the bound falls below the threshold.

This bound is obtained via \emph{logarithmic Sobolev inequalities}, an important class of inequalities in measure theory \cite{gross1975logarithmic,guionnet2003lectures,otto2000generalization} with many implications on concentration of measure phenomena \cite{boucheron2013concentration,ledoux1999concentration}. 
Using logarithmic Sobolev inequalities requires some assumptions on the prior distribution and on the regularity of the likelihood function. In particular, we need the gradient of the log-likelihood to be square-integrable over the posterior distribution.
A similar methodology has been proposed in \cite{zahm2020gradient} to reduce the input dimension of multivariate functions. In that paper, a bound on the function approximation error in $L^2$ norm is obtained via Poincar\'e inequalities, another class of Sobolev inequality.

In the proposed dimension reduction method, the informed subspace is constructed as the dominant eigenspace of the matrix
$$
  H = \int (\nabla\log f)(\nabla\log f)^T  \mathrm{d}\nu.
$$
Once the informed subspace is identified, the posterior approximation is constructed by computing a conditional expectation of the likelihood function.
We prove that the resulting divergence $\Dkl( \nu || \nu_r )$ is bounded by the sum of the $d-r$ smallest eigenvalues of $H$: a quickly decaying spectrum in $H$ thus reveals the low-dimensional structure in the posterior.

The proposed method requires computing (i) the matrix $H$ and (ii) a conditional expectation of the likelihood function. 
In practice, we approximate both quantities with Monte Carlo  estimates, requiring samples from the posterior for (i) and samples from the prior for (ii). 
Provided the sample sizes are sufficiently large, we prove that the resulting random posterior approximation is a quasi-optimal approximation compared to the theoretical posterior approximation obtained by exact integration.
This quasi-optimality result is given in expectation for (ii) and in high probability for (i).  In particular, we show that the number of posterior samples to approximate (i) should scale in proportion to the \textit{rank} of the matrix to be estimated, which can be much smaller than the ambient dimension.
Finally, even if the method only requires a limited number of posterior samples, it can be difficult to obtain the posterior samples in (i). We thus propose several alternatives for computing (i) that do not require sampling from the exact posterior. These alternatives include an iterative procedure that builds a sequence of low-dimensional posterior approximations in order to obtain an accurate final estimate.

The outline of the paper is as follows. 
In Section \ref{sec:2} we introduce the theoretical tools to derive and to minimize the error bound. We demonstrate the benefits of this method on an analytical example.
In Section \ref{sec:3} we propose algorithms for the numerical construction of the low-dimensional posterior approximation.
We give a theoretical analysis of the convergence of these sample-based estimators.
In order to provide some context for our developments, we show in Section \ref{sec:4} how the proposed methodology compares with existing dimension reduction methods, including a comparison with a Karhunen-Lo\`eve decomposition.
Finally, Section \ref{sec:numerics} illustrates the benefits of our approach on two numerical examples.

\section{Dimension reduction for the approximation of high-dimensional distributions}\label{sec:2}

Let $\mu$ be a probability distribution defined over the Borel sets $\mathcal{B}(\mathbb{R}^d)$ of $\mathbb{R}^d$. Given a measurable function $f:\mathrm{supp}(\mu)\rightarrow\mathbb{R}^+$ such that $\int f ~\mathrm{d}\mu < \infty$, let $\nu$ be the probability distribution such that
$$
 \frac{\mathrm{d}\nu}{\mathrm{d}\mu} \propto f.
$$
In the context of Bayesian inverse problems, one can view $\mu$ as the \emph{prior distribution} and $\nu$ as the \emph{posterior distribution}, while $f$ represents, up to a multiplicative constant, the \emph{likelihood function}. We consider the problem of approximating the posterior $\nu$ by a probability distribution $\nu_r$ such that
\begin{equation}\label{eq:nuTilde}
 \frac{\mathrm{d}\nu_r}{\mathrm{d}\mu} \propto g \circ P_r ,
\end{equation}
where $P_r:\mathbb{R}^d\rightarrow\mathbb{R}^d$ is a linear projector with rank $r$ and $g:\text{Im}(P_r)\rightarrow\mathbb{R}^+$ is a Borel function called the \emph{profile function}. 
Throughout this paper we identify the projector with its matrix representation $P_r\in\mathbb{R}^{d\times d}$.
Notice that $P_r$ is not restricted to be orthogonal: it can be any matrix $P_r\in\mathbb{R}^{d\times d}$ which satisfies $P_r^2=P_r$ and $\text{rank}(P_r) = r$, but not necessarily $P_r^T=P_r$. 
Any vector $x\in \mathbb{R}^d$ can be uniquely decomposed as
$$
 x = x_r + x_\perp \quad\text{ with }
 \left\{\begin{array}{l}
  x_r = P_r x  \\
  x_\perp = (I_d-P_r)x 
 \end{array}
 \right.
$$
where $I_d\in\mathbb{R}^{d\times d}$ denotes the identity matrix. If $r \ll d$, the approximation of $\nu$ by $\nu_r$ consists essentially in replacing the high-dimensional likelihood $f$ by a function of fewer variables. Indeed, $x\mapsto g (P_r x) = g(x_r)$ depends only on the variable $x_r\in\text{Im}(P_r) \cong \mathbb{R}^r$ and is constant along $\text{Ker}(P_r)\cong \mathbb{R}^{d-r}$.  Yet $\nu_r$ cannot itself  be considered a low-dimensional distribution, since its support could be the same as that of $\nu$. 

We use the Kullback--Leibler divergence $\Dkl( \, \cdot \, || \, \cdot \, )$ to measure the dissimilarity between probability distributions. It is defined as
\begin{equation}\label{eq:defKL}
 \Dkl( \nu_1 || \nu_2 ) = \int \log \left( \frac{\mathrm{d}\nu_1}{\mathrm{d}\nu_2} \right)  \mathrm{d}\nu_1 ,
\end{equation}
for any probability distributions $\nu_1$ and $\nu_2$ such that $\nu_1 $ is absolutely continuous with respect to $ \nu_2 $, and $\Dkl( \nu_1 || \nu_2 )=\infty$ otherwise. 
Given a prescribed tolerance $\varepsilon\geq 0$, our goal is to build an approximation $\nu_r$ of $\nu$ of the form \eqref{eq:nuTilde} such that
\begin{equation}\label{eq:epsilon_proximality}
 \Dkl( \nu || \nu_r ) \leq \varepsilon .
\end{equation}
Of course if $P_r$ is the identity matrix and $g=f$, the distribution $\nu_r$ is exactly the posterior $\nu$, so that \eqref{eq:epsilon_proximality} is trivially satisfied. In that case, the rank of $P_r=I_d$ is $d$ and there is no dimension reduction. The goal of this section is to give sufficient conditions under which we can build an approximation $\nu_r$ such that \eqref{eq:epsilon_proximality} holds with a rank $r=r(\varepsilon)$ that is significantly smaller than $d$.

\begin{remark}
 Instead of approximating $\nu$ by $\nu_r$ as given by \eqref{eq:nuTilde}, we could have considered an approximation $\widetilde\nu_r$ of the form
 $$
  \frac{\mathrm{d}\widetilde\nu_r}{\mathrm{d}\mu} \propto h \circ A_r,
 $$
 with $h:\mathbb{R}^r\rightarrow \mathbb{R}^+$ a Borel function and $A_r\in\mathbb{R}^{r\times d}$ a matrix of full row rank. Functions of the form $x\mapsto h(A_rx)$ are a particular kind of \emph{ridge function} \cite{pinkus2015ridge}. They are used in various domains to approximate multivariate functions; see \cite{cohen2012capturing,fornasier2012learning,tyagi2014learning}, for example. However, any function of the form $h\circ A_r$ can be written as $g\circ P_r$ and vice-versa, where $g:\text{Im}(P_r)\rightarrow\mathbb{R}^+$ is some Borel function and $P_r$ is some rank-$r$ projector. More precisely, for any $r\leq d$ we have
 \begin{align*}\label{eq:equivalenceArPr}
  &\left\{ 
  h \circ A_r 
  \Big|
  \begin{array}{l}
  h:\mathbb{R}^r \rightarrow \mathbb{R}^+, \text{Borel function}\\
  A_r \in\mathbb{R}^{r\times d}, \text{rank-$r$ matrix}
  \end{array}
  \right\} \\
  =&
  \left\{ 
  g \circ P_r 
  \Big|
  \begin{array}{l}
  g:\text{Im}(P_r) \rightarrow \mathbb{R}^+ , \text{Borel function}\\
  P_r \in\mathbb{R}^{d\times d}, \text{rank-$r$ projector}
  \end{array}
  \right\}.
 \end{align*}
 This means that approximating $\nu$ by $\nu_r$ or by $\widetilde\nu_r$ are essentially the same problem: we do not gain approximation power in using $\widetilde\nu_r$ instead of $\nu_r$.

\end{remark}

\subsection{Optimal profile function}\label{sec:2.1}

To begin our analysis, we assume that the projector $P_r$ is given (fixed); we will address the problem of constructing $P_r$ later in Section~\ref{sec:ControlledApprox}.
In this subsection, we characterize the \emph{optimal} profile function $g^*$ for a given $P_r$, defined as a minimizer of $g\mapsto \Dkl( \nu || \nu_r )$ over the set of positive and measurable functions. This function $g^*$ yields an optimal probability measure $\nu_r^*$ defined as $\frac{\mathrm{d}\nu_r^*}{\mathrm{d}\mu}\propto g^*\circ P_r$. We will also show that, for any probability measure $\nu_r$ with $\frac{\mathrm{d}\nu_r}{\mathrm{d}\mu}\propto g\circ P_r$, the Kullback-Leibler divergence can then be decomposed as $\Dkl( \nu || \nu_r ) = \Dkl( \nu || \nu_r^* ) + \Dkl( \nu_r^* || \nu_r )$.

Let us denote by $\sigma(P_r)$ the $\sigma$-algebra generated by $P_r$. It is defined by
$
 \sigma(P_r) = \{ P_r^{-1} B ~|~ B \in \mathcal{B}(\mathbb{R}^d) \},
$
where $P_r^{-1} B = \{ x\in\mathbb{R}^d : P_r x \in B \}$ denotes the pre-image of $B\in\mathcal{B}(\mathbb{R}^d)$ under $P_r$.
The following lemma corresponds to the Doob--Dynkin lemma; see, for example, Lemma 1.13 in \cite{kallenberg1997foundations}. It states that, for any projector $P_r$, the set of all functions of the form $g\circ P_r$ for some measurable function $g$ is exactly the set of all $\sigma(P_r)$-measurable functions.

\begin{lemma}\label{cor:Doob}
 Let $P_r\in\mathbb{R}^{d\times d}$ be a projector. Given a measurable function $g$ defined on $\mathbb{R}^d$, the function $h = g \circ P_r $ is $\sigma(P_r)$-measurable. Conversely, given a $\sigma(P_r)$-measurable function $h$, there exists a measurable function $g$ defined on $\mathbb{R}^d$ such that $h = g \circ P_r $.
\end{lemma}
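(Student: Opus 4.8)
The plan is to handle the two implications separately. The forward direction is essentially a restatement of the definition of $\sigma(P_r)$, so I would argue it directly at the level of preimages: if $g$ is measurable and $C$ is a Borel subset of the codomain, then $(g\circ P_r)^{-1}(C) = P_r^{-1}\big(g^{-1}(C)\big)$, and since $g^{-1}(C)\in\mathcal{B}(\mathbb{R}^d)$ by measurability of $g$, this set lies in $\sigma(P_r)$ by the very definition $\sigma(P_r)=\{P_r^{-1}B : B\in\mathcal{B}(\mathbb{R}^d)\}$. Hence $h=g\circ P_r$ is $\sigma(P_r)$-measurable.

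The converse is the substantive part, and I would establish it by the standard bootstrapping from indicators to general measurable functions. First, for an indicator $h=\mathbf{1}_A$ with $A\in\sigma(P_r)$: writing $A = P_r^{-1}B$ for some $B\in\mathcal{B}(\mathbb{R}^d)$ gives $\mathbf{1}_A = \mathbf{1}_B\circ P_r$, so $g=\mathbf{1}_B$ works. Second, by linearity this extends to simple $\sigma(P_r)$-measurable functions. Third, for nonnegative $\sigma(P_r)$-measurable $h$, I would take an increasing sequence of simple functions $h_n\uparrow h$, write each as $h_n = g_n\circ P_r$ using the previous step, and pass to the limit. Finally, a general $h$ is treated via $h=h^+-h^-$.

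The only point requiring genuine care is the limit step, and this is where I expect the single obstacle. The pointwise limit of $g_n$ need not exist off the range $\mathrm{Im}(P_r)$, so one cannot simply set $g=\lim_n g_n$ on all of $\mathbb{R}^d$. The remedy is to introduce the Borel set $E=\{y\in\mathbb{R}^d : (g_n(y))_n\text{ converges}\}$ and define $g=\lim_n g_n$ on $E$ and $g=0$ on $\mathbb{R}^d\setminus E$, which yields a Borel function $g$. Because $P_r x\in\mathrm{Im}(P_r)$ and $g_n(P_r x)=h_n(x)\to h(x)$ converges for every $x\in\mathbb{R}^d$, we have $\mathrm{Im}(P_r)\subseteq E$, and therefore $g(P_r x)=\lim_n g_n(P_r x)=h(x)$ for all $x$, i.e.\ $h=g\circ P_r$. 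As a shortcut, one could instead simply observe that $\sigma(P_r)$ coincides with the $\sigma$-algebra generated by the map $P_r:\mathbb{R}^d\to\mathbb{R}^d$ and invoke the cited Doob--Dynkin lemma (Lemma 1.13 of \cite{Mikosch1998}) directly.
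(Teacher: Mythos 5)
Your proof is correct. Note, however, that the paper does not actually prove this lemma: it simply identifies it as an instance of the Doob--Dynkin lemma and cites Lemma 1.13 of \cite{Mikosch1998}, which is exactly the ``shortcut'' you mention in your last sentence. What you have written is a self-contained proof of that cited result, by the standard bootstrapping (indicators, then simple functions, then monotone limits of nonnegative functions, then $h=h^+-h^-$), and you correctly isolate the one delicate point: the approximating functions $g_n$ need not converge off $\mathrm{Im}(P_r)$, so one must restrict the limit to the Borel set $E$ where $(g_n)$ converges and set $g=0$ elsewhere; since $g_n(P_rx)=h_n(x)\to h(x)$ for every $x$, the image of $P_r$ lies in $E$ and the identity $h=g\circ P_r$ holds everywhere. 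The forward direction via $(g\circ P_r)^{-1}(C)=P_r^{-1}\bigl(g^{-1}(C)\bigr)$ is likewise exactly right. So the only difference from the paper is one of economy: the paper outsources the argument to a textbook reference, while you supply the (entirely standard, and correct) proof; nothing in your argument uses any special structure of $P_r$ beyond its being a Borel map, which is consistent with the generality of the cited lemma.
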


We denote by $\mathbb{E}_\mu(f|\sigma(P_r)): \mathbb{R}^d \rightarrow\mathbb{R}$ the conditional expectation of $f$ given $\sigma(P_r)$ under the distribution $\mu$. It is the unique $\sigma(P_r)$-measurable function that satisfies
\begin{equation}\label{eq:CondExp_VarForm}
 \int \mathbb{E}_\mu(f|\sigma(P_r)) \, h \,\mathrm{d}\mu
 = 
 \int f \, h \,\mathrm{d}\mu ,
\end{equation}
for any $\sigma(P_r)$-measurable function $h:\mathbb{R}^d\rightarrow \mathbb{R}$. We consider the probability distribution $\nu_r^*$ defined by
\begin{equation*}\label{eq:NuStar}
 \frac{\mathrm{d}\nu_r^*}{\mathrm{d}\mu} \propto \mathbb{E}_\mu(f|\sigma(P_r)).
\end{equation*}
This distribution is well defined because $\mathbb{E}_\mu(f|\sigma(P_r))$ is a positive function (the conditional expectation preserves the sign) and because $\int \mathbb{E}_\mu(f|\sigma(P_r)) \,\mathrm{d}\mu = \int f\, \mathrm{d}\mu <\infty$ (by letting $h=1$ in \eqref{eq:CondExp_VarForm}). By the definition of the Kullback--Leibler divergence \eqref{eq:defKL}, we can write
\begin{align}
 \Dkl(\nu || \nu_r ) - \Dkl(\nu || \nu_r^* ) 
  &= \int \frac{f}{Z_1} \log \frac{f/Z_1}{g\circ P_r / Z_2} ~\mathrm{d}\mu - \int \frac{f}{Z_1} \log \frac{f/Z_1 }{\mathbb{E}_\mu(f|\sigma(P_r)) / Z_1} ~\mathrm{d}\mu  \nonumber\\
  &= \int \frac{f}{Z_1} \log \frac{\mathbb{E}_\mu(f|\sigma(P_r))/Z_1}{g\circ P_r/Z_2} ~\mathrm{d}\mu  \label{eq:tmp2863}\\
  &= \int \frac{\mathbb{E}_\mu(f|\sigma(P_r))}{Z_1} \log \frac{\mathbb{E}_\mu(f|\sigma(P_r))/Z_1}{g\circ P_r/Z_2} ~\mathrm{d}\mu \label{eq:tmp2866}\\
  &= \Dkl(\nu_r^* || \nu_r ), \label{eq:pythagoras}
\end{align}
where $Z_1$ and $Z_2$ are two normalizing constants defined by $Z_1 = \int f ~\mathrm{d}\mu = \int \mathbb{E}_\mu(f|\sigma(P_r)) \mathrm{d}\mu$ and $Z_2 = \int g\circ P_r ~\mathrm{d}\mu$. 
To go from \eqref{eq:tmp2863} to \eqref{eq:tmp2866}, we used relation \eqref{eq:CondExp_VarForm} with $ h = \log \frac{(Z_1)^{-1}\mathbb{E}_\mu(f|\sigma(P_r))}{(Z_2)^{-1}g\circ P_r} $, which is a $\sigma(P_r)$-measurable function.
By \eqref{eq:pythagoras} we have
$$
 \Dkl(\nu || \nu_r ) \geq \Dkl(\nu || \nu_r^* ) ,
 \quad \text{with} \quad 
 \left\{
 \begin{array}{l}
  \frac{\mathrm{d} \nu_r}{\mathrm{d}\mu} \propto g \circ P_r ,\\
  \frac{\mathrm{d}\nu_r^*}{\mathrm{d}\mu} \propto \mathbb{E}_\mu(f|\sigma(P_r)) ,
 \end{array}
 \right.
$$
for any measurable function $g$. From this inequality we deduce that any function $g^*$ which satisfies $g^* \circ P_r = \mathbb{E}_\mu(f|\sigma(P_r))$ is a minimizer of $g\mapsto \Dkl(\nu || \nu_r )$. By Lemma \ref{cor:Doob} such a function $g^*$ exists because $\mathbb{E}_\mu(f|\sigma(P_r))$ is a $\sigma(P_r)$-measurable function.

\begin{remark}
The conditional expectation $\mathbb{E}_\mu(f|\sigma(P_r))$ is known to be the best approximation of a function $f$ with respect to the $L^2_\mu$-norm, meaning that it minimizes $h\mapsto \int (f-h)^2\mathrm{d}\mu$ among all $\sigma(P_r)$-measurable functions $h$. Here we showed that it is also optimal with respect to the Kullback--Leibler divergence. As pointed out in \cite{banerjee2005optimality}, $\mathbb{E}_\mu(f|\sigma(P_r))$ is also an optimal approximation of $f$ with respect to the class of \textit{expected Bregman divergences}, which includes the Kullback--Leibler divergence and the $L^2_\mu$-norm distance.
\end{remark}

The following proposition recalls the well-known analytical expression for the conditional expectation. For the sake of completeness, the proof is given in Section \ref{proof:ExplicitCondExp}.

\begin{proposition}[Explicit expression for the conditional expectation]\label{prop:ExplicitCondExp}
 Let $\mu$ be a probability distribution on $\mathbb{R}^d$ which admits a Lebesgue density $\rho$. Given a rank-$r$ projector $P_r$, we denote by $U_\perp\in\mathbb{R}^{d\times (d-r)}$ a matrix whose columns form a basis of $\Ker(P_r)$. Let $p_\perp( \, \cdot \, | P_r x )$ be the conditional probability density on $\mathbb{R}^{d-r}$ defined by
 \begin{equation}\label{eq:defCondDensity}
  p_\perp(\xi_\perp| P_r x ) 
 = \frac{\rho( P_r x + U_\perp \xi_\perp  )}{\int_{\mathbb{R}^{d-r}} \rho( P_r x + U_\perp \xi_\perp'  ) \mathrm{d}\xi_\perp'},
 \end{equation}
 for all $\xi_\perp\in\mathbb{R}^{d-r}$ and any $x\in\mathbb{R}^d$, with the convention $p_\perp(\xi_\perp| P_r x ) =0$ whenever the denominator of \eqref{eq:defCondDensity} is zero.
 Then, for any measurable function $f$, the conditional expectation $\mathbb{E}_\mu(f|\sigma(P_r))$ can be written as
 \begin{equation}\label{eq:CondExpExplicit}
   \mathbb{E}_\mu(f|\sigma(P_r)) : x \mapsto \int_{\mathbb{R}^{d-r}} f(P_r x + U_\perp \xi_\perp) ~p_\perp(\xi_\perp| P_r x ) \, \mathrm{d}\xi_\perp.
 \end{equation}
\end{proposition}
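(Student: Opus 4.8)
The plan is to verify that the right-hand side of \eqref{eq:CondExpExplicit}, which I will denote by $g(x)$, satisfies the two defining properties of the conditional expectation recalled in \eqref{eq:CondExp_VarForm}: that it is $\sigma(P_r)$-measurable, and that it obeys the variational identity $\int g\, h\,\mathrm{d}\mu = \int f\, h\,\mathrm{d}\mu$ for every $\sigma(P_r)$-measurable $h$. Uniqueness of the conditional expectation then yields $g = \mathbb{E}_\mu(f|\sigma(P_r))$.

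The first step is to introduce coordinates adapted to the (possibly oblique) projector $P_r$. Since $P_r$ is idempotent, $\mathbb{R}^d = \mathrm{Im}(P_r)\oplus\Ker(P_r)$; choosing a matrix $U_r\in\mathbb{R}^{d\times r}$ whose columns span $\mathrm{Im}(P_r)$, the matrix $M = [\,U_r \ \ U_\perp\,]\in\mathbb{R}^{d\times d}$ is invertible. Writing $x = M(\xi_r,\xi_\perp)^T = U_r\xi_r + U_\perp\xi_\perp$ defines a linear change of variables with $\mathrm{d}x = |\det M|\,\mathrm{d}\xi_r\,\mathrm{d}\xi_\perp$. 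Because $P_r U_r = U_r$ and $P_r U_\perp = 0$, one gets $P_r x = U_r\xi_r$, so that $P_r x + U_\perp\xi_\perp = x$, and both $g$ and the denominator of \eqref{eq:defCondDensity} depend on $x$ only through $\xi_r$, i.e.\ only through $P_r x$. By Lemma~\ref{cor:Doob} this immediately establishes the $\sigma(P_r)$-measurability of $g$.

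For the variational identity I would take an arbitrary $\sigma(P_r)$-measurable $h$, which by Lemma~\ref{cor:Doob} is a function of $P_r x = U_r\xi_r$ alone, and express both $\int f h\,\mathrm{d}\mu$ and $\int g h\,\mathrm{d}\mu$ in the $(\xi_r,\xi_\perp)$ coordinates using $\mathrm{d}\mu = \rho\,\mathrm{d}x$. Writing $Z(\xi_r) = \int_{\mathbb{R}^{d-r}}\rho(U_r\xi_r + U_\perp\xi_\perp)\,\mathrm{d}\xi_\perp$ for the denominator of \eqref{eq:defCondDensity}, the definition of $g$ gives, on each fiber, $g\, Z(\xi_r) = \int f\,\rho\,\mathrm{d}\xi_\perp$. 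Substituting this into $\int g h\,\mathrm{d}\mu$ and integrating out $\xi_\perp$, on which $g$ and $h$ do not depend, makes the factor $Z(\xi_r)$ cancel exactly and leaves precisely the coordinate expression of $\int f h\,\mathrm{d}\mu$; the common factor $|\det M|$ drops out as well, so the two integrals coincide. The interchange of the order of integration is justified by Tonelli's theorem when $f\ge 0$ (and by Fubini together with $f\in L^1(\mu)$ in general).

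The delicate point is the behaviour on the set where the fiber mass $Z(\xi_r)$ vanishes, where the convention $p_\perp\equiv 0$ makes $g = 0$. The key observation is that $Z(\xi_r)=0$ forces $\rho(U_r\xi_r + U_\perp\xi_\perp)=0$ for Lebesgue-almost every $\xi_\perp$ (a nonnegative integrand with zero integral), so the integrand $f h\rho$ of $\int f h\,\mathrm{d}\mu$ also vanishes almost everywhere on that fiber. Hence both sides of the variational identity receive no contribution from $\{\xi_r : Z(\xi_r)=0\}$, and the cancellation argument above is valid off this null set. I expect this handling of the degenerate fibers, rather than the change of variables or the algebra, to be the only genuinely subtle part; everything else is the standard disintegration of $\mu$ along the affine fibers $P_r x + \Ker(P_r)$.
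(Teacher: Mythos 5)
Your proposal is correct and follows essentially the same route as the paper's proof: adapted coordinates $x = U_r\xi_r + U_\perp\xi_\perp$, Doob--Dynkin to write any $\sigma(P_r)$-measurable $h$ as a function of $U_r\xi_r$ alone, and cancellation of the fiber normalizing constant inside the variational identity \eqref{eq:CondExp_VarForm}. Your explicit treatment of the fibers where the denominator of \eqref{eq:defCondDensity} vanishes, and the Tonelli/Fubini justification, are welcome refinements that the paper's proof leaves implicit.
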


We conclude this section with a remarkable property of the conditional expectation $\mathbb{E}_\mu(f|\sigma(P_r))$. Let $Q_r$ be any projector whose kernel is the same as that of $P_r$. Then the relations $P_r = P_r \circ Q_r$ and $Q_r = Q_r \circ P_r$ hold; see, for instance, the proof of Proposition 2.2 in \cite{zahm2020gradient}. 
Then Lemma \ref{cor:Doob} ensures that any $\sigma(P_r)$-measurable function is $\sigma(Q_r)$-measurable and vice-versa. In other words, being $\sigma(P_r)$-measurable or $\sigma(Q_r)$-measurable are equivalent such that, by definition of the conditional expectation \eqref{eq:CondExp_VarForm}, we have
\begin{equation*}\label{eq:ImageInvariance}
 \text{Ker}(P_r) = \text{Ker}(Q_r)
 \quad\Rightarrow\quad
 \mathbb{E}_\mu(f|\sigma(P_r)) = \mathbb{E}_\mu(f|\sigma(Q_r)).
\end{equation*}
In other words, the conditional expectation $\mathbb{E}_\mu(f|\sigma(P_r))$ is invariant with respect to the image of $P_r$---so that the error $\Dkl( \nu || \nu_r^* )$, seen as a function of the projector $P_r$, is actually only a function of $\text{Ker}(P_r)$.
In the context of Bayesian inference, this property means that when the optimal profile function $g^*$ is used in \eqref{eq:nuTilde}, the important feature of $P_r$ to be discovered is its kernel, and not its image.  Thus reducing the dimension of a Bayesian inverse problem consists in identifying the subspace $\text{Ker}(P_r)$ on which the data are \textit{not informative}, meaning the subspace where the data do not modify the prior knowledge one has about the parameter.

\subsection{A controlled approximation}\label{sec:ControlledApprox}

In this section we show how to build a projector $P_r$ with a sufficiently large rank so that $\nu_r^*$, defined by $\frac{\mathrm{d}\nu_r^*}{\mathrm{d}\mu} \propto \mathbb{E}_\mu(f|\sigma(P_r))$, satisfies
$
 \Dkl( \nu || \nu_r^* ) \leq \varepsilon
$
for some prescribed tolerance $\varepsilon$. We argue that under some conditions on the likelihood $f$, the rank of $P_r$ can be significantly smaller than $d$. 
\\

We make the following assumption on the prior distribution $\mu$.
Here, the notation $A \succeq B$ means that the matrix $A-B$ is positive semidefinite.
\begin{assumption}\label{assu:convexityOfV}
 The distribution $\mu$ is supported on a convex set $\mathrm{supp}(\mu)\subseteq\mathbb{R}^d$ and admits a Lebesgue density $\rho$ such that $\rho \propto \exp( - V - \Psi)$, where $V$ and $\Psi$ are two functions defined on $\mathrm{supp}(\mu)$ which satisfy the following two properties.
 \begin{itemize}
  \item $V$ is twice continuously differentiable and there exists a symmetric positive definite matrix $\Gamma\in\mathbb{R}^{d\times d}$ such that for all $x\in \mathrm{supp}(\mu)$ we have
  \begin{equation}\label{eq:convexityOfV}
   \nabla^2 V(x) \succeq \Gamma.
  \end{equation}
  
 \item $\Psi$ is a bounded function over $\mathrm{supp}(\mu)$ such that
 \begin{equation}\label{eq:kappa}
  \exp( \sup \Psi - \inf \Psi ) \leq \kappa ,
 \end{equation}
 for some $\kappa \geq 1$.
 \end{itemize}
\end{assumption}

Let us make some comments on this assumption. First consider the case $\kappa=1$, which means that $\Psi$ is a constant function so that $\rho\propto \exp(-V-\Psi)\propto \exp(-V)$. Assumption \eqref{eq:convexityOfV} corresponds to a strong convexity property of the function $V$. Intuitively, it means that $V$ is ``at least quadratically convex.'' Now, when $\kappa > 1$, Assumption \eqref{eq:kappa} means that $c \exp(-V(x)) \leq \rho(x) \leq C \exp(-V(x))$ holds for all $x\in \mathrm{supp}(\mu)$, where $c>0$ and $C<\infty$ are two constants that satisfy $C/c \leq \kappa <\infty$.
We note that if Assumption \ref{assu:convexityOfV} is satisfied, then $\kappa$ and $\Gamma$ are not unique. Indeed, if $\kappa$ and $\Gamma$ satisfy \eqref{eq:convexityOfV} and \eqref{eq:kappa}, then any $\kappa'\geq \kappa$ and symmetric positive definite matrix $\Gamma'\preceq \Gamma$ also satisfy \eqref{eq:convexityOfV} and \eqref{eq:kappa}. 
In the results that follow, tighter bounds will be obtained by having the smallest $\kappa$ and the largest $\Gamma$, but this optimality can be difficult to verify in general.

\begin{example}[Gaussian distribution]\label{Ex:GaussianPrior}
 Any Gaussian prior $\mu =\mathcal{N}( m,\Sigma )$ with mean $m\in\mathbb{R}^d$ and invertible covariance matrix $\Sigma\in\mathbb{R}^{d\times d}$ satisfies Assumption \ref{assu:convexityOfV} with $\mathrm{supp}(\mu)=\mathbb{R}^d$, $\kappa=1$ and $\Gamma = \Sigma^{-1}$. Indeed, the density $\rho$ associated to $\mathcal{N}( m,\Sigma )$ is such that $\rho \propto \exp (- V)$ where the function $V:x\mapsto \frac{1}{2}(x-m)^T\Sigma^{-1}(x-m)$ satisfies $\nabla^2 V(x)=\Gamma$ for all $x\in \mathrm{supp}(\mu)$.
\end{example}

\begin{example}[Gaussian mixture]
 Let
 $
  \mu \propto \sum_{i=1}^n \alpha_i \, \mathcal{N}(m_i,\Sigma_i)
 $
 with $\alpha_1>0$ and $0 \prec \Sigma_1^{-1} \prec \Sigma_i^{-1}$ 
for all $1<i\leq n$, where $A\prec B$ means that $B-A$ is a positive definite matrix. The density $\rho$ of the Gaussian mixture $\mu$ can be written as follows,
 $$
  \rho
  \propto \sum_{i=1}^n \alpha_i \frac{\exp(-V_i) }{Z_i}
  = \exp(-V_1) \Big(\frac{\alpha_1}{Z_1} + \sum_{i=2}^n \frac{\alpha_i }{Z_i}\exp(-V_i + V_1 ) \Big) ,
 $$
 where $V_i: x\mapsto \frac{1}{2}(x-m_i)^T\Sigma_i^{-1}(x-m_i) $ and $Z_i = \int \exp (-V_i) \mathrm{d}x$. For any $1<i\leq n$ the function $-V_i + V_1$ is quadratic and, by assumption, its Hessian satisfies $- \Sigma_i^{-1} + \Sigma_1^{-1} \prec 0$. This ensures that $-V_i + V_1$ is bounded from above, and so is the function $\Psi = \log(\frac{\alpha_1}{Z_1} + \sum_{i=2}^n \frac{\alpha_i}{Z_i} \exp(-V_i + V_1 ) )$. Furthermore, the relation $\Psi(x) \geq \log(\frac{\alpha_1}{Z_1})>-\infty$ holds for all $x\in\mathbb{R}^d$, which means that $\Psi$ is bounded from below. As a consequence we have $\rho\propto \exp(-V_1-\Psi)$ so that $\mu$ satisfies Assumption \ref{assu:convexityOfV} with $\Gamma = \Sigma_1^{-1}$ and for some finite $\kappa$.
\end{example}

\begin{example}[Uniform distribution]\label{Ex:UniformPrior}
 The uniform measure $\mu$ on a convex and bounded domain $\mathcal{D}\subset\mathbb{R}^d$ satisfies Assumption \ref{assu:convexityOfV} with 
 $$ 
  \Gamma = \frac{4}{\text{diam}(\mathcal{D})^2} I_d, \text{ and}\quad\kappa = \exp(1),
 $$
 where $\text{diam}(\mathcal{D})=\max\{\|x-y\|_2 : x,y\in \mathcal{D}\}$. To show this, let us first denote by $\mathcal{B}(m,r)$ the minimum enclosing ball of $\mathcal{D}$, that is, the ball with minimal radius $r\geq0$ such that $\mathcal{D}\subset\mathcal{B}(m,r)$. 
 By Jung's theorem \cite{jung1901ueber}, we have $r\leq\text{diam}(\mathcal{D})\sqrt{d/(2(d+1))}$ which yields $r\leq \text{diam}(\mathcal{D})/\sqrt{2}$.
 Second, observe that the density $\rho\propto 1_{\mathcal{D}}$ of $\mu$ can be written as
 $\rho\propto\exp(-V-\Psi)$ where $V(x)=\|x-m\|^2/r^2$ and $\Psi = -V$. To conclude, we recognize that $\nabla^2V(x)\succeq\frac{4}{\text{diam}(\mathcal{D})^2}I_d$ and $\exp(\sup\Psi - \inf\Psi ) \leq \exp(\sup_{x\in\mathcal{B}(m,r)}\|x-m\|^2/r^2)=\exp(1)$.
\end{example}

\begin{counterexample}[Laplace distribution]
 The one-dimensional Laplace distribution $\mu$, with $\text{supp}(\mu)=\mathbb{R}$ and density $\rho(x)\propto \exp(-|x|)$, does not satisfy Assumption \ref{assu:convexityOfV}. To verify this, suppose that we can express the Laplace density in the form $\rho(x)\propto \exp(-V(x)-\Psi(x))$, where $V$ and $\Psi$ satisfy \eqref{eq:convexityOfV} and \eqref{eq:kappa}, respectively. Then we have $V(x)+\Psi(x) = c+|x|$ for some $c\in\mathbb{R}$. By \eqref{eq:convexityOfV} we have $V''(x) \geq \Gamma$ for some $\Gamma > 0$, which implies $V(x)\geq \tfrac{\Gamma}{2}x^2+\beta x + \gamma$ for some $\beta,\gamma\in\mathbb{R}$. 
 This leads to $\Psi(x) \leq c+|x| -\tfrac{\Gamma}{2}x^2-\beta x - \gamma$, which contradicts condition \eqref{eq:kappa} requiring that $\Psi$ be bounded.
\end{counterexample}

Assumption \ref{assu:convexityOfV} provides sufficient conditions for the prior distribution $\mu$ to satisfy the following \textit{logarithmic Sobolev inequality:}
\begin{equation}\label{eq:logSob}
 \int h^2 \log \frac{h^2}{\int h^2 ~\mathrm{d}\mu} ~\mathrm{d}\mu \leq 2\kappa \int \| \nabla h \|_{\Gamma^{-1}}^2 ~\mathrm{d}\mu,
\end{equation}
for any continuously differentiable function $h:\text{supp}(\mu)\rightarrow\mathbb{R}$, where $\| \cdot \|_{\Gamma^{-1}}$ denotes the norm on $\mathbb{R}^d$ such that $\|x\|_{\Gamma^{-1}}^2 = x^T\Gamma^{-1}x$ for all $x\in\mathbb{R}^d$. Logarithmic Sobolev inequalities are a fundamental tool in the analysis of Markov semigroups \cite{bakry2013analysis} and in concentration inequalities; see \cite{boucheron2013concentration}. The inequality \eqref{eq:logSob} results from the Bakry--\'Emery theorem \cite{bakry1985diffusions,bobkov2000brunn,otto2000generalization}, which uses Assumption  \eqref{eq:convexityOfV}, and from the Holley--Stroock perturbation lemma \cite{holley1987logarithmic}, which uses Assumption  \eqref{eq:kappa}. 
More precisely, Proposition 3.1 in \cite{bobkov2000brunn} states that in the case $\kappa=1$ (i.e., $\Psi$ is constant), Assumption \eqref{eq:convexityOfV} is sufficient to have \eqref{eq:logSob}. Then, following the lines of the original proof in \cite{holley1987logarithmic} (see also the proof of Theorem 1.9 in \cite{gozlan2013characterization}, for instance), we have that \eqref{eq:logSob} still holds when $\Psi$ is such that $\kappa>1$. 

The logarithmic Sobolev inequality \eqref{eq:logSob} permits control of the KL divergence from prior to posterior, $\Dkl( \nu || \mu )$, provided that $f$ is sufficiently regular. Indeed, by letting $h=\sqrt{f/(\int f \mathrm{d}\mu)}$ in \eqref{eq:logSob}, we obtain
\begin{equation*}\label{eq:KL_bound_init}
 \Dkl( \nu || \mu ) \leq  \frac{\kappa}{2} \int \| \nabla \log f \|_{\Gamma^{-1}}^2 ~\mathrm{d}\nu.   
\end{equation*}
In order to obtain a similar bound for the KL divergence $\Dkl( \nu || \nu_r^* )$, we need the following \textit{subspace logarithmic Sobolev inequalities}.

\begin{theorem}[Subspace logarithmic Sobolev inequality]\label{prop:subspaceLogSob}
 Let $\mu$ be a probability distribution on $\mathbb{R}^d$ that satisfies Assumption \ref{assu:convexityOfV} for some $\Gamma\in\mathbb{R}^{d\times d}$ and $\kappa\geq1$. Then the relation
 \begin{equation}\label{eq:subspaceLogSob}
  \int h^2 \log \frac{h^2}{\mathbb{E}_\mu( h^2 |\sigma(P_r))} ~\mathrm{d}\mu \leq 2 \kappa \int \| (I_d - P_r^T)\nabla h \|_{\Gamma^{-1}}^2 ~\mathrm{d}\mu,
 \end{equation}
 holds for any projector $P_r\in\mathbb{R}^{d\times d}$ and for any continuously differentiable function $h:\mathrm{supp}(\mu) \rightarrow \mathbb{R}$ such that $\int\|\nabla h\|_{\Gamma^{-1}}^2\mathrm{d}\mu <\infty$.
\end{theorem}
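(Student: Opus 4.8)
The plan is to prove \eqref{eq:subspaceLogSob} by \emph{disintegrating} $\mu$ into its conditional measures on the fibers of $P_r$ and applying the ordinary logarithmic Sobolev inequality \eqref{eq:logSob} fiber by fiber. First I would normalize the geometry by the linear change of variables $y=\Gamma^{1/2}x$, which pushes $\mu$ forward to a measure $\tilde\mu$ with density $\propto\exp(-\tilde V-\tilde\Psi)$, where $\tilde V(y)=V(\Gamma^{-1/2}y)$ and $\tilde\Psi(y)=\Psi(\Gamma^{-1/2}y)$. A direct computation gives $\nabla^2\tilde V\succeq I_d$ while $\tilde\Psi$ has the same oscillation as $\Psi$, so $\tilde\mu$ satisfies Assumption \ref{assu:convexityOfV} with $\Gamma=I_d$ and the same $\kappa$. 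Writing $\tilde h(y)=h(\Gamma^{-1/2}y)$ and $\tilde P_r=\Gamma^{1/2}P_r\Gamma^{-1/2}$ (again a rank-$r$ projector), one checks that the two sides of \eqref{eq:subspaceLogSob} are invariant: the conditioning $\sigma$-algebras correspond because the map is a bijection, and since $\nabla h=\Gamma^{1/2}\nabla_y\tilde h$ and $I_d-\tilde P_r^T=\Gamma^{-1/2}(I_d-P_r^T)\Gamma^{1/2}$, the gradient term $\|(I_d-P_r^T)\nabla h\|_{\Gamma^{-1}}^2$ becomes the Euclidean $\|(I_d-\tilde P_r^T)\nabla_y\tilde h\|^2$. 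Hence it suffices to prove the statement when $\Gamma=I_d$.

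Next, with $\Gamma=I_d$, let $U_\perp$ have \emph{orthonormal} columns spanning $\Ker(P_r)$, and fix $x_r=P_rx$. By Proposition \ref{prop:ExplicitCondExp} the conditional law on the fiber $\{x_r+U_\perp\xi_\perp:\xi_\perp\in\mathbb{R}^{d-r}\}$ has density $\propto\exp(-V(x_r+U_\perp\xi_\perp)-\Psi(x_r+U_\perp\xi_\perp))$. Its potential has Hessian $U_\perp^T\nabla^2V\,U_\perp\succeq U_\perp^TU_\perp=I_{d-r}$, and the oscillation of its $\Psi$-part is still bounded by $\log\kappa$; thus this $(d-r)$-dimensional conditional measure again satisfies Assumption \ref{assu:convexityOfV}, so the ordinary logarithmic Sobolev inequality \eqref{eq:logSob} with constant $2\kappa$ applies to it. Applying that fiberwise inequality to the restriction $\xi_\perp\mapsto h(x_r+U_\perp\xi_\perp)$, whose gradient is $U_\perp^T\nabla h$ by the chain rule, and then integrating over $x_r$ and using the tower property of conditional expectation, yields
\[
 \int h^2\log\frac{h^2}{\mathbb{E}_\mu(h^2|\sigma(P_r))}\,\mathrm{d}\mu
 \;\le\; 2\kappa\int \|U_\perp^T\nabla h\|^2\,\mathrm{d}\mu
 \;=\; 2\kappa\int \|P_K\nabla h\|^2\,\mathrm{d}\mu,
\]
where $P_K=U_\perp U_\perp^T$ is the \emph{orthogonal} projector onto $\Ker(P_r)$.

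It remains to weaken $P_K$ to $I_d-P_r^T$. Since $I_d-P_r$ is a projector with image $\Ker(P_r)$, it acts as the identity on $\Ker(P_r)$, so $(I_d-P_r)P_K=P_K$; transposing gives $P_K(I_d-P_r^T)=P_K$, whence $P_K\nabla h=P_K\big((I_d-P_r^T)\nabla h\big)$ and, $P_K$ being an orthogonal projector, $\|P_K\nabla h\|\le\|(I_d-P_r^T)\nabla h\|$ pointwise. Substituting this bound into the display above completes the proof; note that the disintegration in fact yields the sharper inequality with $P_K$.

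I expect the \textbf{main obstacle} to be the rigorous justification of the disintegration step: verifying that the conditional measures genuinely inherit Assumption \ref{assu:convexityOfV} for $\mu$-almost every fiber, that the fiberwise inequality may be integrated (Fubini and measurability of $x_r$ mapping to the fiberwise quantities), and that the restriction of $h$ is an admissible test function in \eqref{eq:logSob} for almost every fiber, all under the sole global hypothesis $\int\|\nabla h\|_{\Gamma^{-1}}^2\,\mathrm{d}\mu<\infty$. By contrast, the change of variables and the final projector inequality are routine.
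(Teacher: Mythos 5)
Your proposal is correct and follows essentially the same route as the paper's proof: disintegrate $\mu$ along the fibers of $P_r$, check that each conditional density inherits Assumption \ref{assu:convexityOfV} (strong convexity of $U_\perp^T\nabla^2V\,U_\perp$ plus the same oscillation bound on $\Psi$), apply the ordinary logarithmic Sobolev inequality fiberwise to $\xi_\perp\mapsto h(P_rx+U_\perp\xi_\perp)$, integrate, and finally dominate the fiber gradient norm by $\|(I_d-P_r^T)\nabla h\|_{\Gamma^{-1}}$. The only differences are organizational: you whiten $\Gamma$ up front and close with an orthogonal-projection contraction, whereas the paper keeps general $\Gamma$ throughout and obtains the same final bound via a dual-norm computation with $\|\cdot\|_*=\|\cdot\|_{(U_\perp^T\Gamma U_\perp)^{-1}}$.
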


\begin{proof}
 The proof is given in Section \ref{proof:subspaceLogSob}.
\end{proof}

\begin{corollary}\label{cor:KL_bound}
 Let $\mu$ be a distribution which satisfies the subspace logarithmic Sobolev inequality \eqref{eq:subspaceLogSob} for some $\Gamma\in\mathbb{R}^{d\times d}$ and $\kappa\geq1$, and let $\nu$ be such that $\frac{\mathrm{d}\nu}{\mathrm{d}\mu} \propto f$ for some continuously differentiable function $f$ such that $\int\|\nabla \log f\|_{\Gamma^{-1}}^2\mathrm{d}\mu <\infty$. Then for any projector $P_r\in\mathbb{R}^{d\times d}$, the distribution $\nu_r^*$ such that $\frac{\mathrm{d}\nu_r^*}{\mathrm{d}\mu} \propto \mathbb{E}_\mu(f|\sigma(P_r))$ satisfies
 \begin{equation}
  \Dkl( \nu || \nu_r^* )
  \leq \frac{\kappa}{2} \mathcal{R}_{\Gamma}(P_r,H) \, , 
  \label{eq:KL_bound}
 \end{equation}
 where 
 \begin{equation}\label{eq:reconstructionError}
  \mathcal{R}_{\Gamma}(P_r,H) = \trace \big( \Gamma^{-1} (I_d-P_r)^T H (I_d-P_r) \big)
 \end{equation}and 
 \begin{equation} \label{eq:defH}
  H = \int (\nabla\log f)(\nabla\log f)^T  \mathrm{d}\nu.
\end{equation}
 
\end{corollary}

\begin{proof}
 The proof consists in rewriting \eqref{eq:subspaceLogSob} with $h=(f/Z)^{1/2}$, where $Z = \int \mathbb{E}_\mu (f|\sigma(P_r)) \, \mathrm{d}\mu = \int f \,\mathrm{d}\mu $. We have $\nabla h = \frac{1}{2}(f/Z)^{1/2}  \nabla \log f $ so that inequality \eqref{eq:subspaceLogSob} becomes
 $$
  \int \frac{f}{Z} \log \frac{f/Z}{\mathbb{E}_\mu( f |\sigma(P_r))/Z} ~\mathrm{d}\mu \leq \frac{\kappa}{2} \int \| (I_d - P_r^T)\nabla \log f \|_{\Gamma^{-1}}^2 \frac{f}{Z} \,\mathrm{d}\mu .
 $$
 and yields $\Dkl( \nu || \nu_r^* ) \leq \frac{\kappa}{2} \int \| (I_d - P_r^T) \nabla\log f \|_{\Gamma^{-1}}^2 \mathrm{d}\nu $. Finally,
  \begin{align}
\label{eq:reconstructionError_tmp}
  \mathcal{R}_{\Gamma}(P_r,H) &= \int \trace \big( \Gamma^{-1} (I_d - P_r^T) \nabla \log f  ( \nabla \log f)^T (I_d - P_r) \big) \mathrm{d}\nu \\
  &= \int \trace \big(  ( \nabla \log f)^T (I_d - P_r) \Gamma^{-1} (I_d - P_r^T) \nabla \log f \big) \mathrm{d}\nu \nonumber\\
  &= \int \| (I_d -  P_r^T)  \nabla  \log f  \|_{\Gamma^{-1}}^2 \mathrm{d}\nu , \nonumber 
 \end{align}
 holds. This concludes the proof.
\end{proof}

Corollary \ref{cor:KL_bound} provides an upper bound for $\Dkl( \nu || \nu_r^* )$ that is proportional to the \textit{reconstruction error} $\mathcal{R}_{\Gamma}(P_r,H) = \int \| \nabla  \log f -  P_r^T\nabla  \log f   \|_{\Gamma^{-1}}^2 \, \mathrm{d}\nu$, see \eqref{eq:reconstructionError_tmp}. Minimizing this bound over $P_r$ corresponds to finding the projector $P_r$ so that $P_r^T\nabla  \log f $ best approximates $\nabla \log f $ in the mean squared sense: this corresponds to  \emph{principal component analysis of $\nabla\log f$}; see \cite{reiss2020nonasymptotic}. 
The following proposition gives a closed form expression for a minimizer of $P_r \mapsto \mathcal{R}_{\Gamma}(P_r,H)$ over the set of the rank-$r$ projectors.
It corresponds to Proposition 2.6 in \cite{zahm2020gradient} where $\Sigma$ is replaced by $\Gamma^{-1}$. The proof relies on the Eckart--Young theorem, which provides the optimality properties of the singular value decomposition.

\begin{proposition}\label{prop:bestPr}
 Let $\Gamma\in\mathbb{R}^{d\times d}$ be a symmetric positive definite matrix and $H\in\mathbb{R}^{d\times d}$ be a symmetric positive semidefinite matrix. Denote by $(\lambda_i,v_i)\in\mathbb{R}_{\geq0}\times\mathbb{R}^d$ the $i$-th generalized eigenpair of the matrix pencil $(H,\Gamma)$, meaning $Hv_i = \lambda_i \Gamma v_i$ with $\lambda_i\geq\lambda_{i+1}$ and $\|v_i\|_{\Gamma}=1$, where $\| \cdot \|_{\Gamma}=\sqrt{(\cdot)^T\Gamma(\cdot)}$. For any $r \leq d$ we have
 \begin{equation}\label{eq:bestErrorPr}
  \min_{\substack{ P_r\in\mathbb{R}^{d\times d} \\ \text{rank-$r$ projector}}} 
  \mathcal{R}_{\Gamma}(P_r,H)
  = \sum_{i=r+1}^d \lambda_i \, .
 \end{equation}
 Furthermore, a solution to \eqref{eq:bestErrorPr} is the following $\Gamma$-orthogonal projector (i.e., satisfying  $x^T \Gamma x = \| x \|_{\Gamma}^2 = \| P_r x \|_{\Gamma}^2 + \| (I_d-P_r)x \|_{\Gamma}^2$ for all $x\in\mathbb{R}^d$) given by
 \begin{equation}\label{eq:BestPr}
  P_r = \Big(\sum_{i=1}^r v_iv_i^T \Big) \Gamma \,.
 \end{equation}

\end{proposition}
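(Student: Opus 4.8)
The plan is to eliminate the matrix $\Gamma$ by a congruence transformation, turning the generalized eigenvalue problem into a standard symmetric one. Write $\Gamma = \Gamma^{1/2}\Gamma^{1/2}$ with $\Gamma^{1/2}$ the symmetric positive definite square root, and set $\widetilde{H} = \Gamma^{-1/2} H \Gamma^{-1/2}$ and $\widetilde{P}_r = \Gamma^{1/2} P_r \Gamma^{-1/2}$. A short cyclic-trace computation, inserting $H = \Gamma^{1/2}\widetilde{H}\Gamma^{1/2}$ into $\mathcal{R}_\Gamma(P_r,H)$, should give the identity
\begin{equation*}
 \mathcal{R}_\Gamma(P_r, H) = \trace\big(\widetilde{H}\,(I_d - \widetilde{P}_r)(I_d-\widetilde{P}_r)^T\big).
\end{equation*}
Since $P_r \mapsto \widetilde{P}_r$ is a similarity transformation, it maps rank-$r$ projectors bijectively onto rank-$r$ projectors, so minimizing the left-hand side over rank-$r$ projectors $P_r$ is equivalent to minimizing the right-hand side over rank-$r$ projectors $\widetilde{P}_r$.

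Next I would identify the spectral data in the new coordinates. Setting $w_i = \Gamma^{1/2} v_i$, the relation $H v_i = \lambda_i \Gamma v_i$ becomes $\widetilde{H} w_i = \lambda_i w_i$, and the normalization $\|v_i\|_\Gamma = 1$ becomes $\|w_i\| = 1$; hence $(\lambda_i, w_i)$ are the eigenpairs of the symmetric matrix $\widetilde{H}$ with $\{w_i\}$ an orthonormal basis of $\mathbb{R}^d$. Under the transformation the candidate $P_r = (\sum_{i=1}^r v_i v_i^T)\Gamma$ becomes $\widetilde{P}_r = \sum_{i=1}^r w_i w_i^T$, the \emph{orthogonal} projector onto the span of the $r$ dominant eigenvectors of $\widetilde{H}$, so its complement $I_d - \widetilde{P}_r = \sum_{i=r+1}^d w_i w_i^T$ is orthogonal as well. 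For an orthogonal projector $(I_d-\widetilde{P}_r)(I_d-\widetilde{P}_r)^T = I_d - \widetilde{P}_r$, and evaluating the trace in the $w$-basis immediately yields the value $\sum_{i=r+1}^d \lambda_i$, which proves achievability. The $\Gamma$-orthogonality of $P_r$ follows because $\Gamma P_r = \Gamma(\sum_i v_i v_i^T)\Gamma$ is symmetric, i.e.\ $P_r$ is self-adjoint for the $\Gamma$-inner product.

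It then remains to prove the matching lower bound: for every rank-$r$ projector $\widetilde{P}_r$, writing $\widetilde\Pi = I_d - \widetilde{P}_r$ (a rank-$(d-r)$ projector) and $\Pi_S$ for the \emph{orthogonal} projector onto $S = \mathrm{Im}(\widetilde\Pi)$, I claim that $\widetilde\Pi\widetilde\Pi^T \succeq \Pi_S$. Granting this, since $\widetilde{H}\succeq 0$ we obtain $\trace(\widetilde{H}\widetilde\Pi\widetilde\Pi^T) \geq \trace(\widetilde{H}\Pi_S)$, and the minimum of $\trace(\widetilde{H}\Pi_S)$ over rank-$(d-r)$ orthogonal projectors equals the sum of the $d-r$ smallest eigenvalues of $\widetilde{H}$, namely $\sum_{i=r+1}^d \lambda_i$, by the Ky Fan / Courant--Fischer min-max theorem. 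Combined with the achievability above, this closes the argument.

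The one nontrivial step, and the main obstacle, is the operator inequality $\widetilde\Pi\widetilde\Pi^T \succeq \Pi_S$, which quantifies the intuition that an oblique projector is ``larger'' than the orthogonal projector onto the same range. I would prove it from the two identities $\Pi_S \widetilde\Pi = \widetilde\Pi$ and $\widetilde\Pi \Pi_S = \Pi_S$, valid because $\widetilde\Pi$ has range $S$ and fixes $S$ pointwise. Transposing the second gives $\Pi_S \widetilde\Pi^T = \Pi_S$, whence for any $x$ one has $\|\Pi_S x\|^2 = x^T \Pi_S \widetilde\Pi^T x = \langle \Pi_S x, \widetilde\Pi^T x\rangle \leq \|\Pi_S x\|\,\|\widetilde\Pi^T x\|$ by Cauchy--Schwarz, so $\|\Pi_S x\| \leq \|\widetilde\Pi^T x\|$, which is exactly $\Pi_S \preceq \widetilde\Pi\widetilde\Pi^T$. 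Since the statement reproduces Proposition 2.6 of \cite{zahm2018gradient} with $\Sigma$ replaced by $\Gamma^{-1}$, one could alternatively invoke that result directly.
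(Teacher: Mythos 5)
Your argument is correct and complete, but note that the paper itself does not prove this proposition at all: it simply defers to Proposition 2.6 of \cite{zahm2018gradient} (with $\Sigma$ replaced by $\Gamma^{-1}$), a possibility you acknowledge in your final sentence. Your self-contained route is sound: the whitening identity $\mathcal{R}_{\Gamma}(P_r,H)=\trace\big(\widetilde H (I_d-\widetilde P_r)(I_d-\widetilde P_r)^T\big)$ with $\widetilde H=\Gamma^{-1/2}H\Gamma^{-1/2}$ and $\widetilde P_r=\Gamma^{1/2}P_r\Gamma^{-1/2}$ checks out by a cyclic-trace computation, the similarity $P_r\mapsto\widetilde P_r$ does bijectively preserve rank-$r$ projectors, and the achievability computation is immediate once one knows the $w_i=\Gamma^{1/2}v_i$ are orthonormal. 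The only point you leave implicit is exactly that: the statement only asserts $\|v_i\|_\Gamma=1$, and you should say explicitly that since $\widetilde H$ is symmetric its eigenvectors $w_i$ can be (and are, by the standard convention for a symmetric pencil) chosen orthonormal, i.e.\ the $v_i$ are $\Gamma$-orthogonal and not merely $\Gamma$-normalized; without this, $\sum_{i=1}^r v_iv_i^T\Gamma$ need not be a projector. The genuinely nontrivial ingredient --- the comparison $\widetilde\Pi\widetilde\Pi^T\succeq\Pi_S$ between an oblique projector and the orthogonal projector with the same range, proved via $\Pi_S\widetilde\Pi^T=\Pi_S$ and Cauchy--Schwarz --- is exactly the right lemma to reduce the oblique-projector minimization to the Ky Fan principle, and it is correct as written. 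In short: a valid proof of a statement the paper only cites.
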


Corollary \ref{cor:KL_bound} and Proposition \ref{prop:bestPr} ensure that, provided $\mu$ satisfies the subspace logarithmic Sobolev inequality \eqref{eq:subspaceLogSob} and provided $P_r$ is defined as in \eqref{eq:BestPr}, the approximation $\nu_r^*$ of $\nu$ defined by $\frac{\mathrm{d}\nu_r^*}{\mathrm{d}\mu} \propto \mathbb{E}_\mu(f|\sigma(P_r))$ is such that 
$$
 \Dkl(\nu||\nu_r^*)\leq \frac{\kappa}{2} \sum_{i=r+1}^d \lambda_i,
$$
where $\lambda_i$ is the $i$-th generalized eigenvalue of $H = \int (\nabla\log f)(\nabla\log f)^T  \mathrm{d}\nu$. This relation holds for any $r\leq d$. Then for any $\varepsilon\geq 0$, the choice $r=r(\varepsilon) \coloneqq \min\{ r :  \frac{\kappa}{2} \sum_{i=r+1}^d \lambda_i \leq \varepsilon \}$ is sufficient to obtain $\Dkl(\nu||\nu_r^*)\leq \varepsilon$. Observe that a strong decay in the generalized eigenvalues of $H$ ensures that $r(\varepsilon)\ll d$.
In particular, if $H$ is rank deficient, we have $\lambda_i=0$ for all $i>\text{rank}(H)$ so that $r \geq \text{rank}(H)$ implies $\Dkl( \nu || \nu_r^* ) = 0$. The spectrum of the matrix $H$ reveals the low effective dimensionality of the posterior distribution: a strong decay in the generalized spectrum of $H$, or certainly a rank deficiency of $H$, ensures that there exists an approximation $\nu_r^*$ of $\nu$ such that $\Dkl(\nu||\nu_r^*)\leq \varepsilon$ with small $r(\varepsilon)\ll d$.

\begin{remark}[Prior-based dimension reduction]\label{sec:PriorBasedDimRed}
For any projector $P_r$ we can write
\begin{align*}
 \mathcal{R}_{\Gamma}(P_r,H)
 &= \trace( \Gamma^{-1} (I_d-P_r^T) H (I_d-P_r) ) \\
 &= \trace( H (I_d-P_r) \Gamma^{-1} (I_d-P_r^T) )  \\
 &\leq  \|H\| \trace( (I_d-P_r) \Gamma^{-1} (I_d-P_r^T) ) \\
 &= \|H\| \mathcal{R}_{I_d}(P_r^T,\Gamma^{-1}) ,
\end{align*}
where $\|H\|=\sup\{ |x^T Hx| : x\in\mathbb{R}^d, \|x\|_2\leq1 \}$ is the spectral norm of $H$. 
In the case of a Gaussian prior, $\mu=\mathcal{N}(m,\Sigma)$, we have $\Gamma=\Sigma^{-1}$ (see Example \ref{Ex:GaussianPrior}) and so Proposition \ref{prop:bestPr} ensures that the minimizer of $P_r\mapsto \mathcal{R}_{I_d}(P_r^T,\Gamma^{-1}) = \mathcal{R}_{I_d}(P_r^T,\Sigma)$ is the orthogonal projector onto the leading eigenspace of the prior covariance $\Sigma$. This projector corresponds to the truncated Karhunen--Loève decomposition; see Section \ref{sec:KarhunenLoeve} for a detailed discussion.

\end{remark}

\subsection{An illustrative example}\label{sec:SimpleExample}

To illustrate how sharp the bound given by Corollary \ref{cor:KL_bound} can be, we consider a simple example for which the Kullback--Leibler divergence $\Dkl( \nu || \nu_r^* )$ is computable in closed form. This allows a comparison of the error $\Dkl( \nu || \nu_r^* ) $ and its upper bound $\frac{\kappa}{2}\mathcal{R}_{\Gamma}(P_r,H)$. 
~\\

Assume that the prior $\mu$ is the standard Gaussian distribution $\mathcal{N}(0,I_d)$ and let the likelihood function be given by
$$
 f: x\mapsto  \exp( -\frac{1}{2} x^T A x ),
$$
where $A\in\mathbb{R}^{d\times d}$ is a symmetric positive semidefinite matrix. The Lebesgue density of $\mu$ is $\rho(x) \propto \exp( -\frac{1}{2} x^T x )$, so that $\mu$ satisfies Assumption \ref{assu:convexityOfV} with $\Gamma=I_d$ and $\kappa=1$. The posterior $\nu$ defined by $\frac{\mathrm{d}\nu}{\mathrm{d}\mu}\propto f$ is also Gaussian with zero mean and covariance $ \Sigma = ( I_d + A )^{-1}$. In this setting, the matrix $H$ defined by \eqref{eq:defH} can be written as follows
$$
 H = \int (\nabla\log f)(\nabla\log f)^T \mathrm{d}\nu = \int (Ax)(Ax)^T \nu(\mathrm{d}x) = A ( I_d + A )^{-1} A.
$$
Consider the generalized eigenvalue problem $Hv_i=\lambda_i \Gamma v_i$ which, since $\Gamma=I_d$, is simply the standard eigenvalue problem $Hv_i=\lambda_i v_i$. Notice that $H$ is a rational function in $A$, so that $H$ and $A$ have the same eigenvectors and their eigenvalues satisfy the relation
$$
 \lambda_i = \frac{\alpha_i^2}{1+\alpha_i},
$$
for all $1\leq i \leq d$, where $\alpha_i$ is the $i$-th largest eigenvalue of $A$. According to Proposition \ref{prop:bestPr}, a projector minimizing $P_r\mapsto \mathcal{R}_{\Gamma}(P_r,H)$ over the set of rank-$r$ projectors is $P_r = \sum_{i=1}^r v_i v_i^T$. Corollary \ref{cor:KL_bound} ensures that the distribution $\nu_r^*$ defined by $\frac{\mathrm{d}\nu_r^*}{\mathrm{d}\mu}\propto \mathbb{E}_\mu(f|\sigma(P_r))$ is such that
\begin{equation}\label{eq:tmp065872}
 \Dkl( \nu || \nu_r^* )  \leq \frac{1}{2} \sum_{i=r+1}^d \lambda_i 
 = \frac{1}{2} \sum_{i=r+1}^d \frac{\alpha_i^2}{1+\alpha_i}.
\end{equation}
To analyze the sharpness of this inequality, we now compute $\Dkl( \nu || \nu_r^* )$. Using Proposition \ref{prop:ExplicitCondExp}, we can express the conditional expectation $\mathbb{E}_\mu(f|\sigma(P_r))$ as follows:
\begin{equation}\label{eq:CondExpGaussianCase}
 \mathbb{E}_\mu(f|\sigma(P_r)) : x\mapsto (\text{det}(\Sigma_r^{-1}\Sigma) )^{1/2} \exp( -\frac{1}{2}x^T P_rAP_r x ),
\end{equation}
where $\Sigma_r = (I_d+P_rAP_r)^{-1}$. Then $\nu_r^*$ such that $\frac{\mathrm{d}\nu_r^*}{\mathrm{d}\mu}\propto \mathbb{E}_\mu(f|\sigma(P_r))$ is Gaussian with zero mean and covariance $\Sigma_r$. 
The Kullback--Leibler divergence from $\nu_r^*=\mathcal{N}(0,\Sigma_r)$ to $\nu=\mathcal{N}(0,\Sigma)$ admits the following closed form expression \cite{kullback1997information}:
\begin{equation}\label{eq:tmp620934}
 \Dkl( \nu || \nu_r^* ) 
 = \frac{1}{2} \big( \trace( \Sigma_r^{-1} \Sigma  ) - \log (\text{det}(\Sigma_r^{-1} \Sigma)) -d \big).
\end{equation}
To continue the calculation, one needs the eigenvalues of $\Sigma_r^{-1} \Sigma$. Let $U$ be the orthogonal matrix containing the eigenvectors of $A$ and let $D=\text{diag}(\alpha_1,\hdots,\alpha_d)$ so that $A=U D U^T$. By construction, $P_rAP_r = U D_r U^T$ where $D_r=\text{diag}(\alpha_1,\hdots,\alpha_r,0,\hdots,0)$, so that
$
 \Sigma_r^{-1} \Sigma = (I_d+P_rAP_r) (I_d+A )^{-1}= U  (I_d+ D_r) (I_d+D )^{-1}U.
$
From this relation, we deduce that the $i$-th eigenvalue of $\Sigma_r^{-1} \Sigma$ is $1$ when $i\leq r$ and $(1+\alpha_i)^{-1}$ otherwise. Substituting these eigenvalues into \eqref{eq:tmp620934} yields
\begin{align*}
 \Dkl( \nu || \nu_r^* ) 
 &=\frac{1}{2} \Big( r + \sum_{i=r+1}^{d} \frac{1}{1+\alpha_i} + \sum_{i=r+1}^d \log(1+\alpha_i) -d \Big)  \\
 &= \frac{1}{2}  \sum_{i=r+1}^{d} \Big( \log(1+\alpha_i) -\frac{\alpha_i}{1+\alpha_i}  \Big).
\end{align*}

We now analyze the deficit in inequality \eqref{eq:tmp065872}. With a Taylor expansion as $\alpha_i$ goes to zero for all $i\geq r$, we can write
\begin{align*}
 \Dkl( \nu || \nu_r^* ) &= \frac{1}{4}  \sum_{i=r+1}^{d} \big( \alpha_i^2  + \mathcal{O}(\alpha_i^3)  \big) , \ \text{and}\\
 \frac{1}{2} \sum_{i=r+1}^d \frac{\alpha_i^2}{1+\alpha_i} &= \frac{1}{2} \sum_{i=r+1}^d \big(  \alpha_i^2 +\mathcal{O}(\alpha_i^3) \big) .
\end{align*}
If the function $f$ is nearly constant ($\alpha_i\approx 0$) along the subspace $\text{span}\{v_{r+1},\hdots,v_d\}$, then the upper bound \eqref{eq:tmp065872} is close to $2\Dkl( \nu || \nu_r^* )$. 

In this particular example, the projector obtained by minimizing the upper bound in fact yields the \textit{optimal} approximation of $\nu$ in Kullback--Leibler divergence, for any given rank $r$. 
This can be shown by Theorem 2.3 in \cite{spantini2015optimal}.
Of course, minimizing the upper bound does not produce the optimal projector in general.

\section{Building the approximation}\label{sec:3}

In this section we propose and analyze algorithms for the numerical construction of a low-rank projector $P_r$ and of a profile function $g$, such that the distribution $\nu_r$ given by $\frac{\mathrm{d}\nu_r}{\mathrm{d}\mu}\propto g\circ P_r$ is a controlled approximation of the posterior distribution $\nu$. 
Recall that in the previous section we obtained the following decomposition of the Kullback--Leibler divergence:
\begin{equation}\label{eq:DecompKL}
 \Dkl( \nu || \nu_r ) \overset{\eqref{eq:pythagoras}}{=} \Dkl( \nu || \nu_r^* ) + \Dkl( \nu_r^* || \nu_r ),
\end{equation}
where $\frac{\mathrm{d} \nu_r^* }{\mathrm{d}\mu} \propto \mathbb{E}_\mu(f|\sigma(P_r))$ gives the optimal (but intractable) probability distribution $\nu_r^*$ we can obtain for a given $P_r$. The term $\Dkl( \nu || \nu_r^* )$ thus measures the error between the target $\nu$ and its best approximation for a given projector.
Using the material presented in Section~\ref{sec:ControlledApprox}, Section~\ref{sec:ConstructionOfTheProjector} below shows how to build $P_r$ so that the error $\Dkl( \nu || \nu_r^* )$---or, more precisely, an upper bound for this error---is arbitrarily small.
The second term $\Dkl( \nu_r^* || \nu_r )$ can be interpreted as a distance between $g\circ P_r$ and $\mathbb{E}_\mu(f|\sigma(P_r))$, so that the construction of $\nu_r$ consists essentially in approximating a conditional expectation. 
In Section~\ref{sec:ApproximationConditionalExpectation}, we propose an approximation $\nu_r=\widehat\nu_r$ that relies on a sample estimate $\widehat F_r$ of the conditional expectation $\mathbb{E}_\mu(f|\sigma(P_r))$.
In the end, the computational strategy we adopt here is to: (i) construct $P_r$ to control the first term; and (ii) given such a projector, build the function $g$ so that the second term is arbitrarily close to zero---or, at least, of the same order of magnitude as the first term.

\subsection{Construction of the projector}\label{sec:ConstructionOfTheProjector}

Let us recall some of the results of Section \ref{sec:ControlledApprox}. Under Assumption \ref{assu:convexityOfV} and provided that $f$ is sufficiently regular, Corollary \ref{cor:KL_bound} provides an upper bound for the first term $ \Dkl( \nu || \nu_r^* )$ in the decomposition \eqref{eq:DecompKL} by means of the reconstruction error $\mathcal{R}_{\Gamma}(P_r,H)=\trace(\Gamma^{-1}(I_d-P_r^T )H (I_d-P_r))$ where $H=\int(\nabla\log f)(\nabla\log f)^T \mathrm{d}\nu$.
This bound holds for any projector $P_r$. We denote by $P_r^*$ a rank-$r$ projector which minimizes the reconstruction error, meaning
\begin{equation}\label{eq:BestPr_2}
 \mathcal{R}_{\Gamma}(P_r^*, H)
 = 
 \min_{\substack{
  P_r\in\mathbb{R}^{d\times d} , \\
  \text{ rank-$r$ projector} }}
 \mathcal{R}_{\Gamma}(P_r,H).
\end{equation}
By Proposition \ref{prop:bestPr}, $P_r^*$ can be obtained by means of the generalized eigendecomposition of $H$. In practice, however, the matrix $H$ may be difficult to compute exactly because it requires computing a high-dimensional integral when $d\gg1$. Instead, we consider the rank-$r$ projector $\widehat P_r$ that is a minimizer of the \emph{approximate} reconstruction error $\mathcal{R}_{\Gamma}(P_r,\widehat H)$, i.e., 
\begin{equation}\label{eq:ApproxPr}
 \mathcal{R}_{\Gamma}(\widehat P_r, \widehat H)
 = 
 \min_{\substack{
  P_r\in\mathbb{R}^{d\times d} , \\
  \text{ rank-$r$ projector} }}
 \mathcal{R}_{\Gamma}(P_r,\widehat H),
\end{equation}
where $\widehat H $ is a Monte Carlo approximation of $H$ defined by
\begin{equation}\label{eq:defHK}
 \widehat H = \frac{1}{K} \sum_{i=1}^K \big( \nabla \log f(X_i) \big) \big( \nabla \log f( X_i ) \big)^T  .
\end{equation}
Here, for the purpose of theoretical analysis, we assume that $X_1,\hdots,X_K$ are independent draws from the posterior distribution $\nu$. Since drawing independent samples from the posterior is computationally infeasible for most problems, we will present in Section~\ref{sec:alg} an iterative algorithm that can be used to estimate $H$ in practice.

The minimization problem in \eqref{eq:ApproxPr} and \eqref{eq:defHK} corresponds to the principal component analysis (PCA) of the random vector $\nabla\log f(X)$, $X\sim \nu$, and its solution follows directly from the solution of a generalized eigenvalue problem; see \cite{kokiopoulou2011trace}. An important question is how large should $K$ be in order to control the reconstruction error $\mathcal{R}_{\Gamma}(\widehat P_r,H)$? We refer to \cite{blanchard2007statistical,reiss2020nonasymptotic} for recent progress in this direction. The following proposition gives a new sufficient condition on $K$ so that $\widehat P_r$ is a quasi-optimal solution to \eqref{eq:BestPr_2}. This result relies on concentration properties of sub-Gaussian random vectors; see \cite{boucheron2013concentration,vershynin_2012}. The proof is given in Section \ref{proof:ControlReconstructionError}.

\begin{proposition}\label{prop:ControlReconstructionError}
 
 Let $\mu,\nu$ be two probability distributions and $f$ be a continuously differentiable function.
 Assume there exists a constant $L\geq 0$ such that the random vector $\nabla \log f(X)$ with $X\sim \nu$ satisfies
 \begin{equation}\label{eq:SubgaussianNablaLogF}
  \| u^T  \nabla \log f(X) \|_{\psi_2} \leq L \, \sqrt{ u^T  H u },
 \end{equation}
 for any $u\in\mathbb{R}^d$, where $H = \int (\nabla \log f)(\nabla \log f)^T  \mathrm{d}\nu$. Here $\| \cdot \|_{\psi_2}$ denotes the sub-Gaussian norm, meaning $\| \xi \|_{\psi_2} = \sup_{k\geq1} k^{-1/2}\mathbb{E}( |\xi|^k  )^{1/k} $ for any real-valued random variable $\xi$. Let $\widehat H$ be a $K$-sample Monte Carlo approximation of $H$. For any $0< \delta ,\eta <1$, the condition
 \begin{equation}\label{eq:KlowerBound}
  K\geq \Omega \delta^{-2} L^4 \big(  \sqrt{\rank(H)} + \sqrt{\log(2\eta^{-1})}\big)^{2},
 \end{equation}
 for some absolute (numerical) constant $\Omega$, ensures that with probability at least $1-\eta$, the following relation holds for any projector $P_r$:
 \begin{equation}\label{eq:ControlReconstructionError}
  (1-\delta) ~\mathcal{R}_{\Gamma}(P_r, H) 
  \leq \mathcal{R}_{\Gamma}(P_r,\widehat H) 
  \leq (1+\delta) ~\mathcal{R}_{\Gamma}(P_r,H).
 \end{equation}
\end{proposition}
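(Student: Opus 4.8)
The plan is to reduce the uniform two-sided bound \eqref{eq:ControlReconstructionError} to a single operator-norm concentration inequality for a suitably whitened sample matrix, and then to invoke a standard covariance-estimation result for sub-Gaussian random vectors.

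First I would reformulate the reconstruction error as a trace against a positive semidefinite matrix. Using the cyclic property of the trace, $\mathcal{R}_{\Gamma}(P_r,H) = \trace(HM)$ and $\mathcal{R}_{\Gamma}(P_r,\widehat H)=\trace(\widehat H M)$, where $M=(I_d-P_r)\Gamma^{-1}(I_d-P_r^T)$ is symmetric positive semidefinite for every projector $P_r$. Hence \eqref{eq:ControlReconstructionError} is equivalent to requiring that $|\trace((\widehat H-H)M)|\leq \delta\,\trace(HM)$ hold simultaneously for all such $M$. The key structural observation is that, since $u^T H u=0$ forces $\|u^T\nabla\log f(X)\|_{\psi_2}=0$ by \eqref{eq:SubgaussianNablaLogF}, the vectors $\nabla\log f(X_i)$ lie almost surely in $\mathrm{Range}(H)$, and therefore so do $H$ and $\widehat H$. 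Let $\Pi$ be the orthogonal projector onto $\mathrm{Range}(H)$, let $H^{1/2}$ be the symmetric positive semidefinite square root, and $H^{+/2}$ its pseudoinverse, so that $H^{1/2}H^{+/2}=\Pi$. Defining the whitened vectors $Z_i=H^{+/2}\nabla\log f(X_i)$ and $\widehat S=\tfrac1K\sum_i Z_iZ_i^T=H^{+/2}\widehat H H^{+/2}$, one has $\widehat H-H=H^{1/2}(\widehat S-\Pi)H^{1/2}$, and since $\tilde M:=H^{1/2}MH^{1/2}\succeq0$,
$$
 |\trace((\widehat H-H)M)|=|\trace((\widehat S-\Pi)\tilde M)|\leq \|\widehat S-\Pi\|\,\trace(\tilde M)=\|\widehat S-\Pi\|\,\trace(HM),
$$
using the elementary inequality $|\trace(AB)|\leq\|A\|\,\trace(B)$ valid for symmetric $A$ and positive semidefinite $B$. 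This shows it suffices to prove $\|\widehat S-\Pi\|\leq\delta$ with probability at least $1-\eta$, a statement that no longer depends on $P_r$.

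Next I would verify that $\widehat S-\Pi$ is precisely the centered sample covariance of isotropic sub-Gaussian vectors in dimension $s:=\rank(H)$. The $Z_i$ are i.i.d., take values in the $s$-dimensional subspace $\mathrm{Range}(H)$, have second moment $\Ex[Z_iZ_i^T]=H^{+/2}HH^{+/2}=\Pi$ (the identity on that subspace), and inherit the sub-Gaussian bound $\|w^TZ\|_{\psi_2}=\|(H^{+/2}w)^T\nabla\log f(X)\|_{\psi_2}\leq L\sqrt{w^T\Pi w}\leq L\|w\|_2$ from \eqref{eq:SubgaussianNablaLogF}. A standard matrix-concentration bound for sub-Gaussian covariance estimation (e.g. the result of Vershynin \cite{vershynin_2012}) then gives, with probability at least $1-\eta$, an estimate of the form $\|\widehat S-\Pi\|\leq C\,L^2\big(\sqrt{s/K}+\sqrt{\log(2\eta^{-1})/K}\big)$. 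Imposing that the right-hand side be at most $\delta$ and solving for $K$ reproduces exactly the sample-complexity condition \eqref{eq:KlowerBound} for an appropriate absolute constant $\Omega$.

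The main obstacle is twofold and is confined to the two non-routine points above. First, one must ensure that the dimension factor appearing in the sample complexity is $\rank(H)$ rather than the ambient dimension $d$; this is precisely what the whitening and the almost-sure confinement of the $Z_i$ to $\mathrm{Range}(H)$ deliver, since the effective covariance to be estimated is the identity on an $s$-dimensional space. Second, one must correctly propagate the sub-Gaussian constant $L$ through the whitening and into the quartic dependence $L^4$ of \eqref{eq:KlowerBound}, which originates from the sub-exponential behaviour of the rank-one summands $Z_iZ_i^T$ in the concentration bound. The remaining ingredients---the trace reformulation, the identity $\widehat H-H=H^{1/2}(\widehat S-\Pi)H^{1/2}$, and the inequality $|\trace(AB)|\leq\|A\|\,\trace(B)$---are straightforward.
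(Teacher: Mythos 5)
Your proposal is correct and follows essentially the same route as the paper's proof: both reduce the uniform bound over projectors to a trace inequality against the positive semidefinite matrix $(I_d-P_r)\Gamma^{-1}(I_d-P_r^T)$, whiten the gradient vectors so that they become isotropic sub-Gaussian vectors confined to the $\rank(H)$-dimensional range of $H$ (the paper factors $H=GG^T$ with $G$ of full column rank rather than using $H^{+/2}$, which is a cosmetic difference), and invoke Vershynin's Theorem 5.39 to obtain the operator-norm concentration that yields the sample-size condition. The only presentational difference is that the paper passes through the two-sided semidefinite ordering $(1-\delta)H\preceq\widehat H\preceq(1+\delta)H$ before taking traces, whereas you bound $|\trace((\widehat H-H)M)|$ directly; these are equivalent.
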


Assumption \eqref{eq:SubgaussianNablaLogF} means that $\nabla \log f(X)$ is a sub-Gaussian vector. Intuitively it says that the tails of the distribution of $\nabla \log f(X)$ are at most Gaussian. Together with the independence of $X_1,\hdots,X_K$, the sub-Gaussian property is an essential ingredient in the proof of Proposition \ref{prop:ControlReconstructionError}. We give now examples for which \eqref{eq:SubgaussianNablaLogF} is satisfied. 

\begin{example}[Bounded gradients]
 Assume $x\mapsto\nabla \log f(x)$ is uniformly bounded and assume $H$ is full rank. 
 Let $L<\infty$ be such that $\|\nabla \log f(x)\|_{H^{-1}} \leq L$ for all $x\in\mathbb{R}^d$.
 We have
 $$
  \sup_{u\in \mathbb{R}^d \backslash\{0\}}
  \frac{| u^T  \nabla \log f(x) |}{\sqrt{u^T  H u}}  
  =\|\nabla \log f(x)\|_{H^{-1}}
  \leq L
 $$
 for any $x\in\mathbb{R}^d$ so that $| u^T  \nabla \log f(x) |\leq L \sqrt{u^T  H u}$ for all $x,u\in\mathbb{R}^d$. This means that $| u^T  \nabla \log f(X) |$ is almost surely bounded by $L \sqrt{u^T  H u}$. Since the $\psi_2$-norm is bounded by the $L^{\infty}$-norm, we have $\|u^T  \nabla \log f(X)\|_{\psi_2} \leq L \sqrt{u^T  H u} $.  This shows that relation \eqref{eq:SubgaussianNablaLogF} holds true for all $u\in \mathbb{R}^d$.
\end{example}

\begin{example}[Unbounded gradients]
 We now consider an example where $\nabla \log f(X)$ satisfies \eqref{eq:SubgaussianNablaLogF} without being bounded. As in Section \ref{sec:SimpleExample}, let $\mu=\mathcal{N}(0,I_d)$ be the standard normal prior and let $f:x\mapsto \exp(-\frac{1}{2}x^T Ax)$ for some symmetric matrix $A\succeq0$. Notice that $\nabla \log f(x) = -Ax$ can be arbitrarily large when $\|x\|\rightarrow \infty$. We have $X\sim\nu=\mathcal{N}(0,(I_d+A)^{-1})$ and $H=A (I_d+A)^{-1} A$. For any $u\in\mathbb{R}^d$, the random variable $Z=u^T  \nabla \log f(X) = - u^T  AX$ is Gaussian with zero mean and variance $\sigma_Z^2 = u^T  A (I_d+A)^{-1} A u = u^T  H u$. Then $Z$ is sub-Gaussian and $\|Z\|_{\psi_2}\leq L \sigma_Z$ holds for some absolute constant $L$; see, for instance, Example 5.8 in \cite{vershynin_2012}. This means that \eqref{eq:SubgaussianNablaLogF} holds for all $u\in \mathbb{R}^d$ with a constant $L$ which does not depend on $A$. 
\end{example}

Proposition \ref{prop:ControlReconstructionError} gives a sufficient condition for \eqref{eq:ControlReconstructionError} to hold with high probability. This relation yields the following quasi-optimality result:
\begin{equation}\label{eq:06ejfow}
 \mathcal{R}_{\Gamma}(\widehat P_r,H) 
 \overset{\eqref{eq:ControlReconstructionError}}{\leq} \frac{1}{1-\delta} \mathcal{R}_{\Gamma}(\widehat P_r,\widehat H) 
 \overset{\eqref{eq:ApproxPr}}{\leq} \frac{1}{1-\delta} \mathcal{R}_{\Gamma}(P_r^*,\widehat H) 
 \overset{\eqref{eq:ControlReconstructionError}}{\leq} \frac{1+\delta}{1-\delta} \mathcal{R}_{\Gamma}(P_r^*, H) .
\end{equation}
Then, even though $\widehat P_r$ is not a minimizer of $P_r\mapsto \mathcal{R}_{\Gamma}(P_r, H)$, the reconstruction error  $\mathcal{R}_{\Gamma}(\widehat P_r,H)$ is no greater than an arbitrary multiplicative constant $\frac{1+\delta}{1-\delta} \geq 1$ times the minimum of the reconstruction error $\mathcal{R}_{\Gamma}(P_r^*,H)$. In particular $\mathcal{R}_{\Gamma}(P_r^*,H)=0$ implies $\mathcal{R}_{\Gamma}(\widehat P_r,H)=0$. 
Together with Corollary \ref{cor:KL_bound} and Proposition \ref{prop:bestPr}, the above quasi-optimality result allows us to deduce that, with probability at least $1 - \eta$, we have
\begin{equation}
 \Dkl( \nu || \nu_r^* )
 \overset{\eqref{eq:KL_bound}}{\leq} \frac{\kappa}{2} \mathcal{R}_{\Gamma}(\widehat P_r,H) 
 \overset{\eqref{eq:06ejfow}}{\leq} \frac{\kappa(1+\delta)}{2(1-\delta)} \mathcal{R}_{\Gamma}(P_r^*,H)
 \overset{\eqref{eq:bestErrorPr}}{=} \frac{\kappa(1+\delta)}{2(1-\delta)}\sum_{i=r+1}^d \lambda_i ,
\label{eq:boundKLoverall}
\end{equation}
where $\frac{\mathrm{d}\nu_r^*}{\mathrm{d}\mu}\propto \mathbb{E}_\mu(f|\sigma(P_r))$ with $P_r=\widehat P_r$. 
This relation ensures that, up to a multiplicative constant $\frac{1+\delta}{1-\delta}$, the bound on the error $\Dkl( \nu || \nu_r^* )$ decays with $r$ at the same rate as if we had used the projector $P_r=P_r^*$ that minimizes the true reconstruction error.

We note that condition \eqref{eq:KlowerBound} requires $K$ to be at least proportional to the rank of $H$. If $H$ is full rank and if $d\gg 1$, then $K$ ought to be large in order to satisfy \eqref{eq:KlowerBound}. In practice, though, we observe that suitable projectors can be obtained by choosing $K$ proportional to the rank $r$ of $\widehat P_r$ (which is PCA's ``rule of thumb''). Here there is a challenge in finding weaker conditions on $K$ that nonetheless allow one to obtain a quasi-optimality result. As in \cite{reiss2020nonasymptotic}, one should try to exploit some properties of $H$ such as a rapid spectral decay or a large spectral gap.

\subsection{Approximation of the conditional expectation}\label{sec:ApproximationConditionalExpectation}

In this section we assume that a rank-$r$ projector $P_r\in\mathbb{R}^{d\times d}$ is given. We consider the problem of approximating the conditional expectation $\mathbb{E}_\mu(f|\sigma(P_r))$. Recall that, using the notation of Proposition \ref{prop:ExplicitCondExp}, we have
\begin{equation}\label{eq:CondExpExplicit_2}
 \mathbb{E}_\mu(f|\sigma(P_r)) : x\mapsto \int_{\mathbb{R}^{d-r}} f(P_r x + U_\perp \xi_\perp) \,p_\perp(\xi_\perp| P_r x ) \,\mathrm{d}\xi_\perp .
\end{equation}
For any $x\in\mathbb{R}^d$, an approximation of $\mathbb{E}_\mu(f|\sigma(P_r))(x)$ can be obtained via a Monte Carlo estimate of the form $\frac{1}{M}\sum_{i=1}^M f\big(P_r x + U_\perp \Xi_i \big)$, where $\Xi_1,\ldots,\Xi_M \in\mathbb{R}^{d-r}$ are independent copies of the random vector $\Xi \sim p_\perp( \, \cdot  \, | P_r x )$. In general, the law of $\Xi$ depends on $x$ and we should redraw the samples $\Xi_1,\hdots,\Xi_M$ for each different value of $x$, which can be computationally demanding. 

Instead, we will consider an approximation of $\mathbb{E}_\mu(f|\sigma(P_r))$ of the form
\begin{equation}\label{eq:MonteCarloCond}
 \widehat F_r: x \mapsto \frac{1}{M}\sum_{i=1}^{M} f(P_r x + (I_d-P_r) Y_i ), 
\end{equation}
where $Y_1,\hdots,Y_M$ are independent samples from the prior $\mu$. The form of $\widehat{F}_r$ is motivated by situations where $X_r$ and $X_\perp$ are in fact independent under the prior, i.e., where $p_\perp (\cdot \vert P_r x) = p_\perp (\cdot)$. This occurs, for example, in the case of a Gaussian prior with covariance $\Gamma^{-1}$, since the projector $P_r$ is by construction $\Gamma$-orthogonal. In this case, $\widehat{F}_r$ is an unbiased estimator of $\mathbb{E}_\mu(f|\sigma(P_r))$.

From a computational perspective, the samples $Y_i$ will be drawn \emph{once} and reused for each evaluation of $x \mapsto \widehat F_r(x)$. We define $\widehat\nu_r$ to be the random probability distribution defined by
\begin{equation}\label{eq:defHatNuR}
 \frac{\mathrm{d}\widehat\nu_r}{\mathrm{d}\mu}\propto \widehat F_r.
\end{equation}
Notwithstanding the motivation above, the laws of $(I_d-P_r) Y_i$ and of $U_\perp \Xi_i$ are in general different, so that $\widehat F_r(x)$ is \emph{in general} a biased estimate of $\mathbb{E}_\mu(f|\sigma(P_r))(x)$. In this case we cannot hope for $\Dkl(\nu_r^* || \widehat\nu_r )$ to go to zero with $M$. In order to analyze this bias, let us introduce the probability distribution $\mu'$ such that
\begin{equation}\label{eq:defMuPrime}
 \int h ~\mathrm{d}\mu' = \mathbb{E}\big(h( P_r X + (I_d-P_r)Y )  \big),
\end{equation}
for any Borel function $h$, where $X$ and $Y$ are independent random variables with distribution $\mu$. Equation \eqref{eq:defMuPrime} is equivalent to saying that $\mu'$ is the probability distribution of the random vector $P_r X + (I_d-P_r)Y$. The following proposition is proven in Section \ref{proof:ExplicitCondExp_muPrime}.

\begin{proposition}\label{prop:ExplicitCondExp_muPrime}
 The conditional expectation $\mathbb{E}_{\mu'}(f|\sigma(P_r))$ is such that
 $$
  \mathbb{E}_{\mu'}(f|\sigma(P_r)) : x\mapsto \mathbb{E}\big( f(P_r x + (I_d-P_r)Y )  \big) .
 $$
\end{proposition}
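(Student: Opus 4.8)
The plan is to verify directly that the candidate function $F_r:x\mapsto \mathbb{E}\big(f(P_r x + (I_d-P_r)Y)\big)$, with $Y\sim\mu$, satisfies the two defining properties of the conditional expectation $\mathbb{E}_{\mu'}(f|\sigma(P_r))$ recorded in the variational characterization \eqref{eq:CondExp_VarForm}: that it is $\sigma(P_r)$-measurable, and that $\int F_r\, h\,\mathrm{d}\mu' = \int f\, h\,\mathrm{d}\mu'$ for every $\sigma(P_r)$-measurable function $h$. Since the conditional expectation is characterized uniquely by these two properties, establishing them identifies $F_r$ with $\mathbb{E}_{\mu'}(f|\sigma(P_r))$ and finishes the proof.

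For the measurability, I would observe that $F_r$ depends on $x$ only through $P_r x$. Indeed, using $P_r^2 = P_r$ one has $P_r x + (I_d-P_r)y = P_r(P_r x) + (I_d-P_r)y$, so $F_r = \widetilde F_r \circ P_r$ where $\widetilde F_r(z) = \mathbb{E}\big(f(P_r z + (I_d-P_r)Y)\big)$. By Lemma \ref{cor:Doob}, any function of the form $\widetilde F_r\circ P_r$ is $\sigma(P_r)$-measurable, which yields the first property.

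For the variational identity, the key algebraic fact is that the projector annihilates the complementary part of the mixture defining $\mu'$: writing $Z = P_r X + (I_d-P_r)Y$ with $X,Y$ independent and distributed according to $\mu$, the relations $P_r^2 = P_r$ and $P_r(I_d-P_r)=0$ give $P_r Z = P_r X$. Hence, for a $\sigma(P_r)$-measurable test function written via Lemma \ref{cor:Doob} as $h = \phi\circ P_r$, one has $h(Z) = \phi(P_r X)$ and likewise $F_r(Z) = \widetilde F_r(P_r X) = \int f(P_r X + (I_d-P_r)y)\,\mu(\mathrm{d}y)$. Substituting these into the definition \eqref{eq:defMuPrime} of $\mu'$ and using the independence of $X$ and $Y$ together with Fubini's theorem, both sides $\int F_r\, h\,\mathrm{d}\mu'$ and $\int f\, h\,\mathrm{d}\mu'$ reduce to the common expression $\mathbb{E}_X\big(\phi(P_r X)\,\mathbb{E}_Y(f(P_r X + (I_d-P_r)Y))\big)$, establishing the claim.

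I expect the only real subtlety to be bookkeeping: carefully distinguishing the two independent copies $X$ and $Y$ that enter $\mu'$, and recognizing that conditioning on $\sigma(P_r)$ under $\mu'$ amounts to fixing $P_r X$ while averaging over the independent $Y$. The identity $P_r Z = P_r X$ is precisely what makes this conditional averaging decouple cleanly; once it is in place, the remaining work is a routine application of Fubini's theorem and the uniqueness of the conditional expectation.
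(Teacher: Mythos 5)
Your proposal is correct and follows essentially the same route as the paper: both verify the variational characterization \eqref{eq:CondExp_VarForm} under $\mu'$ by writing the test function $h$ (via Lemma \ref{cor:Doob}) as a composition with $P_r$, exploiting the invariance $h(P_r x + (I_d-P_r)y) = h(x)$ (equivalently, your identity $P_r Z = P_r X$), and applying Fubini's theorem together with the independence of $X$ and $Y$. Your write-up merely makes the decoupling of the two prior copies slightly more explicit than the paper's chain of equalities.
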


By Proposition \ref{prop:ExplicitCondExp_muPrime} we can write $\mathbb{E}(\widehat F_r) = \mathbb{E}_{\mu'}(f|\sigma(P_r))$ which allows us to interpret $\widehat F_r$ as a Monte Carlo estimate of $\mathbb{E}_{\mu'}(f|\sigma(P_r))$. The following proposition gives a bound for the expectation of the error $\Dkl(\nu_r^*|| \widehat\nu_r)$. The proof is given in Section \ref{proof:DecompExpectKL}.

\begin{proposition}\label{prop:DecompExpectKL}
 Let $\nu_r'$ be the distribution such that $\frac{\mathrm{d}\nu_r'}{\mathrm{d}\mu} \propto \mathbb{E}_{\mu'}(f|\sigma(P_r))$ and assume that $f(x)>0$ for $\mu$-a.e.~$x$.
 We have
 \begin{align}\label{eq:DecompExpectKL}
  \mathbb{E}\big( \Dkl(\nu_r^*|| \widehat\nu_r) \big) 
  &\leq \Dkl(\nu_r^*|| \nu_r') 
  + \frac{1}{2} \, \mathbb{E} \int \Big(\frac{\widehat F_r - \mathbb{E}_{\mu'}(f|\sigma(P_r))}{\mathbb{E}_{\mu'}(f|\sigma(P_r))} \Big)^2 \mathrm{d}\nu_r^*  \nonumber \\ 
  & ~~~ + \mathbb{E}  \int \mathcal{O} \left( \Big( \frac{\widehat F_r(x) - \mathbb{E}_{\mu'}(f|\sigma(P_r))(x)}{\mathbb{E}_{\mu'}(f|\sigma(P_r))(x)} \Big)^3  \right)  \, \nu_r^* (\mathrm{d}x) .
 \end{align}
 
\end{proposition}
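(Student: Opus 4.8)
The plan is to interpose the auxiliary distribution $\nu_r'$ between $\nu_r^*$ and $\widehat\nu_r$ and to exploit that, by Proposition \ref{prop:ExplicitCondExp_muPrime}, $\widehat F_r$ is an \emph{unbiased} Monte Carlo estimate of $g' := \mathbb{E}_{\mu'}(f|\sigma(P_r))$, i.e.\ $\mathbb{E}(\widehat F_r)=g'$ pointwise. Writing the three densities with respect to $\mu$ as $\frac{\mathrm{d}\nu_r^*}{\mathrm{d}\mu}=g^*/Z^*$, $\frac{\mathrm{d}\nu_r'}{\mathrm{d}\mu}=g'/Z'$ and $\frac{\mathrm{d}\widehat\nu_r}{\mathrm{d}\mu}=\widehat F_r/\widehat Z$ (with $g^*=\mathbb{E}_\mu(f|\sigma(P_r))$ and $Z^*,Z',\widehat Z$ the normalizing constants), additivity of the logarithm of Radon--Nikodym derivatives gives the exact identity
$$
 \Dkl(\nu_r^* || \widehat\nu_r) = \Dkl(\nu_r^* || \nu_r') + \int \log\frac{\mathrm{d}\nu_r'}{\mathrm{d}\widehat\nu_r}\,\mathrm{d}\nu_r^*.
$$
Since $\frac{\mathrm{d}\nu_r'}{\mathrm{d}\widehat\nu_r}=(g'/\widehat F_r)(\widehat Z/Z')$ and the second factor is independent of the integration variable, the remaining term splits as $\int \log(g'/\widehat F_r)\,\mathrm{d}\nu_r^* + \log(\widehat Z/Z')$. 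The deterministic term $\Dkl(\nu_r^* || \nu_r')$ is already the first term of the claimed bound, so it remains to take expectations over the prior samples $Y_1,\dots,Y_M$ and to control the two surviving pieces.

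For the normalizing-constant piece I would use $\mathbb{E}(\widehat Z)=\mathbb{E}\int\widehat F_r\,\mathrm{d}\mu=\int\mathbb{E}(\widehat F_r)\,\mathrm{d}\mu=\int g'\,\mathrm{d}\mu=Z'$, which follows from unbiasedness and Fubini. Concavity of the logarithm and Jensen's inequality then yield $\mathbb{E}\log(\widehat Z/Z')=\mathbb{E}\log\widehat Z-\log Z'\leq \log\mathbb{E}(\widehat Z)-\log Z'=0$, so this piece contributes nothing to the upper bound. This is where the inequality (rather than equality) in the statement enters.

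For the remaining piece I would set $\epsilon(x)=(\widehat F_r(x)-g'(x))/g'(x)$, so that $\log(g'/\widehat F_r)=-\log(1+\epsilon)$, and Taylor-expand $-\log(1+\epsilon)=-\epsilon+\tfrac12\epsilon^2+\mathcal{O}(\epsilon^3)$. After exchanging $\mathbb{E}$ with $\int\cdot\,\mathrm{d}\nu_r^*$, the decisive cancellation is that the first-order term vanishes pointwise, $\mathbb{E}(\epsilon(x))=(\mathbb{E}\widehat F_r(x)-g'(x))/g'(x)=0$, again by $\mathbb{E}(\widehat F_r)=g'$. What then survives is precisely $\tfrac12\int\mathbb{E}(\epsilon^2)\,\mathrm{d}\nu_r^*$ together with the cubic remainder, matching the two remaining terms of the statement once $\mathbb{E}$ and the $\nu_r^*$-integral are reordered.

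The main obstacle is the rigorous justification of the Taylor expansion and the accompanying interchanges of expectation and integration: one needs $\widehat F_r$ to remain uniformly bounded away from zero (equivalently $|\epsilon|<1$) on the support of $\nu_r^*$, or appropriate moment control on $\epsilon$, to render the remainder genuinely $\mathcal{O}(\epsilon^3)$ and to license the applications of Fubini. In keeping with the formal $\mathcal{O}$-notation already present in the statement, I would carry these as mild regularity hypotheses on $f$ and on the likelihood ratio $\widehat F_r/g'$, rather than establish integrability from first principles.
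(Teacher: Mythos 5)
Your proposal is correct and follows essentially the same route as the paper's proof: the same decomposition of $\Dkl(\nu_r^*||\widehat\nu_r)$ through $\nu_r'$, the same Jensen argument showing $\mathbb{E}\log(\widehat Z/Z')\leq 0$ via $\mathbb{E}(\widehat Z)=Z'$, and the same second-order Taylor expansion of the logarithm with the first-order term killed by the unbiasedness $\mathbb{E}(\widehat F_r)=\mathbb{E}_{\mu'}(f|\sigma(P_r))$. The paper likewise treats the expansion formally, carrying the cubic remainder inside the $\mathcal{O}(\cdot)$ notation rather than justifying the interchanges rigorously.
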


Proposition \ref{prop:DecompExpectKL} shows that, up to a third order term, the expectation of the error $\Dkl(\nu_r^*|| \widehat\nu_r)$ is bounded by the sum of two terms. The first term is the Kullback--Leibler divergence from $\nu_r'$ to $\nu_r^*$ and corresponds to the bias $\mathbb{E}(\widehat F_r) \neq \mathbb{E}_{\mu}(f|\sigma(P_r))$. The second term can be interpreted as a measure of the variance of $\widehat F_r$. Under some assumptions on the distribution $\mu$, the following proposition provides an upper bound for those two terms.

\begin{proposition}\label{prop:controlMC}
 
In addition to the assumptions of Proposition \ref{prop:DecompExpectKL}, assume that $\mu$ admits a Lebesgue density $\rho \propto \exp(-V -\Psi )$, where $\Psi$ is a bounded function such that $ \exp( \sup \Psi - \inf \Psi ) \leq \kappa$ and where $V:x \mapsto \frac{1}{2}\|x-m\|_{\Gamma}^2$ for some $m\in\mathbb{R}^d$ and some symmetric positive-definite matrix $\Gamma\in\mathbb{R}^{d\times d}$. Then, for any $\|\cdot\|_{\Gamma}$-orthogonal projector $P_r$, we have
 \begin{align}
  \Dkl(\nu_r^*||\nu_r') & \leq \kappa^7 (\kappa^2-1) \mathcal{E}_\Gamma(P_r,f), \label{eq:controlMC_Bais}
 \end{align}
and
 \begin{align}
  \frac{1}{2} \, \mathbb{E} \int \Big(\frac{\widehat F_r - \mathbb{E}_{\mu'}(f|\sigma(P_r))}{\mathbb{E}_{\mu'}(f|\sigma(P_r))} \Big)^2 \mathrm{d}\nu_r^* &  \leq \frac{\kappa^7}{2M} \mathcal{E}_\Gamma(P_r,f),  \label{eq:controlMC_MeanSquaredError}
 \end{align}
 where
 \begin{equation}\label{eq:defE}
  \mathcal{E}_\Gamma(P_r,f) = \int \|(I_d-P_r^T )\nabla \log f  \|_{\Gamma^{-1}}^2 \frac{f}{\mathbb{E}_{\mu}(f|\sigma(P_r))} \mathrm{d}\nu.
 \end{equation}
 
\end{proposition}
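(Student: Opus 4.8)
The plan is to prove the two displayed inequalities separately, both resting on the decomposition of $\mu$ into its Gaussian backbone $e^{-V}$ and the bounded perturbation $e^{-\Psi}$. Throughout I abbreviate $A := \mathbb{E}_\mu(f|\sigma(P_r))$ and $B := \mathbb{E}_{\mu'}(f|\sigma(P_r))$, recalling from Proposition \ref{prop:ExplicitCondExp_muPrime} that $B(x)=\mathbb{E}(f(P_r x+(I_d-P_r)Y))$ and hence $\mathbb{E}(\widehat F_r)=B$. The crucial structural fact is that, because $P_r$ is $\|\cdot\|_\Gamma$-orthogonal, the quadratic $V=\tfrac12\|x-m\|_\Gamma^2$ splits with no cross term as $V_r(P_rx)+V_\perp((I_d-P_r)x)$; consequently, were $\Psi$ constant (i.e.\ $\kappa=1$), the blocks $P_rX$ and $(I_d-P_r)X$ would be independent under $\mu$, giving $\mu=\mu'$, $A=B$, and a vanishing bias. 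This is exactly why the factor $(\kappa^2-1)$ must appear in \eqref{eq:controlMC_Bais}: it quantifies the coupling that $\Psi$ introduces between the two blocks.

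For the variance bound \eqref{eq:controlMC_MeanSquaredError} I would first note that, the $Y_i$ being i.i.d., $\mathbb{E}\big((\widehat F_r(x)-B(x))^2\big)=\tfrac1M\,\mathrm{Var}_Y\big(f(P_rx+(I_d-P_r)Y)\big)$. Applying the Poincaré inequality for $\mu$ (valid under Assumption \ref{assu:convexityOfV}, with constant inflated by at most $\kappa$ through Holley--Stroock) to the map $y\mapsto f(P_rx+(I_d-P_r)y)$, whose gradient is $(I_d-P_r^T)\nabla f(P_rx+(I_d-P_r)y)$, bounds this variance by $\kappa\int\|(I_d-P_r^T)\nabla f\|_{\Gamma^{-1}}^2\,\mathrm d\mu$ along the perpendicular fibre. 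Writing $\nabla f=f\,\nabla\log f$ turns the integrand into $f^2\|(I_d-P_r^T)\nabla\log f\|_{\Gamma^{-1}}^2$, and by Proposition \ref{prop:ExplicitCondExp_muPrime} this fibre integral is precisely $\mathbb{E}_{\mu'}\big(f^2\|(I_d-P_r^T)\nabla\log f\|_{\Gamma^{-1}}^2\,\big|\,\sigma(P_r)\big)$. After dividing by $B^2$ and integrating against $\nu_r^*$, it remains to replace the $\mu'$-conditional expectation and the denominator $B$ by their $\mu$-counterparts $A$, so as to reproduce $\mathcal{E}_\Gamma(P_r,f)=Z_1^{-1}\int \|(I_d-P_r^T)\nabla\log f\|_{\Gamma^{-1}}^2\,f^2/A\,\mathrm d\mu$. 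This substitution costs a bounded number of density-ratio factors; the key estimate, derived from the splitting of $V$ together with $\exp(\sup\Psi-\inf\Psi)\le\kappa$, is that the likelihood ratio $w$ between the perpendicular conditional density $p_\perp(\cdot|P_rx)$ and the perpendicular marginal lies in $[\kappa^{-2},\kappa^2]$. Collecting these factors yields the constant $\kappa^7$.

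The bias bound \eqref{eq:controlMC_Bais} is the harder half. Here I would pass from the Kullback--Leibler divergence to the $\chi^2$-divergence, $\Dkl(\nu_r^*\|\nu_r')\le\chi^2(\nu_r^*\|\nu_r')$, and expand the latter using $\mathrm d\nu_r^*/\mathrm d\mu\propto A$ and $\mathrm d\nu_r'/\mathrm d\mu\propto B$; after cancelling the normalising constants one is left with a multiple of $\int (A-B)^2/B\,\mathrm d\mu$ (the squared-normalisation term only helps). The difference $A-B$ admits a covariance representation: at fixed $P_rx$, since $w$ integrates to one against the perpendicular marginal, $A-B=\mathrm{Cov}_\perp(f,w)$, the covariance under that marginal between $f$ and the same ratio $w$ as above. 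Cauchy--Schwarz then factors $(A-B)^2\le \mathrm{Var}_\perp(f)\cdot\mathrm{Var}_\perp(w)$. The first factor is controlled by a Poincaré inequality for the perpendicular marginal (again a $\kappa$-perturbed Gaussian), producing $f^2\|(I_d-P_r^T)\nabla\log f\|_{\Gamma^{-1}}^2$ and hence, after the same change of conditioning as before, the quantity $\mathcal{E}_\Gamma(P_r,f)$. The second factor is exactly $\mathrm{Var}_\perp(w)=\mathbb{E}_\perp(w^2)-1$, and since $w\le\kappa^2$ with $\mathbb{E}_\perp(w)=1$ this is at most $\kappa^2-1$. Multiplying the two and tracking the residual density-ratio factors gives $\kappa^7(\kappa^2-1)\,\mathcal{E}_\Gamma(P_r,f)$.

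The main obstacle is the two-sided control $w\in[\kappa^{-2},\kappa^2]$ of the likelihood ratio between the perpendicular conditional and marginal densities, together with the covariance identity $A-B=\mathrm{Cov}_\perp(f,w)$ on which the bias bound hinges; establishing the ratio bound requires carefully exploiting the separability of $V$ so that the Gaussian normalisers cancel and only four $e^{-\Psi}$ factors survive, each contributing one power of $\kappa$. The variance bound is comparatively routine once the Poincaré step and the $\mu\leftrightarrow\mu'$ conditioning dictionary are in place; it is the delicate bookkeeping of the $\kappa$-powers, rather than any single analytic difficulty, that makes the final constants $\kappa^7$ and $\kappa^7(\kappa^2-1)$ nontrivial to pin down.
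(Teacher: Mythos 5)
Your proposal follows essentially the same route as the paper: the bias is handled by the sharpened KL-to-$\chi^2$ bound (Lemma \ref{lem:boundKLbyChi2}), by the covariance/Cauchy--Schwarz representation of $\mathbb{E}_\mu(f|\sigma(P_r))-\mathbb{E}_{\mu'}(f|\sigma(P_r))$ with the fibre $\chi^2$-divergence bounded by $\kappa^2-1$ (these are exactly Lemmas \ref{lem:bound_Expectation_Chi2} and \ref{lem:Chi2control}, including your two-sided ratio control $w\in[\kappa^{-2},\kappa^2]$ from the separability of $V$), and by a Poincar\'e-type inequality producing the term $f^2\|(I_d-P_r^T)\nabla\log f\|_{\Gamma^{-1}}^2$, while the variance bound uses the i.i.d.\ structure of the $Y_i$ plus the same ingredients. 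The one step you leave asserted rather than derived is the bookkeeping that converts the natural centering and denominator $\mathbb{E}_{\mu'}(f|\sigma(P_r))$ into the $\mathbb{E}_{\mu}(f|\sigma(P_r))$ appearing in $\mathcal{E}_\Gamma(P_r,f)$ --- the paper does this via the best-approximation property of the conditional expectation in $L^2_{\mu'}$ (steps \eqref{eq:tmp123_1}--\eqref{eq:tmp123_2}) before invoking the subspace Poincar\'e inequality --- and while your fibre-wise Poincar\'e variant does reproduce the constants $\kappa^7$ and $\kappa^7(\kappa^2-1)$ under a careful ordering of the density-ratio substitutions, a sloppier ordering would inflate them.
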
 

The proof is given is Section \ref{proof:controlMC}. Proposition \ref{prop:controlMC} requires $\mu$ to be a bounded perturbation of a Gaussian distribution $\mathcal{N}(m,\Gamma^{-1})$. This is a stronger assumption than Assumption \ref{assu:convexityOfV}. 
Neglecting the third order term in \eqref{eq:DecompExpectKL}, Proposition \ref{prop:controlMC} allows one to bound $\mathbb{E}( \Dkl(\nu_r^*|| \widehat\nu_r) )$ by $(C_1 + \frac{C_2}{M} ) \mathcal{E}_\Gamma(P_r,f) $ where $C_1$ and $C_2$ are two constants which depend only on $\kappa$. If $\mu$ is Gaussian ($\kappa=1$) then $C_1 = 0$ and $C_2 = 1/2$. In that case, $\mathbb{E}( \Dkl(\nu_r^*|| \widehat\nu_r) )$ goes to zero with $M$. This is not surprising because $\mu'=\mu$ holds whenever $\mu$ is Gaussian,\footnote{Recall that $P_r$ is an orthogonal projector with respect to the norm induced by the precision matrix of $\mu$.} so that $\nu_r^* = \nu_r'$ and hence $\Dkl(\nu_r^*||\nu_r')=0$. In the general case $\kappa\neq 1$, \eqref{eq:controlMC_Bais} and \eqref{eq:controlMC_MeanSquaredError} show that both the variance and the bias of $\widehat F_r$ are no greater than a constant independent of $P_r$ times $\mathcal{E}_\Gamma(P_r,f)$. Note that the quantity $\mathcal{E}_\Gamma(P_r,f)$ differs from the reconstruction error $\mathcal{R}_{\Gamma}(P_r,H)$ only by the term $f \, \mathbb{E}_{\mu}(f|\sigma(P_r))^{-1}$; compare \eqref{eq:reconstructionError} and \eqref{eq:defE}. Provided that this term is uniformly bounded from above, we can write
$$
 \mathcal{E}_\Gamma(P_r,f) \leq \Big(\sup \frac{f}{\mathbb{E}_{\mu}(f|\sigma(P_r))} \Big) \mathcal{R}_{\Gamma}(P_r,H).
$$
The above relation shows that the error $\mathbb{E}( \Dkl(\nu_r^*|| \widehat\nu_r) )$ can be controlled by the reconstruction error $\mathcal{R}_{\Gamma}(P_r,H)$, provided the supremum of $f\,\mathbb{E}_{\mu}(f|\sigma(P_r))^{-1}$ is finite. Then a small number of samples $M$ will be enough to guarantee that $\Dkl(\nu_r^*|| \widehat\nu_r)$ is, in expectation, of the same order of magnitude as the first term $\Dkl(\nu|| \nu_r^*)$ in the decomposition \eqref{eq:DecompKL}, which was our initial motivation. We now give an example for which the term $\sup \left ( f / \mathbb{E}_{\mu}(f|\sigma(P_r)) \right )$ decreases to one with the rank of the projector.

\begin{example}\label{rem:SupF/EF}
 As in Section \ref{sec:SimpleExample}, we again consider $\mu=\mathcal{N}(0,I_d)$ and $f:x\mapsto \exp(-\frac{1}{2}x^T Ax)$ for some symmetric matrix $A\succeq 0$. By applying Proposition \ref{prop:bestPr}, the projector that minimizes $P_r\mapsto \mathcal{R}_{\Gamma}(P_r,H)$ is $P_r=\sum_{i=1}^r v_i v_i^T $ where $v_i$ is the $i$-th eigenvector of $A$ and let $\Sigma=(I_d+A)^{-1}$ and $\Sigma_r=(I_d+P_rAP_r)^{-1}$. Using the closed-form expression \eqref{eq:CondExpGaussianCase} for $\mathbb{E}_{\mu}(f|\sigma(P_r))$ and since $A-P_r A P_r = (I_d-P_r)A(I_d-P_r)\succeq0$, we have
 \begin{align*}
  \sup \frac{f}{\mathbb{E}_{\mu}(f|\sigma(P_r))} &= \sup_{x\in\mathbb{R}^d} \frac{\exp(-\frac{1}{2}x^T (A-P_rAP_r)x )}{ \mathrm{det}(\Sigma_r^{-1}\Sigma)^{1/2} }  \\
  &= \frac{1}{ \mathrm{det}(\Sigma_r^{-1}\Sigma)^{1/2}} 
  = \prod_{i=r+1}^d (1+\alpha_i)^{1/2} ,
 \end{align*}
 where $\alpha_i\geq0$ is the $i$-th eigenvalue of $A$. This shows that, in this example, the supremum of $f ~ \mathbb{E}_{\mu}(f|\sigma(P_r))^{-1}$ goes monotonically to one with the rank $r$ of $P_r$. 
 
\end{example}

\begin{remark}
In the way they are presented, the error analyses of the projector and of the conditional expectation (i.e., the two terms on the right-hand side of \eqref{eq:DecompKL}) are not unified: the first term is bounded in high probability \eqref{eq:boundKLoverall}, while the second is controlled in expectation (via Proposition~\ref{prop:DecompExpectKL} and subsequent discussion). One could unify these results, for instance by using a Markov inequality to control the probability that $\Dkl(\nu_r^*||\widehat\nu_r)$ exceeds a certain value. However, we believe this step is not essential to the purpose of this section, which is to show that the associated sample approximations are feasible and sound.
\end{remark}

\subsection{Algorithms}\label{sec:alg}

\subsubsection{Ideal algorithm}

Algorithm \ref{algo:ideal} can be used to construct an approximation $\widehat\nu_r$ of the posterior distribution $\nu$. It assumes that we can draw samples from the posterior distribution $\nu$.
Since this is typically not possible in practice, this algorithm is called ``ideal.''

\begin{algorithm}[h]
\caption{Ideal algorithm}
  \begin{algorithmic}[1]
\Require{Likelihood function $f$, prior distribution $\mu$, error threshold $\varepsilon$, sample sizes $K$ and $M$.}
\State Draw $K$ independent samples $X_1,\ldots,X_K$ from $\mathrm{d}\nu\propto f \mathrm{d}\mu$
\State Compute $\nabla\log f(X_k)$ for $k = 1, \ldots, K$
\State Assemble the matrix $\widehat H=\frac{1}{K} \sum_{k=1}^K \big( \nabla \log f(X_k) \big) \big( \nabla \log f( X_k ) \big)^T$ 
\State Compute a rank-$r$ projector $P_r$ with the smallest rank such that $\mathcal{R}_{\Gamma}(P_r, \widehat H) \leq \varepsilon$
\State Draw $M$ samples $Y_1,\ldots,Y_M$ from $\mu$
\State Return the approximate distribution $\widehat\nu_r$ defined by
\[
\frac{\text{d}\widehat\nu_r}{\text{d}\mu}\propto\widehat F_r, \textrm{\quad where \;\;} \widehat F_r = \frac{1}{M} \sum^{M}_{i = 1} \left(P_r x + (I_d-P_r)Y_i \right)
\]
  \end{algorithmic}
\label{algo:ideal}
\end{algorithm}

\subsubsection{Construction using approximate measures}

Obtaining samples from the posterior distribution $\nu$ at step 1 of Algorithm \ref{algo:ideal} can be difficult in practice; indeed, this challenge is one of the motivations for the dimension reduction approach presented here. To alleviate this difficulty, we can construct $\widehat H$ using samples from another distribution $\widetilde\nu$ that can be directly simulated.
By the definition \eqref{eq:defH} of $H$, we have
\begin{align*}
 \mathcal{R}_{\Gamma}(P_r,H) & = \int \| (I_d - P_r^T) \nabla \log f \|_{\Gamma^{-1}}^2   \frac{\text{d}\nu}{\text{d}\widetilde\nu} \text{d}\widetilde\nu \\
 & \leq \big(  \sup \frac{\text{d}\nu}{\text{d}\widetilde\nu} \big) \int \| (I_d - P_r^T) \nabla \log f \|_{\Gamma^{-1}}^2  \text{d}\widetilde\nu ,
\end{align*}
so that, thanks to Corollary \ref{cor:KL_bound}, we can write
\begin{equation}\label{eq:BoundHTilde}
 \Dkl( \nu || \nu_r^* ) \leq \big(  \sup \frac{\text{d}\nu}{\text{d}\widetilde\nu} \big) \frac{\kappa }{2} \mathcal{R}_{\Gamma}(P_r,\widetilde H)
 ,\quad\text{where}\quad
 \widetilde H = \int(\nabla\log f)(\nabla\log f)^T \text{d}\widetilde\nu,
\end{equation}
for any projector $P_r$. Assuming $\widetilde\nu$ satisfies $\sup \frac{\text{d}\nu}{\text{d}\widetilde\nu} <+\infty$ (a common assumption in importance sampling \cite{mcbook,robert2013monte}), the above relation suggests that minimizing the approximate reconstruction error $P_r\mapsto\mathcal{R}_{\Gamma}(P_r,\widetilde H)$ can yield suitable projectors. In particular if $\widetilde H$ is rank deficient, then $\mathcal{R}_{\Gamma}(P_r,\widetilde H)=0$ and thus $\Dkl( \nu || \nu_r^* )=0$ for a suitable projector $P_r$ with $r=\text{rank}(\widetilde H)$. In general, however, the constant $\sup \frac{\text{d}\nu}{\text{d}\widetilde\nu}$ will be unknown in practice, and we will not be able to use \eqref{eq:BoundHTilde} as a bound for the error $\Dkl( \nu || \nu_r^* )$.

By drawing samples from $\widetilde \nu$ rather than from $\nu$ at step 1 of Algorithm~\ref{algo:ideal}, the matrix $\widehat H$ computed at step 2 is a Monte Carlo approximation of $\widetilde H$. Proposition \ref{prop:ControlReconstructionError} still applies when replacing $\nu$ by $\widetilde\nu$. Then for $K$ sufficiently large ($K=\mathcal{O}(\text{rank}(\widetilde H))$, for instance), it holds with high probability that any rank-$r$ projector that minimizes $P_r\mapsto\mathcal{R}_{\Gamma}(P_r,\widehat H)$ will be a quasi-optimal solution to the minimization problem of $P_r\mapsto\mathcal{R}_{\Gamma}(P_r,\widetilde H)$; see the discussion of Section \ref{sec:ConstructionOfTheProjector}.
Here we list two common choices of $\widetilde\nu$. 
\begin{enumerate}
  \item {\bf Laplace approximation}. The Laplace approximation \cite{schillings2020convergence} constructs a Gaussian distribution $\widetilde\nu = \mathcal{N}(\widetilde m,\widetilde \Sigma)$, where $\widetilde m$ is the mode of $\nu$ and the matrix $\widetilde\Sigma^{-1}$ is obtained from Hessian of the negative log density of $\nu$ evaluated at $\widetilde m$. Note that if $\nu$ is Gaussian then $\widetilde\nu$ is exactly $\nu$. Even though Laplace's method is a simple way to obtain a Gaussian approximation of $\nu$, there is no guarantee that $\sup \frac{\text{d}\nu}{\text{d}\widetilde\nu}$ is finite in general.
  
  \item {\bf Prior distribution}. Sampling from the prior distribution $\mu$ is usually tractable. With the choice $\widetilde\nu=\mu$, we have $\frac{\text{d}\nu}{\text{d}\widetilde\nu} = \frac{\text{d}\nu}{\text{d}\mu} =\frac{f}{\int f\text{d}\mu}$. For most applications, the likelihood function $f$ is bounded so that $\sup \frac{\text{d}\nu}{\text{d}\widetilde\nu} <\infty$. Note that this choice has been considered in \cite{constantine2016accelerating}.
  
\end{enumerate}

\subsubsection{Iterative construction}\label{sec:iterativeAlgo}

As suggested by \cite{cui2016scalable}, rather than limiting ourselves to a fixed approximation $\widetilde H$, we can approximate the true $H$ \eqref{eq:defH} using a sequential importance sampling framework. Let us consider a sequence of posterior approximations $\widehat\nu_r^{(0)},\hdots,\widehat\nu_r^{(L)}$ where $\widehat\nu_r^{(0)}=\mu$ and, for any $1\leq l \leq L$, the distribution $\widehat\nu_r^{(l)}$ is associated with a projector $P_r^{(l)}$ by
$$
\frac{\text{d}\widehat\nu_r^{(l)}}{\text{d}\mu \;\;\;} \propto \widehat F_r^{(l)}, 
\quad\text{where}\quad
\widehat F_r^{(l)}: x\mapsto \frac{1}{M} \sum^{M}_{i = 1} f\left(P_r^{(l)} x + (I_d-P_r^{(l)})Y_i\right).
$$
Notice that the same samples $Y_i$ are used for every $1\leq l \leq L$. The idea is to use $\widehat\nu_r^{(l)}$ as a biasing  distribution for the estimation of $H$. Let $X_1^{(l)}, \ldots, X_K^{(l)} $ be $K$ independent samples from $\widehat\nu_r^{(l)}$. We can write
\[
H = \int \big( \nabla \log f(x) \big) \big( \nabla \log f( x ) \big)^T \frac{\text{d}\nu}{\text{d}\widehat\nu_r^{(l)}} \text{d}\widehat\nu_r^{(l)},
\]
so that
\[
\widehat H^{(l)}= \frac{1}{\sum_{k =1 }^{K} w_k^{(l)}} \sum_{k=1}^K w_k^{(l)} \big( \nabla \log f(X_k^{(l)}) \big) \big( \nabla \log f( X_k^{(l)} ) \big)^T,
\quad
w_k^{(l)} = \frac{f(X_k^{(l)})}{\widehat F_r^{(l)} (X_k^{(l)})}
\]
is a self-normalized importance sampling estimator of $H$. Having computed $\widehat H^{(l)}$, the $(l+1)$-th projector $P_r^{(l+1)}$ is defined as a projector with minimal rank such that the approximate reconstruction error $\mathcal{R}_{\Gamma}(P_r^{(l+1)},\widehat H^{(l)})$ is below some prescribed tolerance. 

This iterative construction is detailed in Algorithm \ref{algo:iter_algo}.
Notice that at the first iteration, the importance weights $w_k^{(0)}$ are set to one. We make this choice in order to avoid the potential degeneracy (i.e., large variance of the weights) that might occur when $\mu$ is a poor approximation of $\nu$.
Also, our implementation includes a constraint on the rank of the projector, so that it cannot exceed a user-defined maximum rank $r_{\text{max}}$.
By doing so, we avoid any explosion of the rank in the earlier stages of the algorithm, i.e., when a poor posterior approximation might yield a crude approximation of $H$.
We emphasize that this algorithm only involves sampling from the low-dimensional posterior approximations $\{ \widehat{\nu}_r^{(l)} \}$.

\begin{algorithm}[h]
\caption{Iterative algorithm}
  \begin{algorithmic}[1]
\Require{Likelihood function $f$, prior distribution $\mu$, threshold $\varepsilon$, sample sizes $K$ and $M$, maximum number of iterations $L$, maximum rank $r_{\text{max}}$}
\State Draw $M$ samples $Y_1,\ldots,Y_M$ from $\mu$
\For{$l = 0, \ldots, L$}
\If{$l = 0$}
\State Draw $K$ samples $X_1^{(l)},\ldots,X_K^{(l)}$ from $\mu$
\State Compute $\nabla\log f(X_k^{(l)})$ and set the weights $w_k^{(l)} = 1$ for $k = 1 ,\ldots, K$
\Else
\State Draw $K$ samples $X_1,\hdots,X_K$ from $\widehat\nu_r^{(l)}$ using any MCMC algorithm
\State Compute $\nabla\log f(X_k^{(l)})$ and $w_k^{(l)} = \frac{f(X_k^{(l)})}{\widehat F_r^{(l)} (X_k^{(l)})}$ for $k = 1 ,\ldots, K$
\EndIf
\State Assemble the matrix 
\[
\widehat H^{(l)}= \frac{1}{\sum_{k =1 }^{K} w_k^{(l)}} \sum_{k=1}^K w_k^{(l)} \big( \nabla \log f(X_k^{(l)}) \big) \big( \nabla \log f( X_k^{(l)} ) \big)^T
\]
\State Compute the lowest rank $r_\varepsilon$ such that $\mathcal{R}_{\Gamma}(P, \widehat H_{\,}^{(l)}) \leq \varepsilon$ for some rank-$r_\varepsilon$ projector $P$
\State Put $r = \min(r_{\text{max}}, r_\varepsilon)$ and form the rank-$r$ projector $P_r^{(l+1)}$ which minimizes $P\mapsto\mathcal{R}_{\Gamma}(P, \widehat H_{\,}^{(l)})$
\State Define the approximate distribution $\widehat\nu_r^{(l+1)}$ as
\[
\frac{\text{d}\widehat\nu_r^{(l+1)}}{\text{d}\mu}\propto\widehat F_r^{(l+1)}, 
\textrm{\quad where \;\;}
\widehat F_r^{(l+1)} :x\mapsto \frac{1}{M} \sum^{M}_{i = 1} f\left(P_r^{(l+1)} x + (I_d-P_r^{(l+1)})Y_i\right)
\]
\EndFor
\State Return the approximate distribution $\widehat\nu_r^{(L+1)}$.
  \end{algorithmic}
\label{algo:iter_algo}
\end{algorithm}

\section{Alternative approaches to dimension reduction}\label{sec:4}

\subsection{Karhunen--Loève-based dimension reduction}\label{sec:KarhunenLoeve}

The Karhunen--Loève decomposition is a simple and powerful tool for reducing the dimension of a given random vector $X\in\mathbb{R}^d$. Letting $m=\mathbb{E}(X)$, this method exploits the fact that $X-m$ may take values mostly on a low-dimensional subspace of $\mathbb{R}^d$, so that $X-m$ can be well approximated by $P_r(X-m)$ for some low-rank projector $P_r \in\mathbb{R}^{d\times d}$. 
The standard approach is to seek $P_r$ such that the mean squared error $\mathbb{E}( \| (X-m) - P_r (X-m) \|_2^2 )$ is below some prescribed tolerance. We can write
$$
 \mathbb{E}( \| (X-m) - P_r (X-m) \|_2^2 ) = \text{trace}((I_d-P_r)\Sigma(I_d-P_r^T)) ,
$$
where $\Sigma = \mathbb{E}((X-m)(X-m)^T)$ is the covariance matrix of $X$. Using Proposition \ref{prop:bestPr}, we have that the orthogonal projector onto the $r$-dimensional leading eigenspace of $\Sigma$ is a minimizer of the mean squared error over the set of rank-$r$ projectors. Furthermore we have
$$
 \min_{\substack{ P_r\in\mathbb{R}^{d\times d}, \\ \text{rank-$r$ projector}}} \mathbb{E}( \|  (X-m) - P_r (X-m)   \|_2^2 ) = \sum_{i=r+1}^d \sigma_i^2,
$$
where $\sigma_i^2$ is the $i$-th eigenvalue of $\Sigma$. This relation shows that a strong decay in the spectrum of $\Sigma$ ensures the mean squared error can be arbitrarily small for some $r\ll d$. The eigenvectors of $\Sigma$ are called the Karhunen--Loève modes of $X$ and $m + P_r (X-m)$ corresponds to the truncated Karhunen--Loève decomposition of $X$.

\smallskip

This methodology can be used to approximate the prior distribution $\mu$. Assuming $X\sim\mu$ has mean $m=\mathbb{E}(X)$ and covariance matrix $\Sigma$, let $\mu_r$ be the distribution of $m+P_r (X-m)$, where $P_r$ is the rank-$r$ orthogonal projector onto the dominant eigenspace of the prior covariance matrix $\Sigma$. As proposed in \cite{li2006efficient,marzouk2009dimensionality,li2015note}, we can then introduce the approximate posterior $\tilde\nu_r$ such that 
$$
 \frac{\text{d}\tilde\nu_r}{\text{d}\mu_r} \propto f.
$$
This Karhunen--Loève-based dimension reduction and the dimension reduction method primarily considered in this paper yield \emph{different approximation formats} for the posterior measure: the latter considers approximate measures that are fully supported on $\mathrm{supp}(\mu)$ (which is convex by Assumption \ref{assu:convexityOfV}), while the former seeks an approximate measure that is supported on $\mathrm{supp}(\mu_r) = \mathrm{supp}(\mu) \cap \{m + \text{Im}(P_r)\}$. The difference in support makes the two approximations hard to compare since, by construction, the divergence $\Dkl( \nu || \nut_r)$ is infinite. This is hardly surprising, as Karhunen--Loève dimension reduction does not try to minimize $\Dkl( \nu \vert\vert \nut_r)$, but rather focuses on the mean squared error $\Ex(\| (X-m) - P_r(X-m) \|^2)$, with $X \sim \mu$.

From a computational perspective, Karhunen--Loève dimension reduction relies only on the prior measure $\mu$: there is no need to compute expectations over the posterior measure $\nu$ or to evaluate gradients of the likelihood function, and hence the resulting approximation is easy to compute. Yet these computational advantages highlight some fundamental limitations of the method. The efficiency of the reduction strategy is limited to cases where there is a sharp decay in the eigenvalues of the prior covariance matrix $\Sigma$; the method does not exploit any low-dimensional structure that the likelihood function might have.

\begin{remark}
 It is also possible to apply the Karhunen--Loève dimension reduction method to the posterior measure $\nu$. In that case, we seek an approximation of $\nu$ defined as the distribution of $m+P_r(X-m)$ for some projector $P_r$, where $X\sim\nu$ follows the posterior distribution and $m=\mathbb{E}(X)$ is the posterior mean. As before, the projector can be defined as the minimizer of $P_r\mapsto\mathbb{E}(\|(X-m)-P_r(X-m)\|_2^2)$ for $X \sim \nu$, which turns out to be an orthogonal projector onto the leading eigenspace of the posterior covariance. This approach is no longer an \textit{a priori} method since it requires computing an integral over the posterior distribution.
\end{remark}

\subsection{Likelihood-informed subspace}

The likelihood-informed subspace (LIS) reduction method \cite{cui2014likelihood} leverages optimality results available for the Bayesian linear--Gaussian model \cite{spantini2015optimal} to propose a structure-exploiting approximation of the posterior distribution. The method assumes the prior to be Gaussian $\mu = \mathcal{N}(m, \Sigma )$ and the likelihood function $f$, up to a multiplicative constant, to have the following form:
$$
  f:x\mapsto \exp\Big( -\frac{1}{2} \| G(x) - y \|_{\Sigma_\text{obs}^{-1}}^2 \Big).
$$
Here $G$ is a suitably regular forward operator, $y$ is the observed data vector, and $\Sigma_\text{obs}$ is the covariance matrix of the observational noise which is assumed to be additive and Gaussian.
The LIS reduction approximates the posterior $\nu$ by $\nulis_r$, defined as 
$$
  \frac{\mathrm{d}\nulis_r}{ \mathrm{d} \mu}(x) \propto f( P_r x + (I_d-P_r)m ).
$$
Here $P_r$ is a projector that is self--adjoint with respect to the inner product induced by $\Sigma^{-1}$ and whose range is spanned by the leading generalized eigenvectors of the matrix pencil $(\Hlis , \Sigma^{-1})$, where
\begin{equation} \label{eq:pencil_lis}
    \Hlis = \int \nabla G^T \, \Sigma_\text{obs}^{-1} \, \nabla G \,\mathrm{d}\nu.
\end{equation}
The matrix $\Hlis$ is the expectation over the posterior $\nu$ of the Gauss--Newton Hessian of the log-likelihood $\nabla G^T \, \Sigma_\text{obs}^{-1} \, \nabla G$. 
The work of \cite{cui2016scalable} generalizes the construction of the matrix $\Hlis$ to reference measures other than the posterior---for example, replacing $\nu$ with the prior measure or the Laplace approximation to the posterior.

The LIS methodology differs from the methodology considered in this paper in the following ways. First, the likelihood function is approximated by $x\mapsto f( P_r x + (I_d-P_r)m )$, rather than by the conditional expectation $\Ex_\mu(f | \sigma(P_r))$. This choice leads to a suboptimal approximation with respect to the Kullback--Leibler divergence; see Section \ref{sec:2.1}. Notice however that this approximation choice is quite similar to the Monte Carlo approximation of the conditional expectation presented in Section \ref{sec:ApproximationConditionalExpectation}. If the sample size satisfies $M=1$, the approximation of the conditional expectation defined by \eqref{eq:MonteCarloCond} is $\widehat F_r : x\mapsto f( P_r x + (I_d-P_r)Y)$, where $Y$ is a sample drawn from the prior distribution $\mu$.

The other important difference is in the definition of the projector. Recall that in Section \ref{sec:ControlledApprox}, $P_r$ is defined as the projector onto the leading eigenspace of the pencil $(H,\Sigma^{-1})$, where $H$ is defined in \eqref{eq:defH}.
The matrices $H$ and $\Hlis$ are in general different, and so are the resulting projectors. The projector $P_r$ in the present paper is defined as the minimizer of an upper bound on the Kullback--Leibler divergence between $\nu$ and its approximation (see Propositon \ref{prop:bestPr} in Section \ref{sec:ControlledApprox}). In contrast, the projector introduced in \cite{cui2014likelihood} is only justified by analogy with optimality results developed in the linear--Gaussian case \cite{spantini2015optimal}.
As a consequence, the LIS projector does not come with an error analysis on the resulting posterior approximation, while  the strategy presented in this paper does. Yet both methodologies can perform remarkably well in applications, and can be even comparable; cf.\ Section \ref{sec:numerics}.

\subsection{Active subspace for Bayesian inverse problems}

The active subspace (AS) method \cite{constantine2014active,russi2010uncertainty} is a dimension reduction technique which addresses the approximation (in the $L^2$ sense) of a real-valued function by means of a ridge function. These directions span the so-called \textit{active subspace} \cite{russi2010uncertainty}. In \cite{constantine2016accelerating}, this methodology is used to approximate the log-likelihood function in a Bayesian inverse problem. Denoting the prior covariance by $\Sigma$, the posterior distribution $\nu$ is approximated by
\begin{equation} \label{eq:appx_as}
    \frac{\mathrm{d} \nuas_r}{ \mathrm{d} \mu} \propto \exp \Ex_{\mu} ( \log f \vert \sigma(P_r) ),
\end{equation}
where $P_r$ is defined as the $\Sigma^{-1}$-orthogonal projector onto the leading generalized eigenspace of the matrix pencil $(\Has , \Sigma^{-1})$, with
\begin{equation} \label{eq:pencil_as}
    \Has = \int (\nabla \log f) (\nabla \log f)^T \,\mathrm{d} \mu.
\end{equation}

Active subspace reduction differs from the methodology introduced in this paper in the following aspects. First, the conditional expectation in \eqref{eq:appx_as} applies to the log-likelihood function rather than to the likelihood function itself. This choice is motivated by the fact that $\Ex_{\mu} ( \log f \vert \sigma(P_r) )$ yields an optimal approximation of  $\log f$ in the set $\{g\circ P_r , \,  g:\mathbb{R}^d \rightarrow \mathbb{R}\}$ with respect to the $L^2_\mu$-norm. However, the function $\exp \Ex_{\mu} ( \log f \vert \sigma(P_r) )$ is \emph{not} optimal with respect to the Kullback-Leibler divergence; see Section \ref{sec:2.1}.
 
Another difference is in the choice of projector. Comparing \eqref{eq:pencil_as} with \eqref{eq:defH}, we see that the integral in \eqref{eq:pencil_as} is taken over the prior $\mu$ rather than the posterior $\nu$, and thus the matrices $\Has$ and $H$ are in general different. Notice that $\Has$ corresponds to the matrix $\widetilde H$ defined in \eqref{eq:BoundHTilde} with the choice $\widetilde\nu=\mu$.

Finally, we mention that the active subspace method comes with an upper bound on the Hellinger distance between $\nu$ and its approximation; see Theorem 3.1 in \cite{constantine2016accelerating}. This analysis relies on a Poincaré inequality rather than on a logarithmic Sobolev inequality. Moreover, the Hellinger bound in \cite{constantine2016accelerating} contains unknown (and uncontrolled) constants. In this paper, we provide sufficient conditions on the prior $\mu$ to control the constants associated with the logarithmic Sobolev inequality and hence with our Kullback--Leibler error bound. From a theoretical point of view, it is challenging to relate the two bounds, and we cannot make a definitive statement about their relative merit: comparing upper bounds of two different metrics is not really informative. Instead we will compare the two methodologies by means of numerical experiments in Section \ref{sec:numerics}.

\section{Numerical illustration} \label{sec:numerics}

\newcommand{\meter}{{\rm m}}
\newcommand{\unittime}{{\rm day}}
\newcommand{\param}{x}
\newcommand{\data}{y}
\newcommand{\prmean}{m}
\newcommand{\prcov}{\Sigma}
\newcommand{\obscov}{\Sigma_{\rm obs}}
\newcommand{\normal}{\mathcal{N}}
\newcommand{\forward}{G}
\newcommand{\error}{\varepsilon_\text{obs}}
\newcommand{\real}{\mathbb{R}}
\renewcommand{\hat}{\widehat}

We use two Bayesian inverse problems to numerically demonstrate various theoretical aspects of the proposed dimension reduction method. 
In both examples we assume that the prior distribution is Gaussian, $\mu = \normal (\prmean, \prcov)$. 
This choice of prior distribution satisfies Assumption \ref{assu:convexityOfV} with $\Gamma=\Sigma^{-1}$ and $\kappa=1$. 
In both examples, we suppose that the data $\data$ correspond to predictions of a nonlinear forward model $x\mapsto \forward(\param)$ that are corrupted by measurement noise, where the latter is additive and normally distributed with zero mean and covariance $\obscov$. Thus we have a likelihood function, up to a multiplicative constant, of the form
\begin{equation}\label{eq:formOfLikelihood}
 f:x\mapsto  \exp\left( - \frac12 \|\forward(\param) - \data \|^2_{\obscov^{-1}}\right).
\end{equation}
Computer codes used to produce these numerical results, along with instructions, are available at \url{https://github.com/tcui001/certified}.

\subsection{Example 1: Atmospheric remote sensing}

Our first example is a realistic atmospheric remote sensing problem, where satellite observations from the {\it Global Ozone MOnitoring System} (GOMOS) are used to estimate the concentration profiles of various gases in the atmosphere.

\subsubsection{Problem setup}

The GOMOS instrument repeatedly measures light intensities at different wavelengths $\lambda$ and different altitudes ``$\mathrm{alt}$.'' 
The light transmissions $T_{\lambda,\mathrm{alt}}$ are modeled using Beer's law:
\begin{equation}
  T_{\lambda,\mathrm{alt}}
  = T_{\lambda,\mathrm{alt}} \Big( \kappa^{1},\hdots,\kappa^{N_{\mathrm{gas}}} \Big)
  = \exp \left( - \int_{\text{path}(\mathrm{alt})} \sum_{\mathrm{gas}=1}^{N_{\mathrm{gas}}} a_\lambda^{\mathrm{gas}}(z(\zeta)) \kappa^{\mathrm{gas}}(z(\zeta)) \, \mathrm{d} \zeta \right) ,
  \label{mod}
\end{equation}
where the integral is taken along the ray path, and $z(\zeta)$ is the height in the atmosphere of a point $\zeta$ on the path. The curvature of the earth is taken into account; see the illustration in Figure \ref{fig:gomos_setting}. 
The quantity $a_\lambda^{\mathrm{gas}}(z)$, known from laboratory measurements, is called the cross-section. It is a measure of how much a gas absorbs light of a given wavelength $\lambda$ at a given height $z$.
In this model there are $N_{\rm gas}$ gases, and for each we would like to infer the density profile $\kappa^{\mathrm{gas}}:z\mapsto\kappa^{\mathrm{gas}}(z)$.
Each of the $N_{\rm gas}$ profiles are modeled as independent random processes with log-normal prior distributions.
That is, $\log \kappa^{\mathrm{gas}}(\cdot)$ follows a Gaussian distribution $ \mathcal{N}(\prmean^{\mathrm{gas}},\prcov^{\mathrm{gas}})$ where $\prcov^{\mathrm{gas}}$ is the covariance operator associated with the squared exponential kernel
\begin{equation}
 (z, z^\prime) \mapsto \sigma^2_{\mathrm{gas}}\exp \left(-\frac{\|z - z^\prime\|^2}{2z_0^2} \right).
\end{equation}
These priors are chosen to promote smooth gas density profiles with large variations.

We consider a discretization of the vertical axis $z$ into $N_{\rm alt}$ layers with piecewise constant densities. We denote by $x\in\mathbb{R}^{N_{\rm alt} N_{\rm gas}}$ the vector containing the logarithms of the (unknown) $N_{\rm gas}$ gas densities in the $N_{\rm alt}$ layers, i.e., 
$$
 x = \text{vec}  \begin{pmatrix}
      \log( \kappa^{1}(z_1) ) &\hdots& \log( \kappa^{N_{\rm gas}}(z_1) )\\
      \vdots & \ddots & \vdots \\
      \log(\kappa^{1}(z_{N_{\rm alt}})) &\hdots& \log( \kappa^{N_{\rm gas}}(z_{N_{\rm alt}})  )
     \end{pmatrix} ,
$$
where $z_1,\hdots,z_{N_{\rm alt}}$ denote the $N_{\rm alt}$ altitudes and $\text{vec}(\cdot)$ denotes the vectorization operator.
Because $\kappa^{\rm gas}$ is log-normally distributed, the prior distribution $\mu=\mathcal{N}(m,\Sigma)$ of the parameter $x$ is Gaussian, and its prior mean $m$ and its prior covariance $\Sigma$ are derived from the discretization of $m^{\rm gas}$ and $\Sigma^{\rm gas}$.

The data $y$ is a noisy measurement of the light transmissions $T_{\lambda,\mathrm{alt}}$ at $N_{\lambda}$ different wavelengths and at $N_{\rm alt}$ different altitudes. It is a vector $y\in\mathbb{R}^{N_{\lambda}N_{\rm alt}}$ given by
$$
 y = G(x) + \varepsilon_{\rm obs}
$$
where the forward model $G(x)$ results from the discretization of Beer's law \eqref{mod}, and where $\varepsilon_{\rm obs} \sim \mathcal{N}(0,\Sigma_{\rm obs})$ is centered Gaussian noise with known observation covariance $\Sigma_{\rm obs}$.
With this model, the likelihood function $f$ takes the form of \eqref{eq:formOfLikelihood}.

Here we adopt the same model setup and synthetic data set used in \cite{cui2014likelihood,cui2016scalable}. The atmosphere is discretized into $N_{\rm alt} = 50$ layers and with $N_{\rm gas}=4$ profiles to infer so the total parameter dimension is $d = N_{\rm alt}N_{\rm gas}= 200$. 
We have observations at $N_\lambda = 1416$ wavelengths, and thus the dimension of the data is $N_{\rm alt}N_{\lambda} = 70800$.
We refer the readers to \cite{tamminen2004adaptive,haario2004Markov} for a further description of the model setup, the data set, and the Bayesian treatment of this inverse problem.

\begin{figure}[h]
  \includegraphics[width=0.8\textwidth]{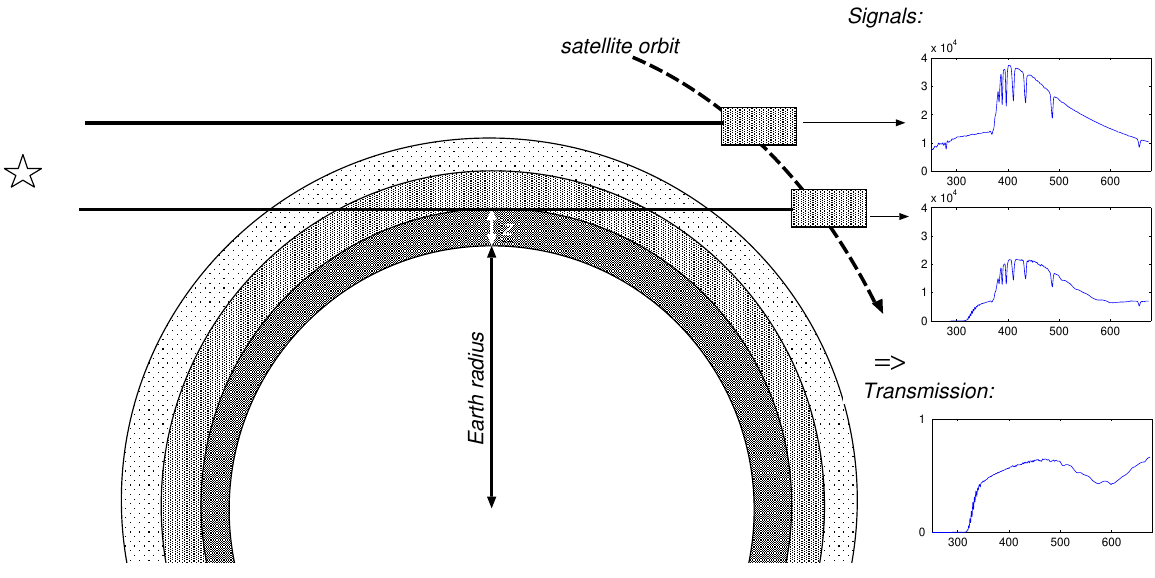}
  \caption{The principle of the GOMOS measurement. The atmosphere is represented as spherical layers around the Earth. Note that the thickness of the layers is much larger relative to the Earth in this figure than in reality. The figure is adopted from \cite{haario2004Markov}, with the permission of the authors.}
  \label{fig:gomos_setting}
\end{figure}

\subsubsection{Comparison of approximations}
\label{sec:gomoscompare}

\begin{figure}[h!]
  \centerline{\includegraphics[width=\textwidth]{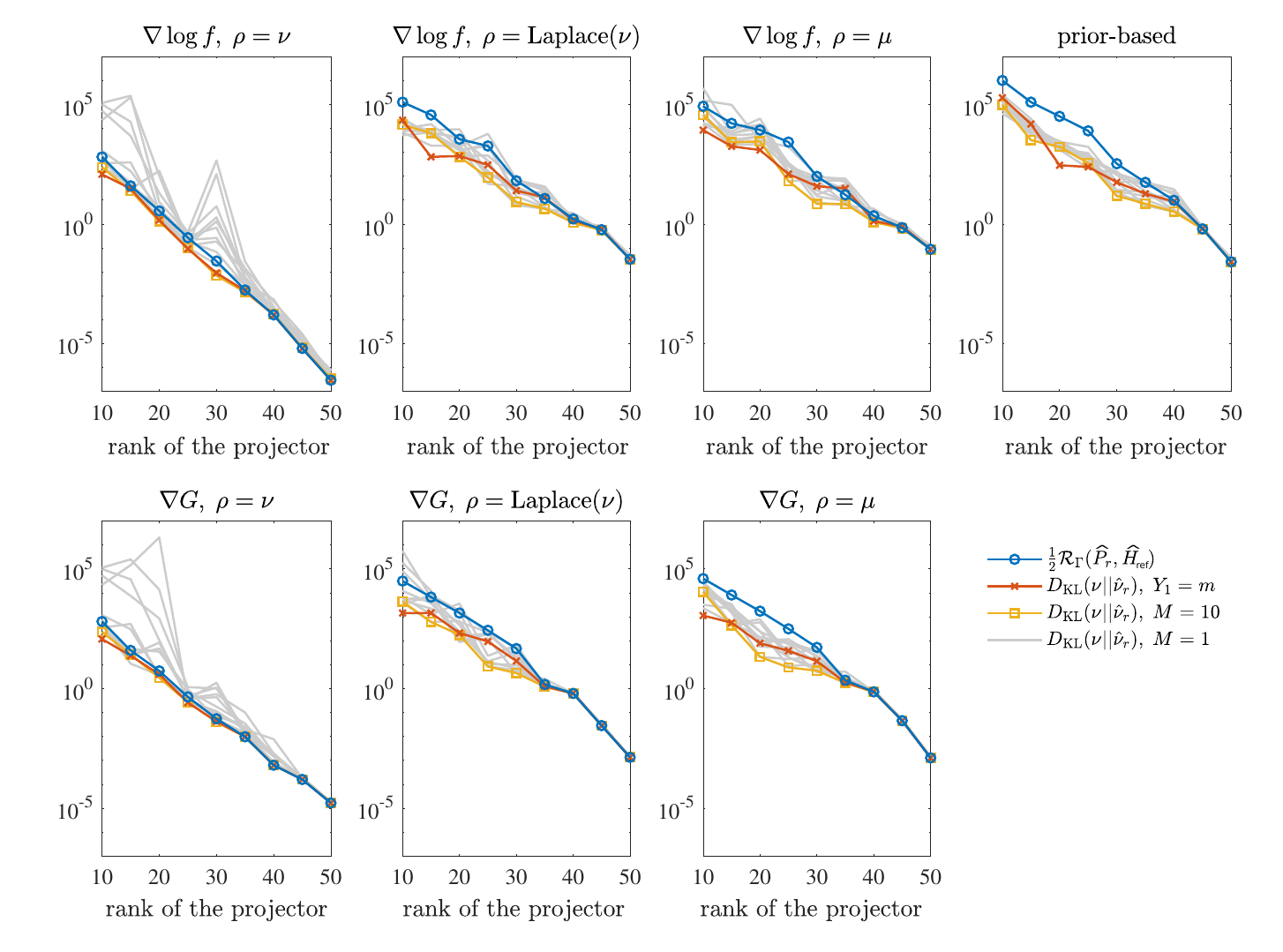}}
  \caption{GOMOS example: posterior approximation errors
   versus the rank of the projector $P_r$. The title of each plot (except the top right one) summarizes the combination of integrand and reference measure $\rho$ used to build the projector; see \eqref{eq:the differents Hs}. For each projector, we consider three different approximations of the conditional expectation and plot the resulting KL divergences $\Dkl ( \nu ||  \hat\nu_r)$: the prior mean approximation (crosses, $Y_1 = m$), the sample average approximation (squares, $M=10$), and the one-sample approximation (grey lines, $M=1$). For each choice of projector, the reconstruction error $\frac12 \mathcal{R}_{\Gamma}(\widehat{P}_r, \widehat{H}_\text{ref})$ is also illustrated by the blue circles, where the reference matrix $\widehat{H}_\text{ref}$ is a Monte Carlo estimate of $H=\int \nabla (\log f) (\nabla\log f)^T \text{d} \nu $ computed using $10^6$ posterior samples generated by the DILI algorithm.
  }
  \label{fig:gomos_bounds}
\end{figure}

We compare the posterior approximations obtained by the proposed method (see Sections \ref{sec:2} and \ref{sec:3}) to those obtained by the existing methods described in Section \ref{sec:4}. The comparison is summarized in Figures \ref{fig:gomos_bounds} and \ref{fig:gomos_dkl}.
\smallskip

Except for the prior-based method described in Remark \ref{sec:PriorBasedDimRed}, which builds a projector only from the prior covariance matrix $\Sigma$, the projector $P_r$ is built with a generalized eigendecomposition of 
\begin{equation}\label{eq:the differents Hs}
 H^{(\nabla\log f)}_{\rho} = \int (\nabla\log f)(\nabla\log f)^T \,\mathrm{d}\rho
\quad\text{ or }\quad
 H^{(\nabla G)}_{\rho} = \int (\nabla G)^T\obscov^{-1}(\nabla G) \,\mathrm{d}\rho,
\end{equation}
where $\rho$ is either the prior $\mu$, the posterior $\nu $, or the Laplace approximation of the posterior $\nu$. 
Using this notation, we have $H = H^{(\nabla\log f)}_{\nu}$ with $\rho=\nu$.
Each of the six possible combinations ($\{\nabla \log f , \nabla G\} \times \{\mu, \nu, \text{Laplace}(\nu)\}$) corresponds to a subplot in Figure \ref{fig:gomos_bounds}. 
In all cases, we compute Monte Carlo estimates of the matrices $H^{(\nabla\log f)}_{\rho}$ or $H^{(\nabla G)}_{\rho}$ above using $K=10^6$ samples, to minimize the impact of Monte Carlo error in this first comparison.
For $\rho = \mu$ or $\rho = \text{Laplace}(\nu)$, these samples are readily obtained. For $\rho=\nu$, the DILI algorithm with the `MGLI-Langevin' proposal \cite[Proposal  3.4]{cui2016dimension} is used as a state-of-the-art method for obtaining posterior samples. 
We first run the DILI algorithm for $10^5$ iterations to adaptively estimate the matrix $H^{(\nabla G)}_{\nu}$ and the associated reduced dimensional subspace required internally within the algorithm. Then, we continue to run the DILI algorithm with the previously computed subspace for another $10^6$ iterations to generate posterior samples.
This choice is not essential; we could equivalently have used many other algorithms to generate a reference posterior sample set---for instance, an adaptive Metropolis method \cite{chen2016accelerated,haario2001adaptive},
a delayed-acceptance method \cite{christen2005Markov,cui2011Bayesian,cui2019Aposteriori}, Hamiltonian Monte Carlo \cite{duane1987hybrid,neal2011MCMC} and improved versions thereof \cite{girolami2011riemann,hoffman2014no}, or sequential Monte Carlo samplers \cite{del2006sequential}.

Once we have estimated the matrix $H^{(\nabla\log f)}_{\rho}$ or $H^{(\nabla G)}_{\rho}$ for $\rho\in\{\mu, \nu, \text{Laplace}(\nu)\}$, we compute the generalized eigendecomposition of the estimate (i.e., $(\widehat{H}^{(\nabla\log f)}_{\rho}, \Sigma^{-1})$ or $(\widehat{H}^{(\nabla G)}_{\rho}, \Sigma^{-1})$) and assemble the rank-$r$ projector $\widehat{P}_r$ onto the corresponding leading eigenspace for $r\in\{10, 15, \hdots, 50\}$. 
Then we approximate the conditional expectation $\mathbb{E}_\mu(f|\sigma(P_r))$ by
$$
 x \mapsto \frac{1}{M}\sum_{i=1}^{M} f(\widehat{P}_r x + (I_d-\widehat{P}_r) Y_i ), 
 \quad\text{ where } Y_1,\hdots,Y_M \overset{\text{iid}}{\sim} \mu ,
$$
with either $M=10$ prior samples (yellow line with squares in Figure~\ref{fig:gomos_bounds}) or with $M=1$ prior sample (solid grey lines).
We also consider the deterministic approximation $x \mapsto f(\widehat{P}_r x + (I_d-\widehat{P}_r) m )$, which corresponds to using $M=1$ sample that is fixed to the prior mean, $Y_1=m$ (red line with crosses).
Finally, for each of the projectors $\widehat{P}_r$ obtained above, we also calculate and plot (blue line with circles) the reconstruction error $\frac12 \mathcal{R}_{\Gamma} (\widehat{P}_r,\widehat{H}_\text{ref})$ \eqref{eq:reconstructionError}, where $\widehat{H}_\text{ref}$ is an independent Monte Carlo estimate of $H = \int (\nabla \log f) (\nabla \log f)^T \text{d} \nu $, again computed using $10^6$ posterior samples generated by the DILI algorithm. As described in Corollary~\ref{cor:KL_bound}, the reconstruction error is an upper bound for the KL divergence $\Dkl( \nu || \nu_r^* )$ achieved by any given projector.

Once both the projector and the approximation of the conditional expectation are defined, the KL divergence $\Dkl (\nu || \hat\nu_r)$ is approximated with an independent set of $10^6$ samples drawn from the posterior using the DILI algorithm \cite{cui2016dimension}. This approximation is assumed to be sufficiently accurate for the purpose of our experiments.

Among the different ways of approximating the conditional expectation, the sample-based approach with $M = 10$ outperforms both the one-sample approach ($M=1$) and the deterministic approach ($Y_1 = m$). 
As shown in the theoretical analysis of Section \ref{sec:ApproximationConditionalExpectation}, the error due to approximating the conditional expectation is, in expectation, proportional to the reconstruction error; this is why we observe that the error decays quickly with the rank of the projector even when $M$ is small.
Note that the cost for evaluating the approximation of the conditional expectation is proportional to $M$: it is exactly $M$ times the cost of evaluating the exact likelihood function $f$, which might be expensive when $M$ is large.
Therefore we would prefer using the deterministic prior mean approach ($Y_1 = m$) which, in addition to having a low evaluation cost, has an accuracy comparable to that of the sample-based approach with $M = 10$ and seems better than the one-sample approach ($M=1$).

Figure \ref{fig:gomos_bounds} also shows that the reconstruction error provides a good estimate of the KL divergence, a quantity which is rarely available in practice.  In theory, the reconstruction error gives a bound on the KL divergence for any projector, provided that the conditional expectation is computed exactly---which is not what we do here.  In practice, however, we observe that the reconstruction error is in general a fairly good error indicator even when using a sample approximation of the conditional expectation. This might not be true when sample sizes are very small; see, for instance, the cases $\rho=\nu$ and $M=1$ in Figure \ref{fig:gomos_bounds}.

\begin{figure}[h]
  \centerline{\includegraphics[width=\textwidth]{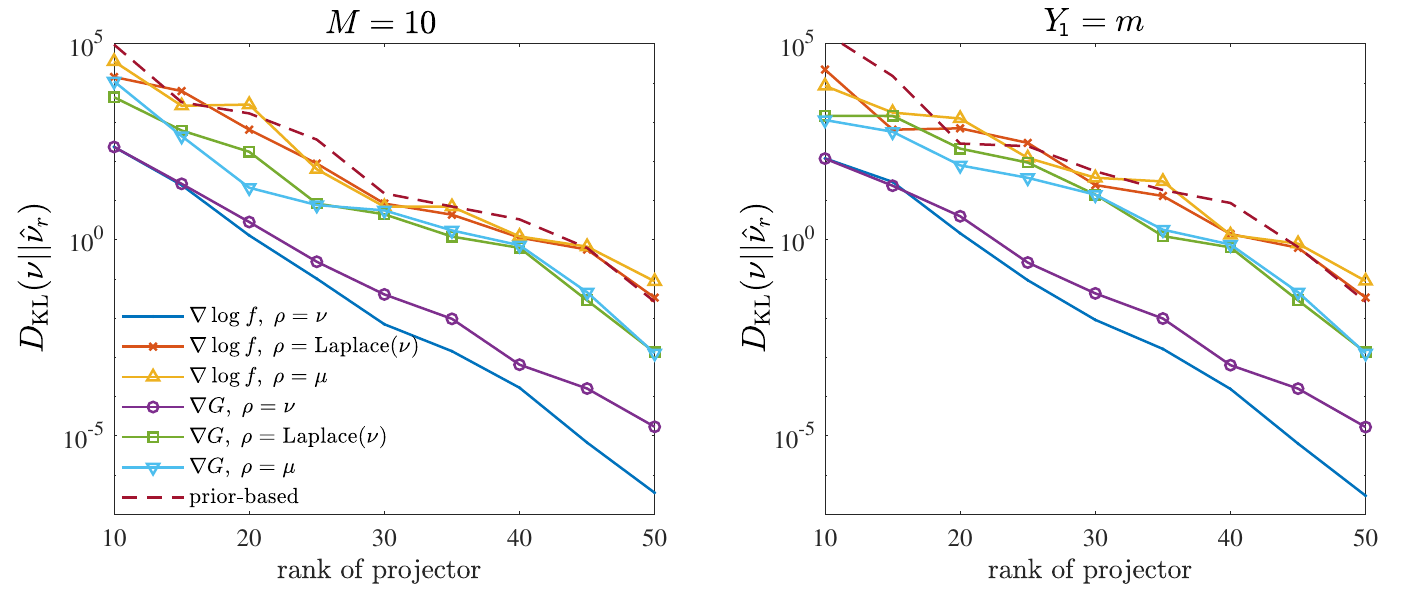}}
  \caption{GOMOS example. KL divergence $\Dkl ( \nu ||  \hat\nu_r)$ for posterior approximations $\hat\nu_r$ obtained using various projectors $P_r$. The left plot shows results with a sample average ($M = 10$) used to approximate the conditional expectation, while the right plot shows results using the prior mean approximation ($Y_1 = m$).}
  \label{fig:gomos_dkl}
\end{figure}

In Figure \ref{fig:gomos_dkl} we compare the performance of the different definitions of the projector. Here we only consider the KL divergence of posterior approximations obtained using the sample-based approximation of the conditional expectation with $M = 10$ (left) and the prior mean approximation $Y_1 = m$ (right).  
The projector obtained from $H_\nu^{(\nabla\log f)}$ achieves the best accuracy overall. The LIS method, obtained from $H_\nu^{(\nabla\log G)}$ also performs reasonably well. 
We observe a performance gap between these two projectors and the other five: the latter lead to significantly less accurate posterior approximations, at any given rank.

\subsubsection{Demonstration of Algorithm \ref{algo:iter_algo}}

Now we demonstrate the iterative procedure given in Algorithm \ref{algo:iter_algo} for constructing projectors and posterior approximations.
In this exercise, we approximate the conditional expectation using the prior mean option ($Y_1 = m$).
For each iteration $l$, the rank of the projector, the reconstruction error $\mathcal{R}_{\Gamma}(P_r^{(l)},\widehat{H}_\text{ref})$, and the KL divergence from the posterior measure $\nu$ to the resulting approximation $\hat\nu_r^{(l)}$, are shown in Figure \ref{fig:gomos_iter}.
The left column of Figure \ref{fig:gomos_iter} illustrates the iterative procedure where the error threshold $\varepsilon = 10^{-2}$ is used to determine the rank of the projector in each iteration.
Here we also set the maximum rank of the projector to be $r_{\rm max} = 40$. 
The right column of Figure \ref{fig:gomos_iter} shows a fixed-rank variant of the iterative procedure.
In this variant, the rank of the projector is held constant at $r=30$. In both cases, we use the adaptive MALA algorithm \cite{atchade2006adaptive} to draw samples from the $r$-dimensional distribution $\widehat{\nu}_r^{(l)}$ at each iteration. This choice is incidental; any other standard MCMC algorithm could have been used instead.

The iterative procedure appears to be effective in approximating the posterior.
When the rank of the projector is dynamically adjusted (left column of Figure \ref{fig:gomos_iter}), the iterative procedure achieves the desired level of accuracy within the first two iterations; then the rank of the projector, the reconstruction error, and the KL divergence  are stabilized in later iterations. 
When the rank of the projector is fixed (right column of Figure \ref{fig:gomos_iter}), the KL divergence decays significantly in the first two iterations, and then is stabilized in later iterations.

\begin{figure}[h]
  \centerline{\includegraphics[width=0.8\textwidth]{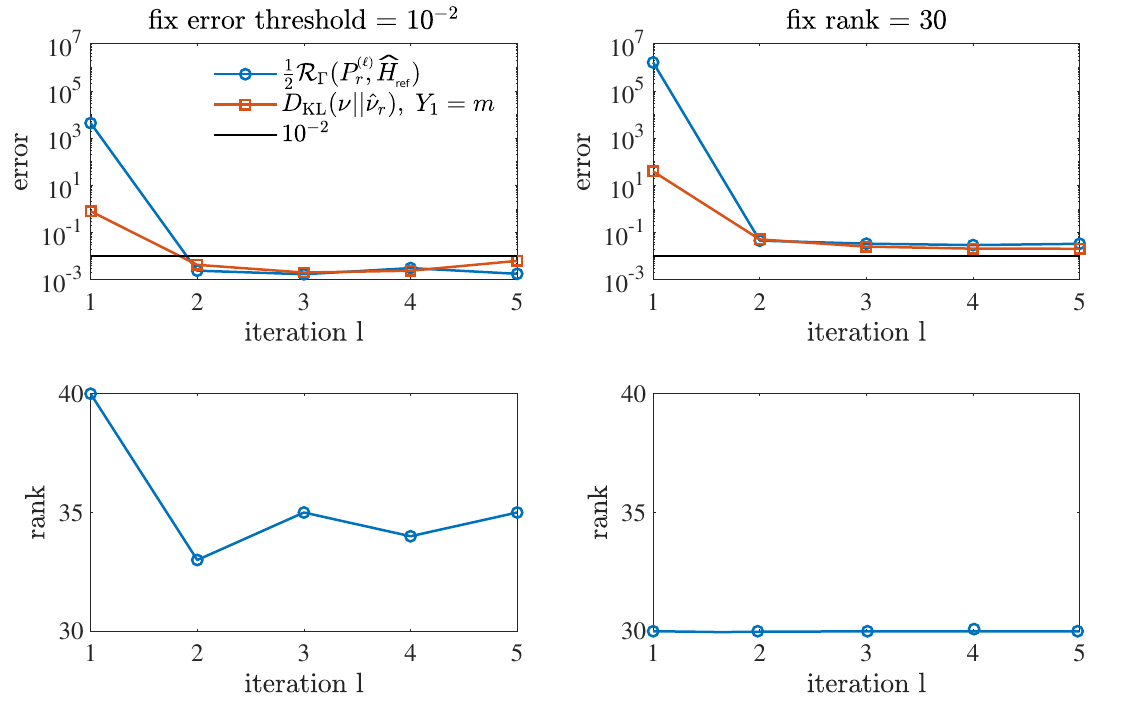}}
  \caption{GOMOS example. Rank of the projector, error bound  $\frac12\mathcal{R}_{\Gamma}(P_r^{(l)},\widehat{H}_\text{ref})$, and KL divergence $\Dkl(\nu|| \hat\nu_r^{(l)})$, where $\hat\nu_r^{(l)}$ is produced by Algorithm \ref{algo:iter_algo} and where $\widehat{H}_\text{ref}$ is a Monte Carlo estimate of $H=\int(\nabla\log f)(\nabla\log f)^T \mathrm{d}\nu$ computed using $10^6$ posterior samples generated by the DILI algorithm.
  The left column shows results where a target error tolerance $\varepsilon =  10^{-2}$ and $r_{\max}=40$ are used to determine the rank of the projector. The right column shows a fixed-rank variant of the iterative procedure, where the rank is held at $r=30$.
  }
  \label{fig:gomos_iter}
\end{figure}

\subsubsection{Impact of sample size}

We now analyze the impact of the sample size $K$ in the Monte Carlo approximation $\widehat H$ of $H$; see \eqref{eq:defHK}. As discussed in Section \ref{sec:ConstructionOfTheProjector}, Proposition \ref{prop:ControlReconstructionError} ensures that for sufficiently large $K$, the reconstruction error is sufficiently well approximated so that the resulting $\widehat P_r$ in \eqref{eq:ApproxPr} is quasi-optimal with high probability.
We now illustrate numerically the impact of the sample size $K$ via the following criteria:
\begin{enumerate}
 \item Is $\mathcal{R}_{\Gamma}(\widehat P_r, H)$ close to the minimum of the true reconstruction error $\mathcal{R}_{\Gamma}(P_r^*, H)$? Since our primary goal is to minimize a bound on the KL divergence, it makes sense to measure the quality of $\widehat P_r$ in terms of its ability to minimize the reconstruction error.  Thus we do not look at distances between $\widehat P_r$ and the minimizer $P_r^*$ (such as the operator norm $\|\widehat P_r - P_r^*\|$ or the Frobenius norm $\|\widehat P_r - P_r^*\|_F$) because these  distances are not directly related to the problem of posterior approximation.
 
 \item Can we use $\mathcal{R}_{\Gamma}(\widehat P_r, \widehat H)$ as an error indicator for the KL divergence? Since the quantity $\mathcal{R}_{\Gamma}(\widehat P_r, \widehat H)$ is the only one that is accessible in practice, we would like to know whether it is safe to use as an error estimator.
\end{enumerate}
Numerical results are summarized in the left plot of Figure \ref{fig:gomos_ss} for different sample sizes $K \in \{50 , 100 , 200 , 500 , 1000 , 2000, 10^6\}$ generated by the DILI algorithm. To assess the variability of these results due to the randomness in  $\widehat H$, we also repeat the previous experiment $10$ times for $K \in \{50 , 200 , 1000, 10^6\}$ (right plot of Figure \ref{fig:gomos_ss}).

\begin{figure}[h!]
  \centerline{\includegraphics[width=\textwidth]{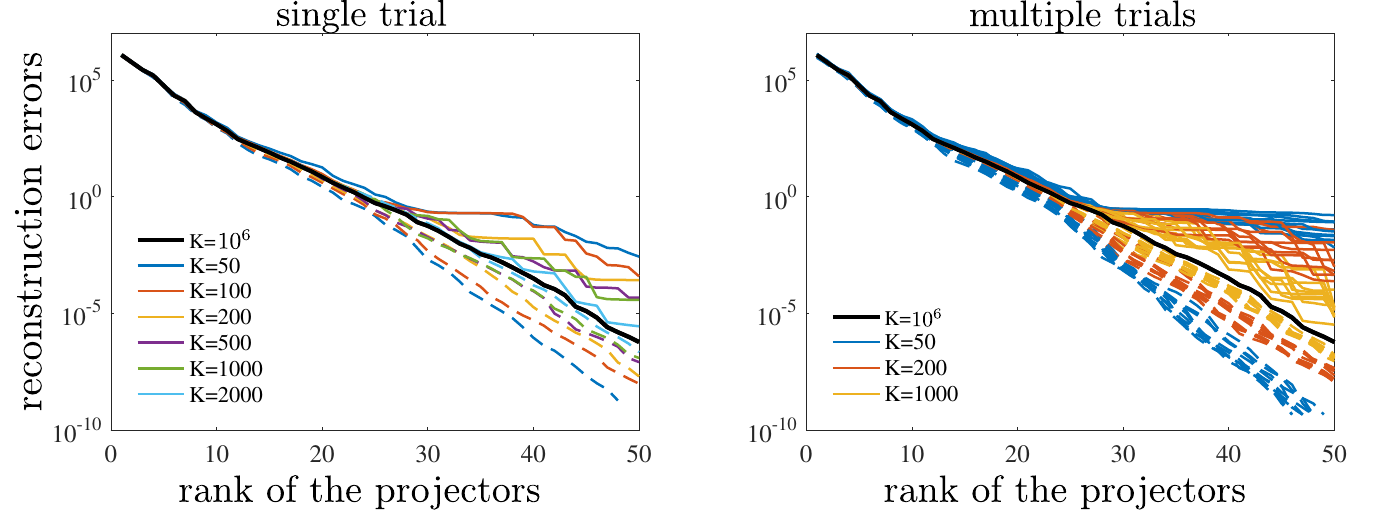}}
  \caption{GOMOS example. The approximate reconstruction error $\mathcal{R}_{\Gamma}(\widehat P_r, \hat H)$ (dashed lines) and reference reconstruction error $\mathcal{R}_{\Gamma}(\widehat P_r, \widehat{H}_\text{ref})$ (solid lines) for projectors $\widehat P_r$ obtained using different sample sizes $K$. The reference matrix $\widehat{H}_\text{ref}$ is a Monte Carlo estimate of $H=\int (\nabla \log f) (\nabla\log f)^T \text{d} \nu $ computed using $10^6$ posterior samples generated by the DILI algorithm.
  Left: errors obtained in single-trial experiments. Right: errors obtained in 10 repeated experiments. }
  \label{fig:gomos_ss}
\end{figure}

Concerning the first criterion, we observe that the quality of $\widehat P_r$ depends not only on the sample size $K$, but also on the rank $r$ of the projector. Indeed, $\mathcal{R}_{\Gamma}(\widehat P_r, \widehat{H}_\text{ref})$ (solid colored lines in Figure \ref{fig:gomos_ss}) is closest to the minimum of the reconstruction error (black line) for small $r$ and large $K$. This suggests that the sample size should be chosen larger when $r$ is chosen to be large. Note that this fact is not revealed by the theoretical analysis of Section \ref{sec:ConstructionOfTheProjector}. 

Concerning the second criterion, we observe that the approximate reconstruction error $\mathcal{R}_{\Gamma}(\widehat P_r, \hat H)$ (dashed colored lines) is always smaller than the reference reconstruction error $\mathcal{R}_{\Gamma}(\widehat P_r, \widehat{H}_\text{ref})$ (solid colored lines). This relationship is particularly apparent when the sample size $K$ is small and the rank $r$ is large.
This means that one should be careful when using $\mathcal{R}_{\Gamma}(\widehat P_r, \hat H)$ in place of the exact reconstruction error, as it tends to underestimate the error.

Overall, in this example, both criteria are validated for sufficiently large sample sizes. With $K \geq 1000$, the resulting projector $\widehat P_r$ has a reconstruction error comparable to that of the projector $P_r^*$, and the approximate reconstruction error $\mathcal{R}_{\Gamma}(\widehat P_r, \widehat H)$ provides a fairly good estimate for  $\mathcal{R}_{\Gamma}(\widehat P_r, H)$.

\subsection{Example 2: Elliptic PDE}
\label{sec:ellpitic}

Our second example is an inverse problem aiming at estimating the spatially inhomogeneous coefficient of an elliptic PDE, adopted from \cite{cui2016scalable}. In physical terms, our problem setup corresponds to inferring the
transmissivity field of a two-dimensional groundwater aquifer from
partial observations of the stationary drawdown field of the
water table, measured from well bores.

\subsubsection{Problem setup}
Consider a three kilometer by one kilometer problem domain
$\Omega = [0\,\meter, 3000\,\meter]\times [0\,\meter, 1000\,\meter]$,
with boundary $\partial \Omega$. We denote the spatial coordinate by
$\zeta \in \Omega$.
Consider the transmissivity field $T(\zeta)$ (units
[$\meter^2/\unittime$]), the drawdown field $p(\zeta)$ (units
[$\meter$]), and sink/source terms $q(\zeta)$ (units
[$\meter/\unittime$]).
The drawdown field for a given transitivity and source/sink
configuration is governed by the elliptic equation:
\begin{equation}
  -\nabla \cdot \left( T(\zeta) \nabla p(\zeta) \right) = q(\zeta), \quad \zeta \in \Omega .
  \label{eq:forward_e}
\end{equation}
We prescribe the drawdown value to be zero on the boundary (i.e., a
zero Dirichlet boundary condition), and define the source/sink term
$q(\zeta)$ as the superposition of four weighted Gaussian plumes with
standard width $50$ meters. The plumes are centered near the four
corners of the domain (at $[20\,\meter, 20\,\meter]$,
$[2980\,\meter, 20\,\meter]$, $[2980\,\meter, 980\,\meter]$ and
$[20\,\meter, 980\,\meter]$) with magnitudes of $-3000$, 2000, 4000,
and $-3000$ [$\meter/\unittime$], respectively.
We solve \eqref{eq:forward_e} by the finite element method.

\begin{figure}[h!]
  \centerline{\includegraphics[width=0.8\textwidth]{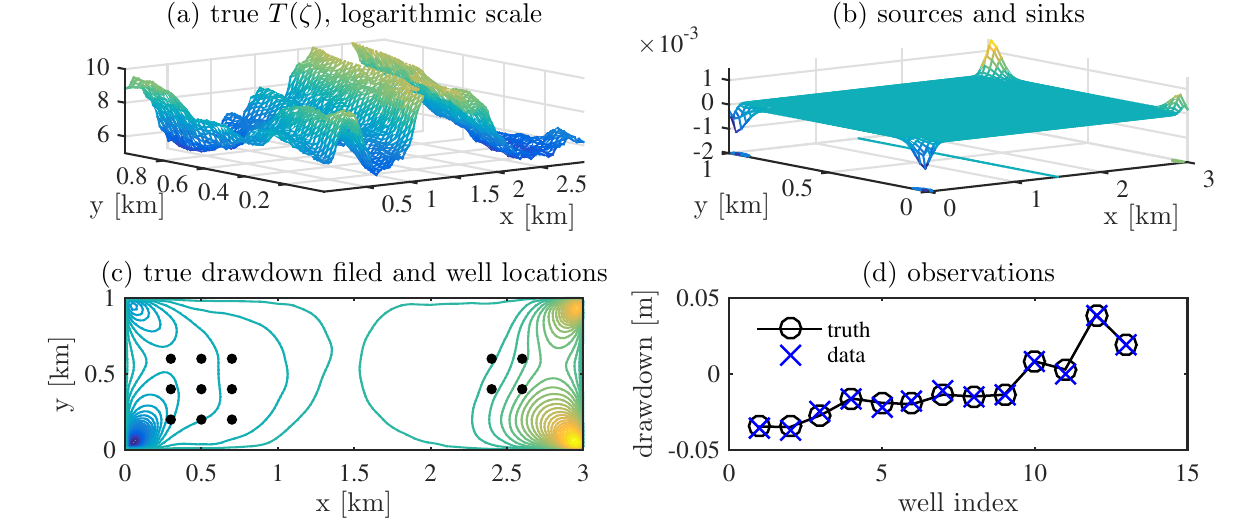}}
  \caption{Setup of the groundwater inversion example. (a) ``True''
    transmissivity field. (b) Sources and sinks. (c) Drawdown field
    resulting from the true transmissivity field, with observation
    wells indicated by black dots. (d) Data $\data$; circles represent
    the noise-free drawdowns at each well, while crosses represent the
    observed drawdowns with measurement noise.}
  \label{fig:setup_e}
\end{figure}

The discretized transitivity field $T(\zeta)$ is endowed with a log-normal prior distribution, i.e.,
$$
  T(\zeta) = \exp(\param(\zeta)), \; \text{ where } \; \param \sim \normal\left(\prmean, \prcov \right),
$$
where the prior mean is set to $\log( 1000\,[\meter/\unittime] )$ and
the inverse of the covariance matrix $\prcov^{-1}$ is defined through
the discretization of an Laplace-like stochastic partial differential
equation \cite{lindgren2011explicit},
$$
  ( -\nabla \cdot K \nabla + \kappa^2 ) \param(\zeta) = \mathcal{W}(\zeta),
$$
where $\mathcal{W}(\zeta)$ is white noise.
In this example, we set the stationary, anisotropic correlation tensor
$K$ to
\[
K = \left[\begin{array}{rr} 0.55 & -0.45\\ -0.45 &
    0.55 \end{array}\right],
\]
and put $\kappa = 50$.
The ``true'' transmissivity field is a realization from the prior
distribution. The true transmissivity field, sources/sinks, 
simulated drawdown field, and synthetic data are shown in Figure
\ref{fig:setup_e}.
Partial observations of the pressure field are collected at 13
sensors whose locations are depicted by black dots in Figure
\ref{fig:setup_e}(c), with additive error $e \sim \normal(0, \sigma^2 I_{13})$.
The standard deviation $\sigma$ of the measurement noise is prescribed
so that the observations have signal-to-noise ratio 20.
The noisy data are shown in Figure \ref{fig:setup_e}(d).

In this example, the finite element discretization of
\eqref{eq:forward_e} uses $120\times 40$ bilinear elements to
represent the drawndown field $p(\zeta)$, while the
transmissivity field $T(\zeta)$ is modeled as piecewise constant for
each element. This yields the discretized forward model 
$\forward : \real^{4800} \rightarrow \real^{13}$, where the parameter $\param$ is of dimension
$d = 4800$.

\subsubsection{Numerical results}

We proceed with the same comparison as in the GOMOS example. We compare the KL divergence $\Dkl ( \nu ||  \hat\nu_r)$ for different posterior approximations $\hat\nu_r$ obtained using the various projectors given in Sections \ref{sec:3} and \ref{sec:4}. 
The projectors used in this comparison are built in the same way as in the GOMOS example.
Also we consider the same three options ($M=1$, $M=10$, and $Y_1=m$) for approximating the conditional expectation within $\hat\nu_r$.
Again, to compute the KL divergence $\Dkl ( \nu ||  \hat\nu_r)$, we draw $10^6$ samples from the posterior using the DILI algorithm, which follows the same setup as that of the previous example. 
The comparison is summarized in Figures \ref{fig:ellptic_bounds} and \ref{fig:ellptic_dkl}.

\begin{figure}[h!]
  \centerline{\includegraphics[width=\textwidth]{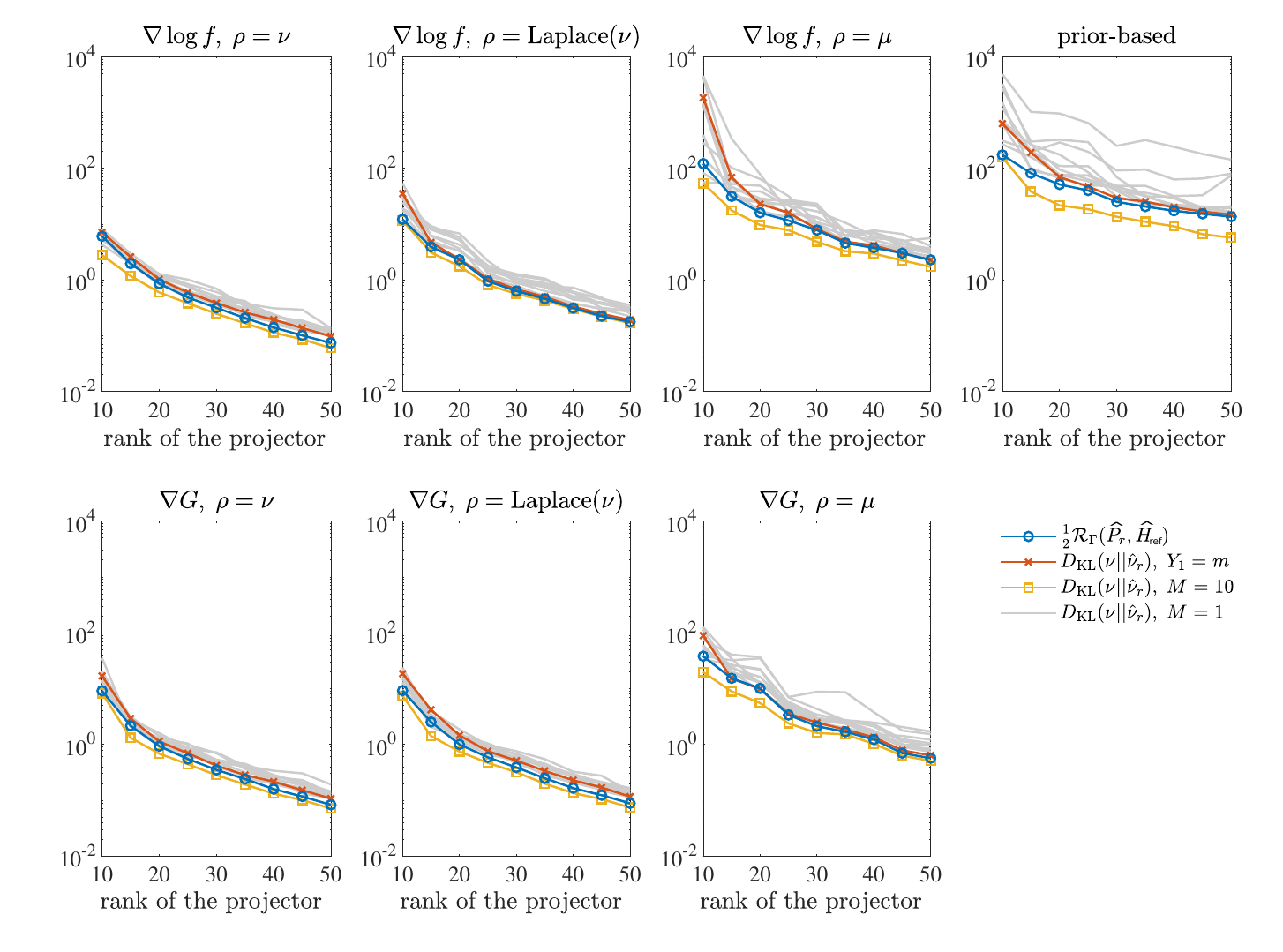}}
  \caption{Same as Figure \ref{fig:gomos_bounds}, but for the elliptic inverse problem: posterior approximation errors versus the rank of the projector $P_r$. The title of each plot (except the top right one) summarizes the combination of integrand and reference measure $\rho$ used to build the projector; see \eqref{eq:the differents Hs}. For each projector, we consider three different approximations of the conditional expectation and plot the resulting KL divergences $\Dkl ( \nu ||  \hat\nu_r)$: the prior mean approximation (crosses, $Y_1 = m$), the sample average approximation (squares, $M=10$), and the one-sample approximation (grey lines, $M=1$). For each choice of projector, the reconstruction error $\frac12 \mathcal{R}_{\Gamma}(\widehat{P}_r, \widehat{H}_\text{ref})$ is also illustrated by the blue circles, where the reference matrix $\widehat{H}_\text{ref}$ is a Monte Carlo estimate of $H=\int (\nabla\log f)(\nabla\log f)^T \text{d} \nu $ computed using $10^6$ posterior samples generated by the DILI algorithm.}
  \label{fig:ellptic_bounds}
\end{figure}

In Figure \ref{fig:ellptic_bounds} we observe that the approximation of the conditional expectation with $M = 10$ samples outperforms the two other options, $M=1$ and $Y_1=m$. This holds true regardless of how the projector is constructed.
We also notice that the prior-mean approximation $Y_1 = m$ in general performs better than the one-sample approximation $M=1$.
Similar to the GOMOS example, the reconstruction error $\mathcal{R}_{\Gamma}(P_r,\widehat{H}_\text{ref})$ provides an effective error indicator for posterior approximations 
that employ the estimate of the conditional expectation with $M=10$ for any choice of projector $P_r$.
Also, the KL divergence decays with $r$ at the same rate as the reconstruction error, independently of the choice of approximation scheme for the conditional expectation.

\begin{figure}[h!]
  \centerline{\includegraphics[width=\textwidth]{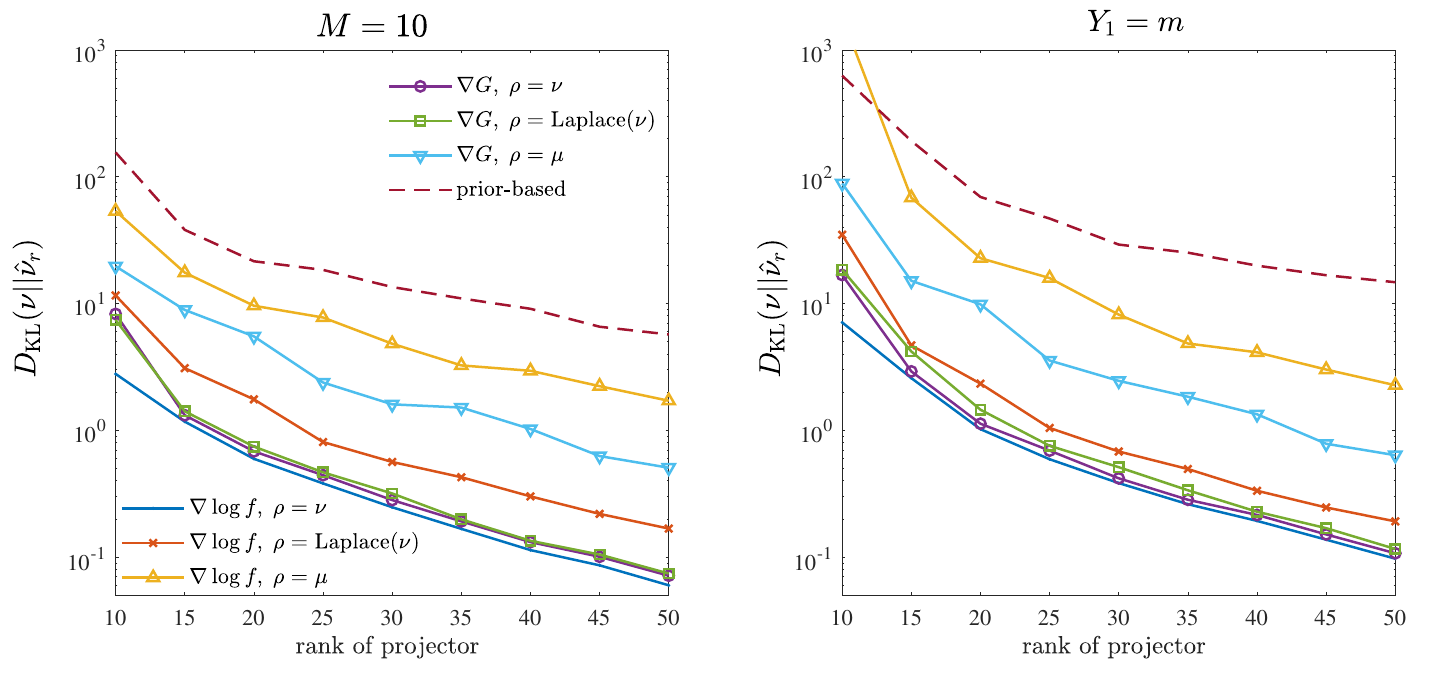}}
  \caption{Same as Figure \ref{fig:gomos_dkl}, but for the elliptic inverse problem: KL divergence $\Dkl ( \nu ||  \hat\nu_r)$ for posterior approximations $\hat\nu_r$ obtained using various projectors $P_r$. The left plot shows results with a sample average ($M = 10$) used to approximate the conditional expectation, while the right plot shows results using the prior mean approximation ($Y_1 = m$).}
  \label{fig:ellptic_dkl}
\end{figure}

A direct comparison of the posterior approximations defined by different projectors is shown in Figure \ref{fig:ellptic_dkl}, where approximations using the sample estimate ($M = 10$) and prior mean approximation ($Y_1 = m$) of the conditional expectation are collected in the left and right plots, respectively.   
The projector obtained from $H^{(\nabla \log f)}_\rho$ with $\rho=\nu$ outperforms all the other methods. In other words---and as observed in the previous example---the method proposed in Section \ref{sec:2} provides more effective dimension reduction than the alternatives. Note also that the projectors obtained from $H^{(\nabla G)}_\rho$ with $\rho=\nu$ and with $\rho=\textrm{Laplace}(\nu)$ perform better than the projector obtained from $H^{(\nabla \log f)}_\rho$ with $\rho=\textrm{Laplace}(\nu)$, indicating that efficient dimension reduction can also be obtained from gradient of the forward model itself.
All other projectors have a rather large accuracy gap relative to the abovementioned projectors. In particular, choosing $\rho = \mu$ in this example seems to be much less effective than averaging over the posterior distribution or its Laplace  approximation.

\section{Conclusion}

We have addressed the problem of reducing the dimension of a Bayesian inverse problem, in the nonlinear\slash non-Gaussian setting.  A Bayesian inverse problem has a low intrinsic dimension when the update from the prior distribution to the posterior distribution is essentially low-dimensional, meaning that the data only inform a few directions in the parameter space. We proposed a methodology that reveals and exploits such structure by seeking an approximation of the likelihood as a ridge function, i.e., a function that varies only on a low-dimensional subspace of its input space. To obtain this approximation, we first identified the optimal profile function of the ridge approximation, meaning the profile that minimizes the Kullback--Leibler divergence from the posterior approximation to the exact posterior.  Then, using logarithmic Sobolev inequalities, we derived an upper bound for the remaining error. The bound admits a simple form and can be easily minimized: this is how the informed directions are discovered. Moreover, our methodology provides a computable upper bound for the approximation error in the posterior distribution, measured in terms of Kullback--Leibler divergence, a quantity that is otherwise difficult to compute in practice.

Our method is fundamentally gradient-based and requires the second moment matrix of the gradient of log-likelihood function. Computing this matrix can be challenging in practice because it entails integrating over the posterior distribution. We thus propose several sample-based approximation schemes, including an iterative algorithm which employs only low-dimensional posterior approximations.
One direction which remains to be explored is the optimal balance between the computational effort used to reduce the dimension of the problem and the computational effort of exploring the final dimension-reduced posterior distribution.

The use of logarithmic Sobolev inequalities requires some (rather strong) assumptions on the prior distribution, e.g., being Gaussian, being a Gaussian mixture, or being a bounded perturbation of a Gaussian. 
One open question left for future work is how to weaken the assumption on the prior so that the resulting theory applies to priors with tails that are heavier than Gaussian tails. Another natural improvement of the methodology is the extension to infinite-dimensional parameter spaces.

Analytical and numerical examples demonstrate good performance of the proposed method. In particular, we show that it outperforms other state-of-the-art gradient-based dimension reduction schemes.
We also note that effective MCMC algorithms for large-scale Bayesian inverse problems, such as the DILI sampler of \cite{cui2016dimension}, fundamentally rely on the low-dimensional structure of the posterior distribution. 
In its original form, \cite{cui2016dimension} uses the LIS method to discover this low-dimensional structure, a method which is less efficient than the one we propose here. Incorporating our new developments in the DILI algorithm (or in any other MCMC algorithm that exploits dimension reduction) could thus yield better sampling performance.
Making this precise---e.g., understanding how the quality of the subspace $\text{Im}(P_r)$ affects MCMC mixing rates---is another important topic for future work.

\section{Appendices}

\subsection{Proof of Proposition \ref{prop:ExplicitCondExp}}\label{proof:ExplicitCondExp}

 We denote by $U_r\in\mathbb{R}^{d\times r}$ a matrix whose columns form a basis of $\text{Im}(P_r)$ and we let $U\in\mathbb{R}^{d\times d}$ be defined as the horizontal concatenation of $U_r$ and $U_\perp$. 
 We have $U\xi = U_r\xi_r + U_\perp \xi_\perp $ for any $\xi\in\mathbb{R}^d$, where $\xi_r\in\mathbb{R}^r$ and $\xi_\perp\in\mathbb{R}^{d-r}$ are the vectors containing respectively the $r$ first and the $d-r$ last components of $\xi$.
 Let $F:x\mapsto \int_{\mathbb{R}^{d-r}} f(P_r x + U_\perp \xi_\perp) ~p_\perp(\xi_\perp| P_r x ) \mathrm{d}\xi_\perp$. By definition of $p_\perp(\xi_\perp| P_r x )$ and since $P_r U_r=U_r$, this function satisfies
 $$
  F(U_r\xi_r) 
  = \frac{\int_{\mathbb{R}^{d-r}} f(U_r\xi_r + U_\perp \xi_\perp) ~ \rho( U_r\xi_r + U_\perp \xi_\perp  ) \mathrm{d}\xi_\perp}{  \int_{\mathbb{R}^{d-r}} \rho( U_r\xi_r + U_\perp \xi_\perp'  ) \mathrm{d}\xi_\perp'},
 $$
 for any $\xi_r\in\mathbb{R}^r$. Let $h$ be $\sigma(P_r)$-measurable function. By Lemma \ref{cor:Doob}, $h$ can be written as the composition of a function with $P_r$, so that $h( U_r\xi_r + U_\perp \xi_\perp ) = h (U_r\xi_r)$ holds for any $\xi_r\in\mathbb{R}^r$ and $\xi_\perp\in\mathbb{R}^{d-r}$. Thus we have
 \begin{align*}
  \int f~h ~\mathrm{d}\mu 
  &= \int_{\mathbb{R}^d} f(x) h(x) \rho(x) \mathrm{d}x 
  = \int_{\mathbb{R}^d} f(U\xi) h(U\xi) \rho(U\xi) |U| \mathrm{d}\xi \\
  &= \int_{\mathbb{R}^r}\int_{\mathbb{R}^{d-r}} f(U_r\xi_r + U_\perp \xi_\perp) h(U_r\xi_r ) \rho(U_r\xi_r + U_\perp \xi_\perp) |U|\mathrm{d}\xi_\perp \mathrm{d} \xi_r \\
  &= \int_{\mathbb{R}^r} \left( \int_{\mathbb{R}^{d-r}} f(U_r\xi_r + U_\perp \xi_\perp) \rho(U_r\xi_r + U_\perp \xi_\perp) \mathrm{d}\xi_\perp \right) h(U_r\xi_r ) |U| \mathrm{d} \xi_r \\
  &= \int_{\mathbb{R}^r} \left( F(U_r\xi_r)  \int_{\mathbb{R}^{d-r}} \rho( U_r\xi_r + U_\perp \xi_\perp') \mathrm{d}\xi_\perp' \right) h(U_r\xi_r ) |U| \mathrm{d} \xi_r \\
  &= \int_{\mathbb{R}^r} \int_{\mathbb{R}^{d-r}} F(U_r\xi_r)   h(U_r\xi_r ) \rho( U_r\xi_r + U_\perp \xi_\perp' ) |U| \mathrm{d}\xi_\perp' \mathrm{d} \xi_r \\
  &= \int_{\mathbb{R}^d} F(x) h(x) \rho(x)  \mathrm{d}x = \int F~h~\mathrm{d}\mu,
 \end{align*}
 where $|U|$ denotes the determinant of $U$. This shows that $F$ satisfies \eqref{eq:CondExp_VarForm}. Since it is a $\sigma(P_r)$-measurable function (as the composition of a function with $P_r$), then it is the conditional expectation $\mathbb{E}_\mu(f|\sigma(P_r))$.

\subsection{Proof of Theorem \ref{prop:subspaceLogSob}}\label{proof:subspaceLogSob}

 In this proof, let $K = \mathrm{supp}(\mu)$.
 Assumption \ref{assu:convexityOfV} implies $\frac{\mathrm{d}\mu}{\mathrm{d}x} =\rho \propto e^{-(V+\Psi)}$ where $V:K\to\mathbb{R}$ and $\Psi:K\to\mathbb{R}$ satisfy respectively \eqref{eq:convexityOfV} and \eqref{eq:kappa}.
 Let $P_r\in\mathbb{R}^{d\times d}$ be a rank-$r$ projector and let $U_\perp\in\mathbb{R}^{d\times (d-r)}$ be a matrix whose columns form a basis of $\text{Ker}(P_r)$. 
 For any $x\in K$, the conditional density $\xi_\perp\mapsto p_\perp(\xi_\perp|P_r x)$ defined by \eqref{eq:defCondDensity} is such that $p_\perp(\cdot | P_rx) \propto e^{-(V_\perp+\Psi_\perp)} $ where $V_\perp$ and $\Psi_\perp$ are functions on $K_\perp(x) = \{\xi_\perp\in\mathbb{R}^{d-r}: U_\perp\xi_\perp + P_rx \in K\}\subset\mathbb{R}^{d-r}$ defined by $V_\perp(\xi_\perp ) = V(P_rx + U_\perp\xi_\perp)$ and $\Psi_\perp(\xi_\perp ) = \Psi(P_rx + U_\perp\xi_\perp)$ for any $\xi_\perp\in K_\perp(x)$. 
 Because $K$ is convex, the set $K_\perp(x)$ is also convex.
 The function $V_\perp$ inherits the convexity property of $V$: using the chain rule, we have $\nabla^2V_\perp(\xi_\perp) = U_\perp^T \nabla^2 V(P_rx + U_\perp\xi_\perp) U_\perp $ so that
 $$
  \nabla^2V_\perp(\xi_\perp) 
  ~\succeq~ 
  U_\perp^T \Gamma U_\perp \succ 0,
 $$
 for any $\xi_\perp \in K_\perp(x)$.
 Also, $\Psi_\perp$ is a bounded function which satisfies
 $$
  e^{\sup \Psi_\perp - \inf \Psi_\perp} \leq e^{\sup \Psi - \inf \Psi} \leq \kappa.
 $$
 We conclude that the distribution on the convex set $K_\perp(x)$ with Lebesgue density $p_\perp(\cdot|P_rx)$ satisfies Assumption \ref{assu:convexityOfV}.
 Then it satisfies the logarithmic Sobolev inequality \eqref{eq:logSob}, meaning that
 \begin{align}
  \int g(\xi_\perp)^2 &\log \frac{g(\xi_\perp)^2}{ \int g(\xi_\perp')^2 p_\perp(\xi_\perp'|P_rx)\mathrm{d}\xi_\perp' } ~ p_\perp(\xi_\perp|P_rx)\mathrm{d}\xi_\perp \nonumber\\
  &\leq 2\kappa \int \| \nabla g(\xi_\perp) \|_{*}^2 ~p_\perp(\xi_\perp|P_rx)\mathrm{d}\xi_\perp , \label{eq:tmp0185}
 \end{align}
 holds for any function $g$ with sufficient regularity, where the norm $\| \cdot \|_{*}$ is defined by $\| v \|_*^2 = v^T (U_\perp^T\Gamma U_\perp)^{-1}v$ for any $v\in\mathbb{R}^{d-r}$. Inequality \eqref{eq:tmp0185} holds in particular when the function $g$ is defined by $g(\xi_\perp) = h(P_rx + U_\perp \xi_\perp)$ for any $\xi_\perp\in K_\perp(x)$, where $h$ is any function on $\mathbb{R}^d$ with sufficient regularity. By the chain rule we have $\nabla g (\xi_\perp) = U_\perp^T \nabla h(P_rx + U_\perp \xi_\perp)$ so that \eqref{eq:tmp0185} can be written as
 \begin{align*}
  \int h(P_rx + U_\perp \xi_\perp)^2 &\log \Big(\frac{h(P_rx + U_\perp \xi_\perp)^2}{ \int h(P_rx + U_\perp \xi_\perp')^2 p_\perp(\xi_\perp'|P_rx)\mathrm{d}\xi_\perp' } \Big) p_\perp(\xi_\perp|P_rx)\mathrm{d}\xi_\perp \\
  &\leq 2 \kappa  \int \| U_\perp^T \nabla h(P_rx + U_\perp \xi_\perp) \|_{*}^2 ~p_\perp(\xi_\perp|P_rx)\mathrm{d}\xi_\perp. 
 \end{align*}
 Note that the denominator in the logarithm is the conditional expectation $\mathbb{E}_\mu(h^2|\sigma(P_r))$ evaluated at $x$, see Proposition \ref{prop:ExplicitCondExp}. Let $U_r\in\mathbb{R}^{d\times r}$ be a matrix whose columns form a basis of $\text{Im}(P_r)$, and let $\xi_r\in K_r(x)=\{\xi_r\in\mathbb{R}^r: U_r\xi_r + (I_d-P_r)x \in K\} \subset\mathbb{R}^r$. Replacing $x$ by $U_r\xi_r$ in the above inequality and using the definition of the conditional density \eqref{eq:defCondDensity}, we obtain
 \begin{align*}
  \int h(U_r\xi_r + & U_\perp \xi_\perp)^2 \log \Big(\frac{h(U_r\xi_r + U_\perp \xi_\perp)^2}{ \mathbb{E}_\mu(h^2|\sigma(P_r))(U_r\xi_r) } \Big) \rho(U_r\xi_r + U_\perp \xi_\perp)\mathrm{d}\xi_\perp\\
  &\leq 2 \kappa  \int \| U_\perp^T \nabla h(U_r\xi_r + U_\perp \xi_\perp) \|_{*}^2 ~\rho(U_r\xi_r + U_\perp \xi_\perp)\mathrm{d}\xi_\perp.
 \end{align*}
 By integrating over $\xi_r$ we obtain
 \begin{equation*}\label{eq:tmp9576}
  \int h^2  \log \frac{h^2}{ \mathbb{E}_\mu(h^2|\sigma(P_r)) } ~\mathrm{d}\mu 
  \leq 2 \kappa \int_{\mathbb{R}^d} \| U_\perp^T \nabla h \|_{*}^2 ~\mathrm{d}\mu.
 \end{equation*}
 To conclude the proof, it remains to show that $\| U_\perp^T \nabla h \|_{*}^2 \leq \| (I_d - P_r)^T \nabla h \|_{\Gamma^{-1}}^2$ holds. By definition of $U_\perp$ we have $(I_d-P_r)U_\perp = U_\perp$ so that $\| U_\perp^T \nabla h \|_{*} = \| U_\perp^T (I_d-P_r)^T\nabla h \|_{*} $. 
 Also, note that $\|\cdot\|_*$ is the dual norm of the norm $\| \cdot \|_{U_\perp^T\Gamma U_\perp}$ defined by $\| \cdot \|_{U_\perp^T\Gamma U_\perp}^2 = (\cdot)^T U_\perp^T\Gamma U_\perp (\cdot)$, meaning that $\|v\|_* = \sup\{ |\xi_\perp^T v| : \xi_\perp\in\mathbb{R}^{d-r} , \|\xi_\perp \|_{U_\perp^T\Gamma U_\perp} \leq 1  \}$ for any $v\in\mathbb{R}^{d-r}$. Thus we have
 \begin{align*}
  \| U_\perp^T \nabla h \|_{*} 
  &= \| U_\perp^T (I_d-P_r)^T\nabla h \|_{*}
  = \sup_{0\neq \xi_\perp \in\mathbb{R}^{d-r} } \frac{| ( U_\perp\xi_\perp) ^T (I_d-P_r)^T\nabla h |}{ \| U_\perp \xi_\perp \|_{\Gamma} } \\
  & \leq \sup_{0\neq x \in \mathbb{R}^d } \frac{| x ^T (I_d-P_r)^T \nabla h |}{ \|x\|_{\Gamma}  } 
  \overset{~y=\Gamma^{1/2}x~}{=} \sup_{0\neq y \in \mathbb{R}^d } \frac{| y ^T \Gamma^{-1/2} (I_d-P_r)^T\nabla h |}{ \|y\| } \\
  &= \|\Gamma^{-1/2}(I_d-P_r)^T\nabla h\| = \|(I_d-P_r^T)\nabla h\|_{\Gamma^{-1}},
 \end{align*}
 where $\|\cdot\|=\sqrt{(\cdot)^T(\cdot)}$ denotes the Euclidean norm of $\mathbb{R}^d$ and $\Gamma^{1/2}$ denotes a symmetric positive-definite square root of $\Gamma$. This concludes the proof.

\subsection{Proof of Proposition \ref{prop:ControlReconstructionError}}\label{proof:ControlReconstructionError}

The proof of Proposition \ref{prop:ControlReconstructionError} requires concentration properties of sub-Gaussian random vectors. We will need the following lemma, which is essentially Theorem 5.39 in \cite{vershynin_2012}.
\begin{lemma}\label{lem:Concentration}
 Let $Z$ be an isotropic sub-Gaussian random vector in $\mathbb{R}^n$, meaning that $\mathbb{E}(Z Z^T) = I_n$, and let $L$ be a constant such that
 \begin{equation}\label{eq:SubGaussianZ}
  \| w^T Z \|_{\psi_2} \coloneqq \sup_{p\geq 1} p^{-1/2} \mathbb{E}(|w^T Z|^p)^{1/p} \leq L \| w \| ,
 \end{equation}
 for any $w\in\mathbb{R}^n$, where $\|\cdot\|$ denotes the Euclidean norm of $\mathbb{R}^n$. Let $\hat\Sigma = \frac{1}{K}\sum_{k=1}^K Z_k Z_k^T$, where $Z_1,\hdots,Z_K$ are $K$ independent copies of Z. Then for every $t\geq 0$, with probability at least $1-2\exp(-ct^2)$, we have
 \begin{equation}\label{eq:Concentration}
  \| \hat\Sigma - I_d \| \leq \max\{ \tau,\tau^2 \} 
  \quad\text{where}\quad
  \tau = C \sqrt{\frac{n}{K}} + \frac{t}{\sqrt{K}},
 \end{equation}
 where $\|\hat\Sigma - I_d\|$ is the spectral norm of $\hat\Sigma - I_d$. Here, $c=L^{-4}c_1 $ and $C=L^2\sqrt{\log (9) / c_1}$, where $c_1$ is an absolute (numerical) constant.
\end{lemma}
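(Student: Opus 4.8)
The plan is to prove this exactly as one proves Theorem 5.39 in \cite{vershynin_2012}: an $\epsilon$-net reduction of the spectral norm, a Bernstein bound for a fixed unit vector, and a union bound over the net, while tracking carefully how the two absolute constants pick up their stated dependence on $L$. Since $\hat\Sigma - I_n$ is symmetric, $\|\hat\Sigma - I_n\| = \sup_{w\in S^{n-1}}|w^{T}(\hat\Sigma - I_n)w|$. First I would fix a $1/4$-net $\mathcal N$ of the Euclidean sphere $S^{n-1}$, whose cardinality obeys the standard covering bound $|\mathcal N|\leq(1+8)^{n}=9^{n}$, and invoke the approximation lemma for symmetric matrices (Lemma 5.4 in \cite{vershynin_2012}), which with $\epsilon = 1/4$ gives $\|\hat\Sigma - I_n\|\leq 2\max_{w\in\mathcal N}|w^{T}(\hat\Sigma - I_n)w|$. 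It therefore suffices to control the maximum over $\mathcal N$ at the level $\tfrac12\max\{\tau,\tau^{2}\}$.

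Next I would fix $w\in\mathcal N$ and write $w^{T}(\hat\Sigma - I_n)w = \frac1K\sum_{k=1}^{K}\big((w^{T}Z_k)^{2}-1\big)$, where the centering is legitimate because isotropy gives $\mathbb{E}\big((w^{T}Z)^{2}\big)=\|w\|^{2}=1$. Each summand is i.i.d.\ and centered; since $\|w^{T}Z_k\|_{\psi_2}\leq L\|w\|=L$ by hypothesis, the square $(w^{T}Z_k)^{2}$ is sub-exponential, and one checks from the moment definition of $\|\cdot\|_{\psi_2}$ that $\|(w^{T}Z_k)^{2}-1\|_{\psi_1}\leq C_0 L^{2}$ for an absolute constant $C_0$. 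Bernstein's inequality for sums of independent centered sub-exponentials (Proposition~5.16 in \cite{vershynin_2012}) then yields, for every $s\geq 0$,
\[
\mathbb{P}\big(\,|w^{T}(\hat\Sigma - I_n)w|\geq s\,\big)\;\leq\;2\exp\!\Big(-\,c_0\,K\,\min\big\{s^{2}/L^{4},\,s/L^{2}\big\}\Big).
\]

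I would then union-bound this over the $9^{n}$ points of $\mathcal N$ and specialize to $s=\tfrac12\max\{\tau,\tau^{2}\}$. Writing $\epsilon=\max\{\tau,\tau^{2}\}$, the algebraic identity $\min\{\epsilon,\epsilon^{2}\}=\tau^{2}$ drives everything: a short case analysis (on whether $\tau\le 1$ and whether $s\lessgtr L^{2}$, using that isotropy forces $L$ to exceed an absolute constant because $\|w^{T}Z\|_{\psi_2}\gtrsim\|w^{T}Z\|_{L^2}=1$) shows the exponent is bounded below by $c_1 K\tau^{2}/L^{4}$. Expanding $\tau^{2}=(C\sqrt{n/K}+t/\sqrt K)^{2}\geq C^{2}n/K+t^{2}/K$ gives $K\tau^{2}\geq C^{2}n+t^{2}$, so after multiplying the per-point probability by $9^{n}=e^{n\log 9}$ the total exponent is at most $n\log 9-c_1(C^{2}n+t^{2})/L^{4}$. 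The prescription $C=L^{2}\sqrt{\log 9/c_1}$ is exactly what makes $c_1 C^{2}/L^{4}=\log 9$, cancelling the net-cardinality term and leaving the bound $2\exp(-c_1 t^{2}/L^{4})=2\exp(-ct^{2})$ with $c=L^{-4}c_1$, as claimed.

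The hard part will be the constant bookkeeping hidden in the $\min$ of Bernstein's bound, i.e.\ verifying that in \emph{both} regimes the exponent really is $\gtrsim K\tau^{2}/L^{4}$ rather than $K\tau^{2}/L^{2}$; this is where the lower bound $L\gtrsim 1$ and the identity $\min\{\epsilon,\epsilon^{2}\}=\tau^{2}$ must be used together, and it is what lets one take the single Bernstein constant as the $c_1$ appearing in both $c$ and $C$. The only other point requiring care is establishing $\|(w^{T}Z)^{2}-1\|_{\psi_1}\leq C_0 L^{2}$ from the moment definition of $\|\cdot\|_{\psi_2}$ while keeping the $L$-dependence explicit; this is routine but should not be glossed over, since the entire statement hinges on the powers $L^{4}$ and $L^{2}$ being correct.
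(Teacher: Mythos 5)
Your proposal is correct and follows essentially the same route as the paper, which simply invokes the proof of Theorem 5.39 in Vershynin's notes and tracks where the sub-Gaussian norm $L$ enters the two constants (via Equation (5.23) there and the choice $C \geq \|Z\|_{\psi_2}^2\sqrt{\log 9/c_1}$ in Step 3 of that proof). You have merely unrolled the cited net--Bernstein--union-bound argument explicitly, and your constant bookkeeping (including the use of $L \gtrsim 1$ from isotropy and the identity $\min\{\epsilon,\epsilon^2\}=\tau^2$) matches what that reference does.
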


\begin{proof}[Proof of Lemma \ref{lem:Concentration}]
 The proof is exactly the one of Theorem 5.39 in \cite{vershynin_2012} with $A\in\mathbb{R}^{K\times d}$ the random matrix whose rows contain the vectors $Z_1,\hdots,Z_K$. By \eqref{eq:SubGaussianZ} the rows of $A$ are independent sub-Gaussian isotropic vectors with sub-Gaussian norm $\|Z\|_{\psi_2}$ smaller than $L$. Then, following \cite{vershynin_2012}, we have that \eqref{eq:Concentration}, which is nothing but Equation (5.23) in \cite{vershynin_2012} with $\hat\Sigma = \frac{1}{K}AA^T$, holds with probability greater than $1-2\exp(- c_1 t^2 \|Z\|_{\psi_2}^{-4})$. Here $c_1$ is an absolute constant coming from Corollary 5.17 in \cite{vershynin_2012}. Since $\|Z\|_{\psi_2}\leq L$, Relation \eqref{eq:Concentration} holds with probability greater than $1-2\exp(- ct^2 )$, where $c=L^{-4}c_1$. Finally, as mentioned at Step 3 of the proof in \cite{vershynin_2012}, any constant $C$ such that $C \geq \|Z\|_{\psi_2}^2\sqrt{\log (9) / c_1}$ gives the result, so we can choose $C = L^2 \sqrt{\log (9) / c_1}$. 
\end{proof}

We now give the proof of Proposition \ref{prop:ControlReconstructionError}.

\begin{proof}
 Assume we are given an approximation $\hat H$ of $H$ such that
 \begin{equation}\label{eq:control_Hhat}
  (1-\delta) H \preceq  \hat H \preceq (1+\delta) H,
 \end{equation}
 for some $\delta<1$. For any $P_r$ we can write $\mathcal{R}_{\Gamma}(P_r,H) = \text{trace}(\Gamma^{-1}(I_d-P_r^T)H (I_d-P_r)) = \text{trace}(HB)$ with $B = (I_d-P_r)\Gamma^{-1}(I_d-P_r^T)$. Since $B$ is a symmetric matrix with $B \succeq 0$, the relation $\text{trace}(H_1B) \leq \text{trace}(H_2B)$ holds for any symmetric matrices $H_1$ and $H_2$ such that $H_1 \preceq H_2$. Then Relation \eqref{eq:control_Hhat} yields \eqref{eq:ControlReconstructionError}. It remains to show that, under the assumptions \eqref{eq:SubgaussianNablaLogF} and \eqref{eq:KlowerBound}, the $K$-sample Monte Carlo estimate $\hat H$ of $H$ satisfies \eqref{eq:control_Hhat} with probability greater than $1-\eta$. 
 \\

 Let $n=\text{rank}(H)$ and let $G\in\mathbb{R}^{d\times n}$ be a full column rank matrix\footnote{For instance $G = U D^{1/2}$ where $D\in\mathbb{R}^{n\times n}$ is the diagonal matrix containing the non-zeros eigenvalues of the SPD matrix $H$ and where $U\in\mathbb{R}^{d\times n}$ is the matrix whose columns are the corresponding eigenvectors.} such that $H=GG^T$. Since $\text{Im}(H) = \text{Im}(G)$ and since $\nabla \log f(X) \in \text{Im}(H)$ almost surely, where $X\sim \nu$, there exists a random vector $Z$ in $\mathbb{R}^n$ such that $GZ = \nabla\log f(X)$. We can write
 $$
  G^TG\mathbb{E}( ZZ^T )G^TG = G^T\mathbb{E}( (GZ)(GZ)^T ) G = G^THG = (G^TG)^2,
 $$
 so that, since $G^TG$ is invertible ($G$ is full column rank), we have $\mathbb{E}( ZZ^T ) = I_n$. Then $Z$ is isotropic. For any $w\in\mathbb{R}^n$, Relation \eqref{eq:SubgaussianNablaLogF} allows writing
 $$
  \| w^T Z \|_{\psi_2}
  = \| u^T \nabla \log f(X) \|_{\psi_2} 
  \leq  L \sqrt{ u^T H u }
  = L \|w\|,
 $$
 where $u\in\mathbb{R}^d$ is any vector such that $w=G^Tu$. Then $Z$ is sub-Gaussian and satisfies \eqref{eq:SubGaussianZ} with the same $L$ as in \eqref{eq:SubgaussianNablaLogF}. Thus Lemma \ref{lem:Concentration} ensures that, for any $t\geq0$ and with a probability greater than $1-2\exp(-ct^2)$, the matrix $\hat\Sigma = \frac{1}{K}\sum_{k=1}^K Z_k Z_k^T$ satisfies \eqref{eq:Concentration}. Let $\delta\in(0,1)$ and assume
 \begin{equation}\label{eq:tmp052785}
  K\geq \delta^{-2} (C \sqrt{n} + t)^{2}.
 \end{equation}
 This is equivalent to $\tau \leq \delta $ where $\tau=C \sqrt{nK^{-1}} + t\sqrt{K^{-1}}$. Since $\delta<1$ we have $\max\{\tau,\tau^2\} = \tau\leq \delta$ so that \eqref{eq:Concentration} yields $(1-\delta)I_n \preceq\hat\Sigma \preceq (1+\delta ) I_n$. By multiplying by $G$ from the left and by $G^T$ from the right, we obtain \eqref{eq:control_Hhat}. Then for any $t\geq0$, the condition \eqref{eq:tmp052785} ensures that \eqref{eq:control_Hhat} holds with probability at least $1-2\exp(-ct^2)$.
 Let $\eta\in(0,1)$ and chose $t$ such that $\eta = 2\exp(-ct^2)$, meaning $t=\sqrt{c^{-1}\log(2\eta^{-1})}$. Since $c = L^{-4}c_1$ and $C=L^2\sqrt{\log (9) / c_1}$ for some absolute constant $c_1$, we can write
 \begin{align*}
  \delta^{-2} (C \sqrt{n} + t)^{2}
  &= \delta^{-2} (C \sqrt{n} + \sqrt{c^{-1}\log(2\eta^{-1})})^{2} \\
  &\leq \Omega \delta^{-2} L^4 (  \sqrt{\text{rank}(H)} + \sqrt{\log(2\eta^{-1})})^{2}
 \end{align*}
 where $\Omega = \log(9)/c_1$. Then \eqref{eq:KlowerBound} implies \eqref{eq:tmp052785} so that \eqref{eq:control_Hhat} holds with probability greater than $1-\eta$, which concludes the proof of Proposition \ref{prop:ControlReconstructionError}.
\end{proof}

\subsection{Proof of Proposition \ref{prop:ExplicitCondExp_muPrime}}\label{proof:ExplicitCondExp_muPrime}

 Let $F : x\mapsto \mathbb{E} ( f(P_r x + (I_d-P_r)Y ) )$ where $Y\sim\mu$ and let $h$ be any $\sigma(P_r)$-measurable function. By Doob-Dynkin's lemma, see Lemma \ref{cor:Doob}, $h$ can be written as a composition of a function with $P_r$, so that $h(P_r x + (I_d-P_r)y) = h(x)$ holds for all $x,y\in\mathbb{R}^d$. $F$ also satisfies the same property. Using these observations and using the definition of $\mu'$, we can write
 \begin{align*}
  \int &  F\,h \,\mathrm{d}\mu' = \int \int F(P_r x + (I_d-P_r)y )  \, h(P_r x + (I_d-P_r)y) \,\mu(\mathrm{d}x)\mu(\mathrm{d}y) \\
  &= \int F( x ) h( x ) \,\mu(\mathrm{d}x) 
  = \int \Big(\int f(P_r x + (I_d-P_r)y ) \mu(\mathrm{d}y) \Big) h(x ) \, \mu(\mathrm{d}x) \\
  &= \int \int f(P_r x + (I_d-P_r)y ) \, h(P_r x +(I_d-P_r)y ) \,\mu(\mathrm{d}x)\mu(\mathrm{d}y) = \int f \,h \,\mathrm{d}\mu' ,
 \end{align*}
 which shows that $F$ is the conditional expectation $\mathbb{E}_{\mu'}(f|\sigma(P_r))$.

\subsection{Proof of Proposition \ref{prop:DecompExpectKL}}\label{proof:DecompExpectKL}
~
Because $f(x)>0$ for $\mu$-a.e.~$x$, we have that $\mathbb{E}_{\mu}(f|\sigma(P_r))(x)>0$ for $\mu$-a.e.~$x$. Also, any realization of $\hat F_r$ defined as in \eqref{eq:MonteCarloCond} satisfies $\hat F_r(x)>0$ for $\mu$-a.e.~$x$.
 Thus, we can decompose $\Dkl(\nu_r^*|| \hat\nu_r) $ as follows
 $$
  \Dkl(\nu_r^*|| \hat\nu_r) 
  = \Dkl(\nu_r^*|| \nu_r')  + \int \log \Big(\frac{\mathbb{E}_{\mu'}(f|\sigma(P_r))}{\hat F_r} \Big)\mathrm{d}\nu_r^* + \log \Big(\frac{\hat Z}{Z'} \Big),
 $$
 where $Z$, $Z'$ and $\hat Z$ are the normalizing constants associated with $\mathbb{E}_{\mu}(f|\sigma(P_r))$, $\mathbb{E}_{\mu'}(f|\sigma(P_r))$ and $\hat F_r$ respectively. Because $\mathbb{E}(\hat Z) = \mathbb{E}(\int \hat F_r \mathrm{d}\mu) = \int \mathbb{E}(\hat F_r )\mathrm{d}\mu = \int \mathbb{E}_{\mu'}(f|\sigma(P_r)) \mathrm{d}\mu = Z'$, Jensen's inequality allows writing $\mathbb{E} (\log (\hat Z)) \leq \log ( \mathbb{E} (\hat Z) ) =  \log (Z') $. Then, taking the expectation in the previous relation yields
 \begin{equation}\label{eq:tmp2986532}
  \mathbb{E}\big( \Dkl(\nu_r^*|| \hat\nu_r) \big) \leq \Dkl(\nu_r^*|| \nu_r') + \mathbb{E} \int \log \Big(\frac{\mathbb{E}_{\mu'}(f|\sigma(P_r))}{\hat F_r} \Big) \, \mathrm{d}\nu_r^* .
 \end{equation}
 Using the second order Taylor expansion $\log(1+x) = x - \frac{1}{2}x^2 + \mathcal{O}(x^3)$ and since $\mathbb{E}(\hat F_r - \mathbb{E}_{\mu'}(f|\sigma(P_r))) = 0$, we can write
 \begin{align*}
  \mathbb{E} \int \log \Big(  &  \frac{\mathbb{E}_{\mu'}(f|\sigma(P_r))}{\hat F_r} \Big)  \, \mathrm{d}\nu_r^* 
  = - \mathbb{E}  \int \log \Big(1+\frac{\hat F_r - \mathbb{E}_{\mu'}(f|\sigma(P_r))}{\mathbb{E}_{\mu'}(f|\sigma(P_r))} \Big) \, \mathrm{d}\nu_r^*  \\
  &= 
  \mathbb{E}  \int \frac{1}{2}  \Big( \frac{\hat F_r - \mathbb{E}_{\mu'}(f|\sigma(P_r))}{\mathbb{E}_{\mu'}(f|\sigma(P_r))} \Big)^2  + \mathcal{O}\left( \Big( \frac{\hat F_r - \mathbb{E}_{\mu'}(f|\sigma(P_r))}{\mathbb{E}_{\mu'}(f|\sigma(P_r))} \Big)^3  \right)  \, \mathrm{d}\nu_r^* ,
 \end{align*}
 which together with \eqref{eq:tmp2986532} conclude the proof.

\subsection{Proof of Proposition \ref{prop:controlMC}}\label{proof:controlMC}

To prove this proposition, we will need the following lemmas and corollaries.

\begin{lemma}\label{lem:boundKLbyChi2}
 Let $\nu_1$ and $\nu_2$ be two probability distributions such that $\frac{\mathrm{d}\nu_1}{\mathrm{d}\mu} \propto f_1$ and $\frac{\mathrm{d}\nu_2}{\mathrm{d}\mu} \propto f_2$. Then we have
 $$
  \Dkl( \nu_1 || \nu_2 ) \leq \frac{Z_2}{Z_1^2}\int \frac{(f_1 - f_2 )^2}{f_2} \mathrm{d}\mu
 $$
 where $Z_1 = \int f_1~\mathrm{d}\mu$ and $Z_2 = \int f_2~\mathrm{d}\mu$ are normalizing constants.
\end{lemma}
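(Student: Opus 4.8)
The plan is to route the bound through the chi-square divergence. Writing $\frac{\mathrm{d}\nu_1}{\mathrm{d}\mu}=f_1/Z_1$ and $\frac{\mathrm{d}\nu_2}{\mathrm{d}\mu}=f_2/Z_2$, I would first establish the standard comparison $\Dkl(\nu_1||\nu_2)\le \chi^2(\nu_1||\nu_2)$, where $\chi^2(\nu_1||\nu_2)=\int(\mathrm{d}\nu_1/\mathrm{d}\nu_2)^2\,\mathrm{d}\nu_2-1$. This follows from the elementary inequality $t\log t\le t^2-t$ for $t>0$ (itself a consequence of $\log t\le t-1$): setting $t=\frac{\mathrm{d}\nu_1}{\mathrm{d}\nu_2}$ and integrating against $\nu_2$ gives $\Dkl(\nu_1||\nu_2)=\int t\log t\,\mathrm{d}\nu_2\le\int(t^2-t)\,\mathrm{d}\nu_2=\chi^2(\nu_1||\nu_2)$, where I use $\int t\,\mathrm{d}\nu_2=\int \mathrm{d}\nu_1=1$.

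Next I would rewrite the chi-square divergence in terms of $f_1,f_2$. Substituting the densities and using $\int f_i\,\mathrm{d}\mu=Z_i$, a short expansion of the square yields
\[
\chi^2(\nu_1||\nu_2)=\frac{Z_2}{Z_1^2}\int \frac{\big(f_1-\tfrac{Z_1}{Z_2}f_2\big)^2}{f_2}\,\mathrm{d}\mu .
\]
The final step is to compare the integrand appearing here with the one in the statement. Viewing $c\mapsto g(c):=\int \frac{(f_1-cf_2)^2}{f_2}\,\mathrm{d}\mu=\int\frac{f_1^2}{f_2}\,\mathrm{d}\mu-2cZ_1+c^2Z_2$ as a convex quadratic in $c$, its minimizer is $c^\star=Z_1/Z_2$, so $g(Z_1/Z_2)\le g(1)$. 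Multiplying through by $Z_2/Z_1^2$ turns this into $\chi^2(\nu_1||\nu_2)\le \frac{Z_2}{Z_1^2}\int\frac{(f_1-f_2)^2}{f_2}\,\mathrm{d}\mu$, which combined with Step 1 gives the claim.

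No step presents a genuine obstacle; the only care needed is the bookkeeping of the normalizing constants $Z_1,Z_2$ and getting the direction of the chi-square comparison right. The one subtlety worth flagging is integrability: the estimate is only meaningful when $\int f_1^2/f_2\,\mathrm{d}\mu<\infty$, in which case all of the manipulations (expanding the square, splitting the integral, and the interchange with the Taylor-type bound of Step 1) are justified by positivity of the integrands; when the right-hand side is infinite the inequality holds trivially, so the statement is unconditional.
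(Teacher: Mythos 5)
Your proof is correct and follows essentially the same route as the paper: both pass through the bound $\Dkl(\nu_1||\nu_2)\leq\chi^2(\nu_1||\nu_2)$ and then observe that replacing the optimal centering constant $Z_1/Z_2$ by $1$ in the quadratic $c\mapsto\int (f_1-cf_2)^2/f_2\,\mathrm{d}\mu$ can only increase its value, which is exactly the paper's computation $\int \frac{(f_1/Z_1-\alpha f_2/Z_2)^2}{f_2/Z_2}\mathrm{d}\mu=\chi^2(\nu_1||\nu_2)+(1-\alpha)^2$ evaluated at $\alpha=Z_2/Z_1$. The only cosmetic difference is that you derive the KL--$\chi^2$ comparison from the pointwise inequality $t\log t\leq t^2-t$, whereas the paper invokes Jensen's inequality; both are standard and valid.
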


\begin{proof}
 Let $\chi^2(\nu_1||\nu_2) = \int \frac{(f_1/Z_1 - f_2/Z_2)^2}{f_2/Z_2} \mathrm{d}\mu$ be the $\chi^2$-divergence from $\nu_2$ to $\nu_1$. Using Jensen's inequality, we have $\Dkl( \nu_1 || \nu_2 ) \leq \chi^2(\nu_1||\nu_2) $.
 For any $\alpha\in\mathbb{R}$ we can write
 $$
  \int \frac{(f_1/Z_1 - \alpha f_2/Z_2)^2}{f_2/Z_2} \mathrm{d}\mu  
  = \int \frac{(f_1/Z_1)^2 }{f_2/Z_2} \mathrm{d}\mu -2\alpha +\alpha^2 
  = \chi^2(\nu_1||\nu_2) + (1-\alpha)^2 .
 $$
 The choice $\alpha=Z_2/Z_1$ yields
 $
  \chi^2(\nu_1||\nu_2)
  \leq \int \frac{(f_1/Z_1 - f_2/Z_1)^2}{f_2/Z_2} \mathrm{d}\mu
 $
 which gives the result.
\end{proof}

\begin{lemma}\label{lem:bound_Expectation_Chi2}
 Let $\mu$ and $\mu'$ be two probability distributions with Lebesgue densities $\rho$ and $\rho'$. Then for any measurable function $h$ we have
 $$
  \Big( \int h ~\mathrm{d}\mu - \int h ~\mathrm{d}\mu' \Big)^2 
  \leq \chi^2(\mu||\mu') \int \Big(h-\int h \mathrm{d}\mu'\Big)^2\mathrm{d}\mu'
 $$
 where $\chi^2(\mu||\mu') = \int \frac{( \rho - \rho'  )^2}{\rho'}\mathrm{d}x $ denotes the $\chi^2$-divergence from $\mu'$ to $\mu$.
\end{lemma}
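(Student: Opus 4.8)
The plan is to recognize this inequality as a centered Cauchy--Schwarz estimate. First I would rewrite the left-hand side as a single integral against the signed measure with density $\rho - \rho'$, namely $\int h \, \mathrm{d}\mu - \int h \, \mathrm{d}\mu' = \int h (\rho - \rho') \, \mathrm{d}x$. The key observation is that $\int (\rho - \rho') \, \mathrm{d}x = 0$, since both $\rho$ and $\rho'$ integrate to one; this allows me to subtract any constant from $h$ without changing the value of the integral. In particular, writing $\bar h = \int h \, \mathrm{d}\mu'$, I obtain $\int h(\rho - \rho') \, \mathrm{d}x = \int (h - \bar h)(\rho - \rho') \, \mathrm{d}x$.

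The second step is to apply the Cauchy--Schwarz inequality after splitting the integrand as $(h - \bar h)\sqrt{\rho'} \cdot (\rho - \rho')/\sqrt{\rho'}$. This yields
\[
 \Big( \int (h - \bar h)(\rho - \rho') \, \mathrm{d}x \Big)^2 \leq \Big( \int (h - \bar h)^2 \rho' \, \mathrm{d}x \Big) \Big( \int \frac{(\rho - \rho')^2}{\rho'} \, \mathrm{d}x \Big),
\]
and the two factors on the right are exactly $\int (h - \bar h)^2 \, \mathrm{d}\mu'$ and $\chi^2(\mu||\mu')$, which gives the claim.

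The only point requiring care is the treatment of the set where $\rho' = 0$. If $\chi^2(\mu||\mu') = \infty$ the inequality holds trivially, so I may assume it is finite, which forces $\mu$ to be absolutely continuous with respect to $\mu'$ (i.e.\ $\rho = 0$ wherever $\rho' = 0$); on that null set the integrand $(\rho - \rho')/\sqrt{\rho'}$ is interpreted as zero, consistent with the convention implicit in the definition of the $\chi^2$-divergence. With this understood, the splitting above is legitimate and the Cauchy--Schwarz step is the entire content of the argument. I do not anticipate any genuine obstacle; the essential idea---and the reason the bound is sharp enough to be useful---is the centering by $\bar h$, without which one would only recover the weaker estimate $\chi^2(\mu||\mu') \int h^2 \, \mathrm{d}\mu'$.
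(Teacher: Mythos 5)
Your proof is correct and follows essentially the same route as the paper's: a Cauchy--Schwarz estimate with the splitting $(h-\bar h)\sqrt{\rho'}\cdot(\rho-\rho')/\sqrt{\rho'}$, where the centering by $\bar h = \int h\,\mathrm{d}\mu'$ is justified because $\rho-\rho'$ integrates to zero (the paper performs the same centering by substituting $h \mapsto h - \int h\,\mathrm{d}\mu'$ after the Cauchy--Schwarz step rather than before). Your additional remarks on the set where $\rho'=0$ are a welcome refinement but do not change the argument.
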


\begin{proof}
 Using Cauchy-Schwarz inequality we can write
 \begin{align*}
  \Big( \int h ~\mathrm{d}\mu - \int h ~\mathrm{d}\mu' \Big)^2 
  &= \Big( \int (h ~\sqrt{\rho'}) \Big(\frac{\rho-\rho'}{\sqrt{\rho'}}\Big) \mathrm{d}x \Big)^2  \\
  &\leq \Big(\int h^2 \rho'~ \mathrm{d}x \Big) \Big( \int \frac{( \rho - \rho'  )^2}{\rho'}\mathrm{d}x \Big) 
 \end{align*}
 Replacing $h$ by $h-\int h\mathrm{d}\mu'$ gives the result.
\end{proof}

\begin{lemma}\label{lem:Chi2control}
 Assume $\rho(x) = \exp( -\frac{1}{2}\|x-m\|^2_{\Gamma} - \Psi(x) )$ where $\Psi$ is a bounded function such that
 $\exp( \sup \Psi - \inf \Psi ) \leq \kappa$ and $m\in\mathbb{R}^d$. Given an $\|\cdot\|_\Gamma$-orthogonal projector $P_r$, let $U_r\in\mathbb{R}^{d\times r}$ and $U_\perp\in\mathbb{R}^{d\times (d-r)}$ be two matrices whose columns form a basis of $\text{Im}(P_r)$ and $\text{Ker}(P_r)$ respectively. Let
 \begin{align*}
  p_\perp(\xi_\perp) &= \frac{1}{C} \int_{\mathbb{R}^{r}} \rho( U_r\xi_r' + U_\perp\xi_\perp ) \mathrm{d}\xi_r' \\
  p_r(\xi_r) &= \frac{1}{C} \int_{\mathbb{R}^{d-r}} \rho( U_r\xi_r + U_\perp\xi_\perp' ) \mathrm{d}\xi_\perp' ,
 \end{align*}
 where $C = \int_{\mathbb{R}^{r}\times\mathbb{R}^{d-r}} \rho( U_r\xi_r' + U_\perp\xi_\perp' ) \mathrm{d}\xi_r'\mathrm{d}\xi_\perp'$.
 Then we have
 \begin{equation}\label{eq:tmp0000}
  \kappa^{-2} p_r(\xi_r)p_\perp(\xi_\perp)
  \leq
   \frac{1}{C}~\rho(U_r\xi_r  + U_\perp \xi_\perp  )
  \leq
  \kappa^{2} p_r(\xi_r)p_\perp(\xi_\perp).
 \end{equation}
 Furthermore the relations $$\kappa^{-2} p_\perp(\xi_\perp)\leq p_\perp(\xi_\perp|P_rx) \leq \kappa^{2} p_\perp(\xi_\perp) ,$$ and $$\chi^2( p_\perp(\cdot|P_rx) || p_\perp ) \leq \kappa^2 -1 ,$$ hold for any $x\in\mathbb{R}^d$, where
 $$
  p_\perp(\xi_\perp|P_r x) = \frac{\rho( P_r x + U_\perp \xi_\perp  )}{\int_{\mathbb{R}^{d-r}} \rho( P_r x + U_\perp \xi_\perp'  ) \mathrm{d}\xi_\perp'}.
 $$
\end{lemma}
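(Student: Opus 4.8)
The plan is to exploit a single structural fact: because $P_r$ is $\|\cdot\|_\Gamma$-orthogonal, the Gaussian part of $\rho$ factorizes into a product of a function of $P_r x$ and a function of $(I_d-P_r)x$. Writing $x = U_r\xi_r + U_\perp\xi_\perp$, so that $P_r x = U_r\xi_r$ and $(I_d-P_r)x = U_\perp\xi_\perp$, and decomposing the mean as $m = P_r m + (I_d-P_r)m$, the defining identity of a $\Gamma$-orthogonal projector gives
$$ \|x-m\|_\Gamma^2 = \|U_r\xi_r - P_r m\|_\Gamma^2 + \|U_\perp\xi_\perp - (I_d-P_r)m\|_\Gamma^2 . $$
Hence $\exp(-\tfrac12\|x-m\|_\Gamma^2) = g_r(\xi_r)\,g_\perp(\xi_\perp)$, where $g_r$ and $g_\perp$ are the two unnormalized Gaussian factors defined by the two terms on the right. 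I would then set $G_r = \int g_r\,\mathrm{d}\xi_r$ and $G_\perp = \int g_\perp\,\mathrm{d}\xi_\perp$, record the elementary two-sided bound $e^{-\sup\Psi} \le e^{-\Psi} \le e^{-\inf\Psi}$ (whose ratio is at most $\kappa$ by hypothesis), and integrate $\rho = g_r\,g_\perp\,e^{-\Psi}$ to obtain $e^{-\sup\Psi}\,G_r G_\perp \le C \le e^{-\inf\Psi}\,G_r G_\perp$.

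To prove \eqref{eq:tmp0000}, I would sandwich each of the three quantities $\tfrac1C\rho$, $p_r$, and $p_\perp$ between $e^{-\sup\Psi}/C$ and $e^{-\inf\Psi}/C$ times $g_r g_\perp$ (respectively $g_r G_\perp$ and $g_\perp G_r$), simply by replacing $e^{-\Psi}$ under the relevant integral by its extreme values. Forming the ratio $\tfrac1C\rho/(p_r p_\perp)$ and combining these sandwich bounds with the bound on $C$ collapses every factor $g_r$, $g_\perp$, $G_r$, $G_\perp$, leaving a pure power of $e^{\sup\Psi-\inf\Psi}$; tracking the exponents gives the upper bound $e^{2(\sup\Psi-\inf\Psi)} \le \kappa^2$ and, by the symmetric replacement, the lower bound $\kappa^{-2}$, which is exactly \eqref{eq:tmp0000}.

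For the bound on $p_\perp(\cdot\,|P_r x)$, I would use $P_r x \in \text{Im}(P_r)$, so $P_r x = U_r\eta_r$ for some $\eta_r$; substituting into the definition and invoking the factorization, the factor $g_r(\eta_r)$ appears in both numerator and denominator and cancels, leaving
$$ p_\perp(\xi_\perp|P_r x) = \frac{g_\perp(\xi_\perp)\,e^{-\Psi(U_r\eta_r+U_\perp\xi_\perp)}}{\int g_\perp(\xi_\perp')\,e^{-\Psi(U_r\eta_r+U_\perp\xi_\perp')}\,\mathrm{d}\xi_\perp'} . $$
Replacing $e^{-\Psi}$ by its extremes in numerator and denominator shows this lies between $\kappa^{-1}g_\perp/G_\perp$ and $\kappa\,g_\perp/G_\perp$; the same sandwich bounds applied to $p_\perp$ (already derived above) show $p_\perp$ lies in the same interval, and dividing yields $\kappa^{-2}p_\perp \le p_\perp(\cdot\,|P_r x)\le \kappa^2 p_\perp$. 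The $\chi^2$ claim then follows with no further computation: expanding $\chi^2(p_\perp(\cdot|P_rx)\|p_\perp) = \int \frac{p_\perp(\cdot|P_rx)^2}{p_\perp}\,\mathrm{d}\xi_\perp - 1$ and inserting the pointwise bound $p_\perp(\cdot|P_rx)/p_\perp \le \kappa^2$ inside the integral (which integrates $p_\perp(\cdot|P_rx)$ to $1$) gives $\chi^2 \le \kappa^2-1$.

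The only genuine obstacle is the first, conceptual step: recognizing and justifying the product factorization of the Gaussian, which rests entirely on the $\|\cdot\|_\Gamma$-orthogonality of $P_r$. This is precisely why the proposition strengthens Assumption \ref{assu:convexityOfV} to an exactly quadratic $V$ together with a $\Gamma$-self-adjoint projector, rather than the general strongly log-concave $V$ allowed earlier. Once that factorization is secured, everything else is bookkeeping: each estimate amounts to replacing the bounded factor $e^{-\Psi}$ by $e^{-\sup\Psi}$ or $e^{-\inf\Psi}$, so the only care required is in counting how many such replacements fall in the numerator versus the denominator and thereby tracking the resulting powers of $\kappa$.
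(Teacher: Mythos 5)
Your proof is correct and follows essentially the same route as the paper's: both rest on the factorization $\|x-m\|_\Gamma^2 = \|P_r(x-m)\|_\Gamma^2 + \|(I_d-P_r)(x-m)\|_\Gamma^2$ afforded by the $\Gamma$-orthogonality of $P_r$, followed by sandwiching $e^{-\Psi}$ between $e^{-\sup\Psi}$ and $e^{-\inf\Psi}$ in each integral and tracking the resulting powers of $\kappa$, with the same final expansion of the $\chi^2$-divergence. The only cosmetic differences are that the paper reduces to $m=0$ without loss of generality where you carry the mean through the decomposition, and that the paper obtains the conditional-density bound by dividing \eqref{eq:tmp0000} by $p_r(\xi_r)$ rather than by sandwiching $p_\perp(\cdot|P_rx)$ and $p_\perp$ separately against $g_\perp/G_\perp$; both yield the same $\kappa^{\pm 2}$ constants.
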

\begin{proof}
 Assume for simplicity that $m=0$. (Replacing $x$ with $x-m$ at the end of the proof will cover the case $m\neq0$.)
 Let $\alpha = \exp(-\sup\Psi)$ and $\beta = \exp(-\inf\Psi)$ so that $\beta/\alpha\leq\kappa$. For any $\xi_r\in\mathbb{R}^r$, $\xi_\perp\in\mathbb{R}^{d-r}$ we have
 \begin{equation}\label{eq:tmp0001}
  \alpha~ \exp( -\frac{1}{2}\|U_r\xi_r + U_\perp \xi_\perp \|^2_{\Gamma}  )
   \leq 
   \rho(U_r\xi_r  + U_\perp \xi_\perp  ) 
   \leq 
   \beta~ \exp( -\frac{1}{2}\|U_r\xi_r + U_\perp \xi_\perp \|^2_{\Gamma} ).
 \end{equation}
 Using the fact that $\|U_r\xi_r + U_\perp \xi_\perp \|^2_{\Gamma} = \|U_r\xi_r\|^2_{\Gamma} + \|U_\perp \xi_\perp \|^2_{\Gamma}$, and integrating with respect to either $\xi_r$ or $\xi_\perp$, Equation \eqref{eq:tmp0001} yields
 \begin{align}
  \alpha C_r \exp( -\frac{1}{2}\|U_\perp\xi_\perp\|^2_{\Gamma}  )
   \leq 
   \underbrace{\int_{\mathbb{R}^{r}} \rho(U_r\xi_r' + U_\perp \xi_\perp ) \mathrm{d}\xi_r'}_{ p_\perp(\xi_\perp) C}
   \leq 
   \beta C_r \exp( -\frac{1}{2}\|U_\perp\xi_\perp\|^2_{\Gamma}  )   \label{eq:tmp0002}
   \\
   \alpha C_\perp \exp( -\frac{1}{2}\|U_r\xi_r\|^2_{\Gamma}  )
   \leq 
   \underbrace{\int_{\mathbb{R}^{d-r}} \rho(U_r\xi_r + U_\perp \xi_\perp' ) \mathrm{d}\xi_\perp'}_{ p_r(\xi_r) C}
   \leq 
   \beta C_\perp \exp( -\frac{1}{2}\|U_r\xi_r \|^2_{\Gamma}  ) ,   \label{eq:tmp0003}
 \end{align}
 where $C_r=\int_{\mathbb{R}^{d-r}} \exp( -\frac{1}{2}\|U_r \xi_r' \|^2_{\Gamma} ) \mathrm{d}\xi_r'$ and $C_\perp = \int_{\mathbb{R}^{d-r}}\exp(- \frac{1}{2}\|U_\perp \xi_\perp' \|^2_{\Gamma} )  \mathrm{d}\xi_\perp' $. By integrating \eqref{eq:tmp0003} over $\xi_r$ we get
 \begin{equation}\label{eq:tmp0004}
   \alpha C_r C_\perp 
   \leq 
   C = \int_{\mathbb{R}^{r}\times\mathbb{R}^{d-r}} \rho(U_r\xi_r' + U_\perp \xi_\perp' ) \mathrm{d}\xi_r'\mathrm{d}\xi_\perp'
   \leq 
   \beta C_r C_\perp .
 \end{equation}
 Combining \eqref{eq:tmp0001}, \eqref{eq:tmp0002}, \eqref{eq:tmp0003} we obtain
  \begin{align*}
 \frac{1}{C}\rho(U_r\xi_r  + U_\perp \xi_\perp  ) 
   &\makebox[5em]{$\overset{\eqref{eq:tmp0001}}{\leq}$} \frac{\beta}{C}~ \exp( -\frac{1}{2}\|U_r\xi_r  \|^2_{\Gamma} ) \exp( -\frac{1}{2}\| U_\perp \xi_\perp \|^2_{\Gamma} ) \\
   &\makebox[5em]{$\overset{\eqref{eq:tmp0002}\&\eqref{eq:tmp0003}}{\leq}$} \frac{\beta}{C}  \left( \frac{C }{\alpha C_\perp}  p_r(\xi_r) \right)  \left(\frac{C }{\alpha C_r} p_\perp(\xi_\perp) \right) \\
   &\makebox[5em]{$\overset{\eqref{eq:tmp0004}}{\leq}$} \left(\frac{\beta }{\alpha }\right)^2   p_\perp(\xi_\perp)p_r(\xi_r) \leq \kappa^2   p_\perp(\xi_\perp)p_r(\xi_r),
 \end{align*}
 which gives the right inequality of \eqref{eq:tmp0000}. The left inequality of \eqref{eq:tmp0000} is obtained in a similar way.
 Finally, dividing \eqref{eq:tmp0000} by $p_r(\xi_r)$ and letting $\xi_r$ such that $U_r\xi_r = P_rx$ for some $x\in\mathbb{R}^d$, we can write $\kappa^{-2} p_\perp(\xi_\perp)\leq p_\perp(\xi_\perp|P_rx) \leq \kappa^{2} p_\perp(\xi_\perp) $ and then
 \begin{align*}
  \chi^2( p_\perp(\cdot|P_rx) || p_\perp ) 
  &= \int \frac{( p_\perp(\xi_\perp|P_rx) - p_\perp(\xi_\perp))^2}{p_\perp(\xi_\perp)} \mathrm{d}\xi_\perp 
  =  \int \frac{p_\perp(\xi_\perp|P_rx)^2}{p_\perp(\xi_\perp)} \mathrm{d}\xi_\perp -1  \\
  &= \int \frac{p_\perp(\xi_\perp|P_rx)}{p_\perp(\xi_\perp)} p_\perp(\xi_\perp|P_rx)\mathrm{d}\xi_\perp -1 \leq \kappa^2-1 .
 \end{align*}
 This concludes the proof.

\end{proof}

As first observed in \cite{rothaus1978lower}, a distribution which satisfies the logarithmic Sobolev inequality also satisfies the Poincaré inequality; see for instance Proposition 5.1.3 in \cite{bakry2013analysis}. The next proposition shows that the subspace logarithmic Sobolev inequality \eqref{eq:subspaceLogSob} implies the subspace Poincaré inequality.

\begin{corollary}[Subspace Poincaré inequality]\label{cor:subspacePoincare}
 Let $\mu$ be a probability distribution which satisfies the subspace logarithmic Sobolev inequality \eqref{eq:subspaceLogSob} for any continuously differentiable function $h$. Then $\mu$ satisfies the following \emph{subspace Poincaré inequality}:
 \begin{equation}\label{eq:subspacePoincare}
  \int \big( h - \mathbb{E}_{\mu}(h|\sigma(P_r)) \big)^2 \mathrm{d}\mu\leq \kappa \int \| (I_d - P_r^T)\nabla h \|_{\Gamma^{-1}}^2 \mathrm{d}\mu,
 \end{equation}
 for any continuously differentiable function $h$.
\end{corollary}
\begin{proof}
 Replacing $h$ with $1+\varepsilon h $ in the subspace logarithmic Sobolev inequality \eqref{eq:subspaceLogSob} we obtain
 \begin{equation}\label{eq:tmp7324941}
  \int (1+\varepsilon h)^2 \log \frac{(1+\varepsilon h)^2}{\mathbb{E}_\mu( (1+\varepsilon h)^2 |\sigma(P_r))} ~\mathrm{d}\mu 
 \leq 2 \varepsilon^2 \kappa \int \| (I_d - P_r^T)\nabla h \|_{\Gamma^{-1}}^2 ~\mathrm{d}\mu.
 \end{equation}
 A Taylor expansion of the logarithm as $\varepsilon$ goes to zero permits us to write
 $$
  \log(1+\varepsilon h)^2 = 2 \varepsilon h - \varepsilon^2 h^2 + \mathcal{O}(\varepsilon^3),
 $$
 and
 \begin{align*}
  \log\mathbb{E}_\mu( (1+\varepsilon h)^2 |\sigma(P_r)) 
  &= 2\varepsilon\mathbb{E}_\mu(h |\sigma(P_r)) +\varepsilon^2\mathbb{E}_\mu(h^2 |\sigma(P_r)) 
  - 2\varepsilon^2\mathbb{E}_\mu(h |\sigma(P_r))^2 + \mathcal{O}(\varepsilon^3) .
 \end{align*}
 Thus, the left-hand side of \eqref{eq:tmp7324941} becomes
 \begin{align*}
  \int  & (1+\varepsilon h)^2 \log\frac{(1+\varepsilon h)^2}{\mathbb{E}_\mu( (1+\varepsilon h)^2 |\sigma(P_r))} ~\mathrm{d}\mu\\
  &=  \int \big(1+2\varepsilon h + \mathcal{O}(\varepsilon^2)\big) \Bigg[2\varepsilon\Big(h-\mathbb{E}_\mu(h |\sigma(P_r))\Big) \\
  &\qquad\qquad\qquad+\varepsilon^2\Big(-h^2-\mathbb{E}_\mu(h^2 |\sigma(P_r)) + 2\mathbb{E}_\mu(h |\sigma(P_r))^2\Big) + \mathcal{O}(\varepsilon^3) \Bigg] ~\mathrm{d}\mu \\  
  &=  \varepsilon \int  2 \Big(h-\mathbb{E}_\mu(h |\sigma(P_r))\Big) ~\mathrm{d}\mu \\
  &+ \varepsilon^2 \int 4\Big(h^2 - h \mathbb{E}_\mu(h |\sigma(P_r))\Big)+\Big(-h^2-\mathbb{E}_\mu(h^2 |\sigma(P_r)) + 2\mathbb{E}_\mu(h |\sigma(P_r))^2\Big)~\mathrm{d}\mu \\
  &+\mathcal{O}(\varepsilon^3).
 \end{align*}
 Using the identities $\int \mathbb{E}_\mu(  h |\sigma(P_r)) \mathrm{d}\mu = \int h \mathrm{d}\mu$,  $\int \mathbb{E}_\mu(  h^2 |\sigma(P_r)) \mathrm{d}\mu = \int h^2 \mathrm{d}\mu$ and $\int h \mathbb{E}_\mu(  h |\sigma(P_r))  \mathrm{d}\mu=\int \mathbb{E}_\mu(  h |\sigma(P_r))^2 \mathrm{d}\mu$, the above equality simplifies to
 \begin{align*}
  \int  (1+\varepsilon h)^2 \log\frac{(1+\varepsilon h)^2}{\mathbb{E}_\mu( (1+\varepsilon h)^2 |\sigma(P_r))} ~\mathrm{d}\mu
  &= \varepsilon^2 \int 2h^2 - 2 \mathbb{E}_\mu(h |\sigma(P_r))~\mathrm{d}\mu +\mathcal{O}(\varepsilon^3) \\
  &= 2 \varepsilon^2 \int \big(h- \mathbb{E}_\mu(h |\sigma(P_r))\big)^2~\mathrm{d}\mu +\mathcal{O}(\varepsilon^3).
 \end{align*}
 Substituting this relation into \eqref{eq:tmp7324941} yields \eqref{eq:subspacePoincare}. This concludes the proof.
\end{proof}

We now have all the material to prove Proposition \ref{prop:controlMC}.

\begin{proof}

 We first show that \eqref{eq:controlMC_Bais} holds. By Lemma \ref{lem:boundKLbyChi2} we have
\begin{equation}\label{eq:tmp05721}
 \Dkl(\nu_r^*||\nu_r') \leq \frac{Z'}{Z^2} \int \frac{( \mathbb{E}_{\mu}(f|\sigma(P_r)) - \mathbb{E}_{\mu'}(f|\sigma(P_r)) )^2}{\mathbb{E}_{\mu'}(f|\sigma(P_r))} \mathrm{d}\mu,
\end{equation}
where $Z$ and $Z'$ are normalizing constants associated with $\mathbb{E}_{\mu}(f|\sigma(P_r))$ and $\mathbb{E}_{\mu'}(f|\sigma(P_r))$ respectively. Fix $x\in\mathbb{R}^d$. Recall that Proposition \ref{prop:ExplicitCondExp} yields
\begin{equation}
 \mathbb{E}_{\mu}(f|\sigma(P_r))(x) = \int_{\mathbb{R}^{d-r}} f(P_rx + U_\perp \xi_\perp ) p_\perp(\xi_\perp|P_rx)\mathrm{d}\xi_\perp \label{eq:tmp602561} 
\end{equation}
and using $p_\perp(\xi_\perp)$ defined in Lemma \ref{lem:Chi2control}, we have
\begin{equation}
 \mathbb{E}_{\mu'}(f|\sigma(P_r))(x) = \int_{\mathbb{R}^{d-r}} f(P_rx + U_\perp \xi_\perp ) p_\perp(\xi_\perp)\mathrm{d}\xi_\perp \label{eq:tmp602562}
\end{equation}
by applying the definition of conditional expectation in Proposition \ref{prop:ExplicitCondExp_muPrime}. 
Then, by Lemma \ref{lem:Chi2control}, the relation
\begin{equation}\label{eq:tmpCondExpNutoNuPrime}
 \kappa^{-2} ~\mathbb{E}_{\mu'}(f|\sigma(P_r))(x) \leq \mathbb{E}_{\mu}(f|\sigma(P_r))(x) \leq \kappa^2 ~\mathbb{E}_{\mu'}(f|\sigma(P_r))(x),
\end{equation}
holds for any $x$. Also, notice that both $\mathbb{E}_{\mu}(f|\sigma(P_r))(x)$ and $\mathbb{E}_{\mu'}(f|\sigma(P_r))(x)$ can be written as an expectation of $\xi_\perp\mapsto f(P_rx+U_\perp\xi_\perp)$ with respect to the probability densities $p_\perp(\cdot|P_rx)$ and $p_\perp$ respectively. Then, using Lemma \ref{lem:bound_Expectation_Chi2} and Lemma \ref{lem:Chi2control}, we have
\begin{align*}
 \big( \mathbb{E}_{\mu}( &  f|\sigma(P_r))(x) - \mathbb{E}_{\mu'}(f|\sigma(P_r))(x) \big)^2 \\
 &\leq (\kappa^2-1) \int_{\mathbb{R}^{d-r}} \big( f(P_rx+U_\perp\xi_\perp)- \mathbb{E}_{\mu'}(f|\sigma(P_r))(x) \big)^2 p_\perp(\xi_\perp)\mathrm{d}\xi_\perp \\
 &= (\kappa^2-1) \int \big( f(P_rx+(I_d-P_r)y)- \mathbb{E}_{\mu'}(f|\sigma(P_r))(x) \big)^2 \mu(\mathrm{d}y).
\end{align*}
Because $f(x)>0$ for $\mu$-a.e.\ $x$, we can divide by $\mathbb{E}_{\mu'}(f|\sigma(P_r))(x)>0$ and integrate with respect to $\mathrm{d}\mu(x)$ in order to obtain
\begin{align}
 \int & \frac{ ( \mathbb{E}_{\mu}(f|\sigma(P_r))(x)  - \mathbb{E}_{\mu'}(f|\sigma(P_r))(x) )^2}{\mathbb{E}_{\mu'}(f|\sigma(P_r))(x)} \mathrm{d}\mu(\mathrm{d}x) \nonumber\\
 &\leq (\kappa^2-1) \int \int \frac{(f(P_rx+(I_d-P_r)y)-\mathbb{E}_{\mu'}(f|\sigma(P_r))(x) )^2}{\mathbb{E}_{\mu'}(f|\sigma(P_r))(x)}  \mu(\mathrm{d}y)\mu(\mathrm{d}x) \label{eq:tmp9672}\\
 &= (\kappa^2-1) \int \int \frac{(f(P_rx+(I_d-P_r)y)-\mathbb{E}_{\mu'}(f|\sigma(P_r))(P_rx+(I_d-P_r)y) )^2}{\mathbb{E}_{\mu'}(f|\sigma(P_r))(P_rx+(I_d-P_r)y)}  \mu(\mathrm{d}y)\mu(\mathrm{d}x) \label{eq:tmp6927}\\
 &= (\kappa^2-1) \int \frac{(f-\mathbb{E}_{\mu'}(f|\sigma(P_r)) )^2}{\mathbb{E}_{\mu'}(f|\sigma(P_r))}  \mathrm{d}\mu' \label{eq:tmp096572}
\end{align}

Going from \eqref{eq:tmp9672} to \eqref{eq:tmp6927}, we used the fact that, since $\mathbb{E}_{\mu'}(f|\sigma(P_r))$ is a $\sigma(P_r)$-measurable function, it can be writen as the composition of some function with $P_r$, so that the relation $\mathbb{E}_{\mu'}(f|\sigma(P_r))(x) = \mathbb{E}_{\mu'}(f|\sigma(P_r))(P_rx+(I_d-P_r)y)$ holds for any $x,y\in\mathbb{R}^d$. To obtain \eqref{eq:tmp096572}, we used the definition of $\mu'$, see Equation \eqref{eq:defMuPrime}. The last term in \eqref{eq:tmp096572} satisfies
\begin{align}
 \int \frac{(f-\mathbb{E}_{\mu'}(f|\sigma(P_r)) )^2}{\mathbb{E}_{\mu'}(f|\sigma(P_r))}  & \mathrm{d}\mu'
 = \int \Big( \frac{f}{\mathbb{E}_{\mu'}(f|\sigma(P_r))^{1/2}}-\mathbb{E}_{\mu'}(f|\sigma(P_r))^{1/2} \Big)^2  \mathrm{d}\mu' \label{eq:tmp123_1}\\
 &\leq \int \Big( \frac{f}{\mathbb{E}_{\mu'}(f|\sigma(P_r))^{1/2}}-\frac{\mathbb{E}_{\mu}(f|\sigma(P_r))}{\mathbb{E}_{\mu'}(f|\sigma(P_r))^{1/2}} \Big)^2  \mathrm{d}\mu' \label{eq:tmp123_2}\\
 &\leq \kappa^2 \int \Big( \frac{f}{\mathbb{E}_{\mu'}(f|\sigma(P_r))^{1/2}}-\frac{\mathbb{E}_{\mu}(f|\sigma(P_r))}{\mathbb{E}_{\mu'}(f|\sigma(P_r))^{1/2}} \Big)^2  \mathrm{d}\mu \label{eq:tmp123_31}\\
 &\leq \kappa^4 \int \Big( \frac{f}{\mathbb{E}_{\mu}(f|\sigma(P_r))^{1/2}}-\frac{\mathbb{E}_{\mu}(f|\sigma(P_r))}{\mathbb{E}_{\mu}(f|\sigma(P_r))^{1/2}} \Big)^2  \mathrm{d}\mu \label{eq:tmp123_3}\\
 &= \kappa^4 \int \Big( \frac{f}{\mathbb{E}_{\mu}(f|\sigma(P_r))^{1/2}}- \mathbb{E}_{\mu}(f|\sigma(P_r))^{1/2} \Big)^2  \mathrm{d}\mu \label{eq:tmp123_4}\\
 &\leq \kappa^5 \int \|(I_d-P_r^T)\nabla \Big( \frac{f}{\mathbb{E}_{\mu}(f|\sigma(P_r))^{1/2}} \Big) \|_{\Gamma^{-1}}^2 \mathrm{d}\mu \label{eq:tmp123_5} \\
 &= \kappa^5 \int \|(I_d-P_r^T)\nabla \log f \|_{\Gamma^{-1}}^2 \frac{f^2}{\mathbb{E}_{\mu}(f|\sigma(P_r))} \mathrm{d}\mu  \label{eq:tmp123_6}
\end{align}
Let us explain the previous steps. 
\begin{itemize}
 
 \item To go from \eqref{eq:tmp123_1} to \eqref{eq:tmp123_2} notice that $\mathbb{E}_{\mu'}(f|\sigma(P_r))^{1/2}$ is a $\sigma(P_r)$-measurable function which satisfies $\mathbb{E}_{\mu'}(f|\sigma(P_r))^{1/2} = \mathbb{E}_{\mu'}( f\mathbb{E}_{\mu'}(f|\sigma(P_r))^{-1/2} |\sigma(P_r)) $. In particular, $\mathbb{E}_{\mu'}(f|\sigma(P_r))^{1/2}$ is a best approximation of $f\mathbb{E}_{\mu'}(f|\sigma(P_r))^{-1/2}$ among all $\sigma(P_r)$-measurable functions with respect to the $L^2_{\mu'}$-norm. Then \eqref{eq:tmp123_2} follows from the fact that $\frac{\mathbb{E}_{\mu}(f|\sigma(P_r))}{\mathbb{E}_{\mu'}(f|\sigma(P_r))^{1/2}}$ is $\sigma(P_r)$-measurable.
 
 \item To go from \eqref{eq:tmp123_2} to \eqref{eq:tmp123_3} we apply Lemma \ref{lem:Chi2control} twice as follows. First, since the integrand is positive, \eqref{eq:tmp0000} permits us to go from \eqref{eq:tmp123_2} to \eqref{eq:tmp123_31}. Second, with the same argument, \eqref{eq:tmp0000} yields $\mathbb{E}_{\mu}(f|\sigma(P_r)) \leq \kappa^{2} \mathbb{E}_{\mu'}(f|\sigma(P_r))$, which permits us to go from \eqref{eq:tmp123_31} to \eqref{eq:tmp123_3}.
 
 \item From \eqref{eq:tmp123_4} to \eqref{eq:tmp123_5}, we applied the subspace Poincaré inequality \eqref{eq:subspacePoincare} with $h= f\mathbb{E}_{\mu}(f|\sigma(P_r)^{-1/2}$. Indeed, since $\mathbb{E}_{\mu}( h |\sigma(P_r)) = \mathbb{E}_{\mu}(f|\sigma(P_r))^{1/2} $, the right-hand side of \eqref{eq:tmp123_4} can be written as $\kappa^{4}\int(h-\mathbb{E}_{\mu}( h |\sigma(P_r)))^2\mathrm{d}\mu$, which is $\kappa^{4}$ times the left hand side of \eqref{eq:subspacePoincare}.

\end{itemize}
Together with \eqref{eq:tmp05721} and \eqref{eq:tmp096572}, \eqref{eq:tmp123_6} yields 
\begin{equation}\label{eq:tmp5914}
 \Dkl(\nu_r^*||\hat\nu_r) \leq  \kappa^5 (\kappa^2-1) \frac{Z'}{Z}\int \|(I_d-P_r^T)\nabla \log f \|_{\Gamma^{-1}}^2 \frac{f}{\mathbb{E}_{\mu}(f|\sigma(P_r))} \frac{f}{Z}\mathrm{d}\mu .
\end{equation}
By integrating \eqref{eq:tmpCondExpNutoNuPrime} with respect to $\mathrm{d}\mu(x)$ we obtain $Z'\leq \kappa^2 Z$, so that \eqref{eq:tmp5914} yields \eqref{eq:controlMC_Bais}.
\\

We now show that \eqref{eq:controlMC_MeanSquaredError} holds. First note that, for any $x\in\mathbb{R}^d$, the independence of the $Y_i$'s in the definition \eqref{eq:MonteCarloCond} of $\hat F_r$ ensures that
\begin{align*}
 \mathbb{E} \Big(\Big(  & \frac{\hat F_r(x) - \mathbb{E}_{\mu'}(f|\sigma(P_r))(x)}{\mathbb{E}_{\mu'}(f|\sigma(P_r))(x)}   \Big)^2\Big) \\
 &= \frac{1}{M} \int \Big(  \frac{f(P_rx+(I_d-P_r)y) - \mathbb{E}_{\mu'}(f|\sigma(P_r))(x)}{\mathbb{E}_{\mu'}(f|\sigma(P_r))(x)}   \Big)^2  \mu(\mathrm{d}y).
\end{align*}
Multiplying by $\mathbb{E}_{\mu}(f|\sigma(P_r))(x)/Z$ and taking the expectation with respect to $x\sim \mu$ we have
\begin{align}
 \int  & \frac{\mathbb{E}_{\mu}(f|\sigma(P_r))}{Z}   \mathbb{E}  \Big(  \frac{\hat F_r - \mathbb{E}_{\mu'}(f|\sigma(P_r))}{\mathbb{E}_{\mu'}(f|\sigma(P_r))}   \Big)^2 \mathrm{d}\mu \nonumber\\
 &= \frac{1}{M}
 \int \frac{\mathbb{E}_{\mu}(f|\sigma(P_r))(x)}{Z} \int \Big(  \frac{f(P_rx+(I_d-P_r)y) - \mathbb{E}_{\mu'}(f|\sigma(P_r))(x)}{\mathbb{E}_{\mu'}(f|\sigma(P_r))(x)}   \Big)^2  \mu(\mathrm{d}y)  \mu(\mathrm{d}x) \nonumber\\
 &= \frac{1}{M}
 \int \frac{\mathbb{E}_{\mu}(f|\sigma(P_r))}{Z} \Big(  \frac{f  - \mathbb{E}_{\mu'}(f|\sigma(P_r))}{\mathbb{E}_{\mu'}(f|\sigma(P_r))}   \Big)^2  \mathrm{d} \mu' \label{eq:tmp00000001} \\
 &\leq \frac{\kappa^2}{MZ} \int \frac{( f  - \mathbb{E}_{\mu'}(f|\sigma(P_r)) )^2}{\mathbb{E}_{\mu'}(f|\sigma(P_r))}  \mathrm{d} \mu' \label{eq:tmp00000002} \\
 &\leq \frac{\kappa^7}{MZ} \int \|(I_d-P_r^T)\nabla \log f \|_{\Gamma^{-1}}^2 \frac{f^2}{\mathbb{E}_{\mu}(f|\sigma(P_r))} \mathrm{d}\mu  \label{eq:tmp00000003}
\end{align}
Let us detail the previous steps. To get \eqref{eq:tmp00000001} we can use similar arguments as when going from \eqref{eq:tmp9672} to \eqref{eq:tmp096572}, meaning exploiting the $\sigma(P_r)$-measurablility of $\mathbb{E}_{\mu'}(f|\sigma(P_r))$ and of $\mathbb{E}_{\mu}(f|\sigma(P_r))$ and the property \eqref{eq:defMuPrime} of $\mu'$. Going from \eqref{eq:tmp00000001} to \eqref{eq:tmp00000002}, we used relation \eqref{eq:tmpCondExpNutoNuPrime}. Using \eqref{eq:tmp123_6}, we get \eqref{eq:tmp00000003}, which yields \eqref{eq:controlMC_MeanSquaredError} and concludes the proof of Proposition \ref{prop:controlMC}.
\end{proof}

\bibliographystyle{amsplain}

\providecommand{\bysame}{\leavevmode\hbox to3em{\hrulefill}\thinspace}
\providecommand{\MR}{\relax\ifhmode\unskip\space\fi MR }
\providecommand{\MRhref}[2]{%
  \href{http://www.ams.org/mathscinet-getitem?mr=#1}{#2}
}
\providecommand{\href}[2]{#2}

\end{document}